\newtheorem{fed}{Definition}[section]
\newtheorem*{fed*}{Definition}
\newtheorem*{feds*}{Definitions}
\newtheorem{teo}[fed]{Theorem}
\newtheorem*{teo*}{Theorem}
\newtheorem{lem}[fed]{Lemma}
\newtheorem{cor}[fed]{Corollary}
\newtheorem{pro}[fed]{Proposition}
\theoremstyle{definition}
\newtheorem{rem}[fed]{Remark}
\newtheorem*{rems*}{Remarks}
\newtheorem{exa}[fed]{Example}
\newtheorem{nota}[fed]{Notation}
\newtheorem{prob}[fed]{Problem}
\def\coma{\, , \, }
\def\da{^\downarrow}
\def\py{\peso{and}}
\newcommand{\peso}[1]{ \quad \text{ #1 } \quad }
\def\n0{n_{ \text{\rm \tiny o}}}
\def\bce{\begin{center}}
\def\ece{\end{center}}
\def\cD{\mathcal D}
\def\py{\peso{and}}
\def\rk{\text{\rm rk}}
\def\noi{\noindent}
\def\QED{\hfill $\square$}
\def\EOE{\hfill $\triangle$}
\def\bm{\left[\begin{array}}
\def\em{\end{array}\right]}
\def\ben{\begin{enumerate}}
\def\een{\end{enumerate}}
\def\bit{\begin{itemize}}
\def\eit{\end{itemize}}
\def\barr{\begin{array}}
\def\earr{\end{array}}
\def\igdef{\ \stackrel{\mbox{\tiny{def}}}{=}\ }
\def\la{\lambda}
\def\N{\mathbb{N}}
\def\R{\mathbb{R}}
\def\C{\mathbb{C}}
\def\I{\mathbb{I}}
\def\cA{\mathcal{A}}
\def\cC{\mathcal{C}}
\def\cE{\mathcal{E}}
\def\cH{\mathcal{H}}
\def\cK{\mathcal{K}}
\def\cR{{\cal R}}
\def\cS{{\cal S}}
\def\cT{{\cal T}}
\def\cM{{\cal M}}
\def\cN{{\cal N}}
\def\cV{{\cal V}}
\def\cW{{\cal W}}
\def\cX{\mathcal{X}}
\def\cY{\mathcal{Y}}
\def\inc{\subseteq}
\newcommand{\dia}[1] {\text{diag}(#1)}
\def\beq{\begin{equation}}
\def\eeq{\end{equation}}
\def\pausa{\medskip\noi}
\begin{document}

\title{Block subspace expansions for eigenvalues and eigenvectors approximation}
\author{
Francisco Arrieta Zuccalli, Pedro Massey and Demetrio Stojanoff 
\footnote{Partially supported by CONICET
(PICT ANPCyT 1505/15) and  Universidad Nacional de La Plata (UNLP 11X974) 
e-mail addresses:  farrieta@mate.unlp.edu.ar
, massey@mate.unlp.edu.ar , demetrio@mate.unlp.edu.ar  }
\\
{\small Centro de Matem\'atica, FCE-UNLP,  La Plata
and IAM-CONICET, Argentina }
}
\date{}

\maketitle

\begin{abstract}
Let $A\in\C^{n\times n}$ and let $\cX\subset \C^n$ be an $A$-invariant subspace with $\dim \cX=d\geq 1$,
corresponding to exterior eigenvalues of $A$. Given an initial subspace $\cV\subset \C^n$ with $\dim \cV=r\geq d$, we search for expansions of $\cV$ of the form $\cV+A(\cW_0)$, where $\cW_0\subset \cV$ is such that $\dim \cW_0\leq d$ and such that the expanded subspace is closer to $\cX$ than the initial $\cV$. We show that there exist (theoretical) optimal choices of such $\cW_0$, in the sense that $\theta_i(\cX,\cV+A(\cW_0))\leq \theta_i(\cV+A(\cW))$ for every $\cW\subset \cV$ with $\dim \cW\leq d$, where $\theta_i(\cX,\cT)$ denotes the $i$-th principal angle between $\cX$ and $\cT$, for $1\leq i\leq d\leq \dim \cT$. We relate these optimal expansions to block Krylov subspaces generated by $A$ and $\cV$.
We also show that the corresponding iterative sequence of subspaces constructed in this way approximate $\cX$ arbitrarily well, when $A$ is Hermitian and $\cX$ is simple. We further introduce computable versions of this construction and compute several numerical examples that show the performance of the computable algorithms and test 
our convergence analysis. 

\end{abstract}

\noindent  MSC(2000) subject classification: 65F15, 15A18, 65F10.

\noindent Keywords: optimal subspace expansion, eigenvector approximation, block Krylov subspace, projection methods, computable subspace expansion.


\section{Introduction}

The approximation of eigenvalues and eigenvectors of a large complex matrix $A\in\C^{n\times n}$ is a
central topic in numerical linear algebra.
Given an initial search subspace $\cV_0\subset \C^n$, there are several projection methods that allow
to compute approximations of the eigenvalues and eigenvectors of $A$ based on $\cV_0$. This has motivated the development
of different techniques to expand the search subspace 
$\cV_0\subset \cV_1(A,\cV_0)$ in such a way that the enlarged subspace provides better approximations of eigenvalues and eigenvectors of $A$ than the initial $\cV_0$. One popular expansion method is the block Krylov subspace method (for a detailed account on these methods see \cite{Saad,Stew01,vdH02}). In this work we consider (optimal) expansions of the search subspace based on augmentation/deflation techniques that extend the recent results in \cite{Jia22} for the approximation eigenvalues and eigenvectors to the case of simultaneous approximation of the exterior $d$ eigenvalues and eigenvectors of $A$ as follows.

\pausa
Consider a $d$-dimensional subspace $\cX\subset \C^n$ that is invariant for $A\in \C^{n\times n}$. In this case the eigenvalues $\la_1,\ldots,\la_d$ of the restriction of $A$ to $\cX$ are (some of the) eigenvalues of $A$; we further assume that these 
are exterior eigenvalues. 
For example, if $A$ is Hermitian 
we can be interested in computing the smallest eigenvalues of $A$ and their corresponding eigenvectors. On the other hand, if $A=(B^*B)^{1/2}$ is the operator modulus of an arbitrary matrix $B\in \C^{m\times n}$ then we can be interested in computing the largest 
eigenvalues of $A$ and their corresponding eigenvectors, that are the largest singular values and their corresponding right-singular vectors of $B$ (that generate the so-called dominant subspaces of $B$). In these situations, the information contained in $(\cX,\{\la_i\}_{i=1}^d)$ can be used to approximate the action of $A$ as a linear operator (for example, if $A=(B^*B)^{1/2}$ then we can use $(\cX,\{\la_i\}_{i=1}^d)$ to construct optimal low rank approximations of $B$). When $A$ is a large matrix the direct computation
of $(\cX,\{\la_i\}_{i=1}^d)$ can require too much time, or memory space, or it could simply be impractical. Thus, one of the major tasks in numerical linear algebra is to compute efficient and fast approximations of this data.

\pausa
In this setting we are interested in computing approximations  of $\cX$ in terms of iterative algorithms of the following type: given a subspace $\cV\subset \C^n$ such that $d:=\dim \cX\leq r:=\dim \cV$, 
the algorithm expands (in each step)
the search space $\cV$ by considering $\cV+A(\cW_0)$, for some subspace $\cW_0\subset \cV$ with
$\dim \cW_0=d$. We are interested in optimal choices of the expanded subspaces $\cV+A(\cW_0)$ (in each step). Hence, we further impose that $\cV+A(\cW_0)$ lies the closest to $\cX$ among 
subspaces of the form $\cV+A(\cW)$, for $\cW\subset \cV$ with $\dim \cW=d$. 
As a (vector valued) measure of distance between subspaces we consider the so-called principal
angles. Thus, we search for $\cW_0\subset \cV$ with
$\dim \cW_0=d$ and such that 
\begin{equation}\label{eq intro 1}
\Theta(\cX\coma \cV+A(\cW_0))\leq \Theta(\cX\coma \cV+A(\cW)) 
\end{equation}
for every subspace $\cW\subset \cV$ with $\dim \cW=d$. Here
$\Theta(\cX,\cY)$ denotes the diagonal matrix with diagonal entries
given by the principal angles between $\cX$ and $\cY$, denoted $0\leq \theta_1(\cX,\cY)\leq \ldots\leq \theta_k(\cX,\cY)\leq \pi/2$,
where $k=\min\{\dim \cX,\dim\cY\}$. In case $d=1$ i.e., $\cX=\text{span}\{x_1\}$, this problem was originally considered by
Ye \cite{Ye08}. Recently, Jia \cite{Jia22} has extended the results in \cite{Ye08} and 
obtained several fundamental insights related with computable iterative algorithms that provide approximations of the optimal subspace expansion $\cV+A(\text{span}\{w_0\})$. As mentioned in \cite{Jia22,Ye08} even in this particular setting, the choice of $w\in\cV$ has a relevant impact on the proximity between $\cV+\text{span}\{A(w)\}$ and the target subspace $\cX$, when the initial subspace $\cV$ lacks of
structure (which is the typical case for initial subspaces $\cV$ induced by random matrices).

\pausa
In this work we show that 
there exist subspaces $\cW_0$ as in Eq. \eqref{eq intro 1}. 
 Our approach differs from those
in \cite{Jia22,Ye08}. Indeed, we replace the optimization arguments from those works (involved in the computation of principal angles) with 
a rather simple optimization problem: namely, for subspaces $\cS,\, \cX\subset \C^n$ with $\dim\cS\geq \dim \cX=d$ our approach is based on the computation of those subspaces $\cT\subset \cS$ such that $\dim \cT=d$ and $\Theta(\cX,\cS)=\Theta(\cX,\cT)$. As for the case
$d=1$ (see \cite{Jia22,Ye08}), the construction of such $\cW_0$ involves the target subspace 
$\cX$. Thus, although the expansions $\cV+A(\cW_0)$ are optimal, they are not 
computable and hence have no use for the numerical approximation of $\cX$.
Nevertheless, the {\it theoretical} iterative algorithm derived from 
the optimal (block) subspace expansions $\cV+A(\cW_0)$ serves as a reference for the
assessment of the numerical performance of computable iterative algorithms that 
in each step expand the initial subspace $\cV$ by considering 
$\cV+A(\tilde \cW)$, for some computable $\tilde \cW\subset \cV$ with $\dim \tilde \cW\leq d$.
As a first step towards the analysis of this general theoretical algorithm, we consider
an Hermitian matrix $A\in \C^{n\times n}$ and obtain a proximity analysis of 
the (finite) sequence $(\cV_t)_{t=0}^q$ 
of subspaces such that $$\cV_0=\cV\, ,\ \cV_t=\cV_{t-1}\oplus\cN_t\subset \cV_{t-1}+A(\cV_{t-1})
\peso{and} \dim\cN_t\leq d$$
in such a way that $\cN_t$ has an explicit formula in terms of $A$, $\cV_{t-1}$ and $\cX$, and
$$ \Theta(\cX,\cV_t)=\Theta(\cX,\cV_{t-1}+A(\cV_{t-1})) \,, \ \ t\in \I_q\,$$ 
where $\I_q=\{1,\ldots,q\}$. We remark that $\cV_t=\cV_{t-1}+A(\cW_t)$, for some $\cW_t\subset \cV_{t-1}$ with $\dim \cW_t\leq d$, for $t\in\I_q$ (with the notation of the first part of this paragraph, we relax the condition $\dim \cW_0=d$ and consider optimal expansions with $\dim \cW_0\leq d$). In this context we show that if $\|\cdot\|$ is an arbitrary unitarily invariant norm then $\|\tan(\Theta(\cX,\cV_t))\|$ decays as $((\la_{d+1}-\la_n)/(\la_d-\la_n))^t$, where $\la_1\geq \ldots\geq \la_n$ denote the eigenvalues of $A$ counting multiplicities and arranged in non-increasing order.  Our novel perspective on the optimal (block) subspace expansion problem allow us to show that the (computable) block Krylov sequence of subspaces $$\cK_0=\cV \,, \ \cK_t=\cK_{t-1}+A(\cK_{t-1}) \peso{ for } t\in \I_q\ ,$$ is a natural frame (and a source for comparison) for the (theoretical) sequence $\{\cV_t\}_{t=0}^q$. 

\pausa
Once the proximity analysis of the optimal (block) subspace expansion algorithm is obtained in the Hermitian case, similar to \cite{Jia22} we develop some computable versions of the algorithm based on
 the Rayleigh-Ritz or the refined  Rayleigh-Ritz (projection) methods \cite{J94,J97,J98,Jia04,Jia22,SJia01}. 
Thus, we consider derived computable iterative algorithms that produce finite sequences 
$\{\widehat \cV_t\}_{t=0}^q$ with 
$$\widehat \cV_0=\cV\, ,\ \widehat \cV_t=\widehat \cV_{t-1}\oplus\widehat \cN_t \ , \ \ t\in \I_q$$
such that in each step  they expand the search space by 
$\widehat \cN_t$, which is a computable version of the (theoretical) subspace $\cN_t$ considered above.
Since our theoretical approach to the optimal expansion problem differs from previous works, so  do our derived numerical algorithms. We include implementations of these computable algorithms, together with some numerical examples that allow us to compare the different algorithms: block Krylov method, optimal (theoretical) expansions and computable expansions of the search spaces for the approximation of invariant subspaces associated with exterior (largest) eigenvalues of Hermitian matrices. Even when $\cV_t \subset \cK_t$, 
$\dim \cK_t=(r+1)\cdot t$ and $\dim \cV_t=r+d\cdot t$ (in the generic case), for $t\in\I_q$, 
the examples show that in some cases the subspaces $\cV_t$ (and their computable counterparts) provide approximations of $\cX$ that are comparable with those obtained from $\cK_t$. In these cases, the $\cV_t$'s provide good approximations of $\cX$ through subspaces of much lower dimension than the dimensions of the corresponding $\cK_t$'s.
On the other hand, there are numerical examples in which the block Krylov method outperforms 
the theoretical approximations provided by the $\cV_t$'s and (their computable counterparts).

\pausa
 Our work can be framed into 
the design and analysis of (block) iterative search subspace expansion algorithms for the simultaneous approximation of eigenvalues and eigenvectors of large matrices. Notice that the expansion is obtained in terms 
of an augmentation-deflation technique namely $\cV\mapsto \cV+A(\cV)\mapsto \cV+A(\cW_0)$ where $\cV+A(\cW_0)\subset \cV+A(\cV)$, or $\cV\mapsto \cV+A(\cV)\mapsto \cV\oplus\cN$ where $\cV\oplus\cN\subset \cV+A(\cV)$. Hence, our work can be considered within (one-step) restarted block Krylov algorithms for the approximation of eigenvectors and eigenvalues of large matrices.
 There are several works that deal with similar problems. As we have already mentioned, in \cite{Jia22,Ye08} Ye and Jia considered the optimal (theoretical) expansion algorithms together with some of their computable versions, in which in each step the search space increases its dimension by one. In \cite{LeeS07} (see also \cite{Lee07}) Lee and Stewart 
considered a block expansion of the search subspace in terms of residual vectors corresponding to the Rayleigh-Ritz method. In \cite{WZ14}, this approach was modified by considering residual vectors corresponding to the refined Rayleigh-Ritz method, introduced by Jia in \cite{J97}. Another popular iterative method for the approximation of interior eigenvalues and eigenvectors based on the expansion of the search subspace is the Jacobi-Davidson method, introduced in \cite{Slvd96,Slvd00} (see also \cite{Hochs01}). All these methods are constructed in such a way that the expanded search space provides better approximations of eigenvalues and eigenvectors of matrices. Most of these methods can be implemented using specific (mathematical) software. While our approach is based on a theoretical (i.e. not computable) algorithm, we show that its numerical computable versions (induced by simple projection methods) also provide 
good numerical approximations with the advantage of controlling the size of the stored information in each step (through the control of the dimension of the expanded search space). Furthermore, our computable approach can be extended to deal with the approximation of interior eigenvalues and eigenvectors, by applying different projections methods (i.e. harmonic Rayleigh-Ritz,  refined harmonic Rayleigh-Ritz and shift-invert  methods, see \cite{Jia05,JiaLi14,JiaLi15}).

\pausa
The paper is organized as follows. We include some brief preliminaries in Section \ref{sec prelis}. 
In Section \ref{sec main results general} we present our main results and delay their proofs until Section \ref{sec proofs main}. In more detail, in Section \ref{sec main1} we 
obtain a complete solution of the optimal subspace expansion problem. We further
propose a variation of the optimal expansion problem and describe an explicit algorithmic construction of solutions to this problem (in terms of closed formulas involving the target subspace). We show that the proposed algorithm constructs a sequence of increasing subspaces 
that are related to some block Krylov subspaces.
In Section \ref{sec main2} we obtain a proximity 
analysis of the algorithmic construction of the optimal subspace expansions 
together with a proximity analysis of associated block Krylov subspaces in the Hermitian case. In Section 
\ref{sec compu subop} we follow recent ideas of Jia \cite{Jia22} and consider algorithms that 
construct computable expansions of subspaces based on projection methods.
In Section \ref{sec proofs main} we present the proofs of our main results, based on some technical results.
We defer the proof of these technical results to Section \ref{sec append complet}.
Finally, in Section \ref{sec num exa} we include the analysis of several numerical examples 
obtained by implementation the Algorithms considered in Sections \ref{sec main2} and \ref{sec compu subop}. 

\section{Preliminaries} \label{sec prelis}

\pausa
{\bf Notation and terminology}. We let $\C^{m\times n}$ be the space of complex $m\times n$ matrices with entries in $\C$. A norm $\|\cdot \|$ in $\C^{n\times n}$ is unitarily invariant (briefly u.i.n.) if 
$\|UAV\|=\|A\|$, for every $A\in \C^{n\times n}$ and unitary matrices $U,\, V\in \C^{n\times n}$.
For example, the operator and Frobenius norms, denoted by $\|\cdot\|_2$ and $\|\cdot \|_F$ respectively, are u.i.n.'s.  If $\cV\subset \C^n$ is a subspace, we let 
$P_\cV\in\C^{n\times n}$ denote the orthogonal projection onto $\cV$.

\pausa
For $A\in\C^{m\times n}$ we will denote it's Moore–Penrose pseudo-inverse as $A^\dagger$ . Among other basic properties of the pseudo-inverse we will use the fact that $AA^\dagger=P_{R(A)}$ and $A^\dagger A=P_{\ker(A)^\perp}=P_{R(A^*)}$ where $R(A)$ denotes the subspace of $\C^m$ spanned by the columns of $A$.

\pausa
For $n\in\N$, let $\I_n=\{1,\ldots,n\}$. 
Given a vector $x\in\C^n$ we denote by $\text{diag}(x)\in \C^{n\times n}$ the diagonal matrix whose main diagonal is $x$.
If  $x=(x_i)_{i\in\I_n}\in\R^n$ we denote by $x\da=(x_i\da)_{i\in\I_n}$ the vector obtained by 
rearranging the entries of $x$ in non-increasing order. We also use the notation
$(\R^n)\da=\{x\in\R^n\ :\ x=x\da \}$ and $(\R_{\geq 0}^n)\da=
\{x\in\R_{\geq 0}^n\ :\ x=x\da \}$. 

\pausa
 Given an Hermitian matrix $A\in \C^{n\times n}$ 
we denote by $\la(A)=(\la_i(A))_{i\in\I_n}\in (\R^n)\da$ 
the eigenvalues of $A$ counting multiplicities and arranged in 
non-increasing order.   
For $B\in \C^{m\times n}$ we let $s(B)=\la(|B|)\in (\R_{\geq 0}^n)\da$ denote the singular values of $B$, i.e. the eigenvalues of the positive semi-definite matrix $|B|=(B^*B)^{1/2}\in\C^{n\times n}$. 

\pausa
{\bf Principal angles between subspaces}. 
Let $\cS,\,\cX\subset \C^n$ be subspaces such that $d=\dim \cX\leq \dim \cS$.
Recall that the principal angles between $\cX$ and $\cS$, denoted $\theta_1,\ldots,\theta_d\in [0,\pi/2]$
are given by $$\cos(\theta_i)=s_i(P_\cS P_\cX)=s_i(P_\cX P_\cS) \peso{for} 1\leq i\leq d\,.$$ Notice that by construction 
$0\leq \theta_1\leq \ldots\leq \theta_d\leq \pi/2$.
We let 
$$
\Theta(\cX,\,\cS)= \dia{\theta_1,\ldots,\theta_d}\, .
$$ Principal angles allow to define metrics between 
subspaces \cite{CWL}, which are use in a wide range of applications (see \cite[Section 1]{CWL} and the references therein). 
In order to save notation, whenever we have matrices $X$ and $V$ with the same number of rows we will write $\Theta(X,V)$ instead of $\Theta(R(X),R(V))$. \EOE

\section{Main results}\label{sec main results general}

In this section we present our main results. In Section \ref{sec main1} we solve the optimal (block) subspace expansion problem (for details see below) and find an alternative (relaxed) formulation for an optimal expanded search space. This alternative formulation induces an iterative process that we describe in terms of a pseudo-code in Algorithm \ref{algo1}. We compare the sequence of subspaces that are the outputs of Algorithm \ref{algo1} with the sequence of block Krylov spaces, where both sequences are induced by a matrix $A\in \C^{n\times n}$ and an initial subspace $\cV\subset \C^n$. In Section \ref{sec main2} we obtain a proximity analysis of the output of Algorithm \ref{algo1} and of the sequence of block Krylov spaces, to the eigenspaces of a Hermitian matrix $A$ associated to its exterior (largest) eigenvalues. In Section \ref{sec compu subop} we follow ideas from \cite{Jia22} and describe pseudo-codes in Algorithm \ref{algo2} for the iterative construction of computable (sub)optimal (block) subspace expansions. In Section \ref{sec num exa} we include several numerical examples related to the algorithms considered in Sections \ref{sec main2} and \ref{sec compu subop}.

\subsection{Optimal (block) subspace expansion problem}\label{sec main1}

Our first results are related to the optimal (block) subspace expansion problem.
Hence, we fix a $d$-dimensional subspace $\cX\subset \C^n$ that is invariant for $A\in \C^{n\times n}$. 
We consider an initial (guess) subspace $\cV\subset\C^n$ such that $r=\dim\cV \geq d$. In this setting, 
we search for the optimal subspaces $\cW_0\subset \cV$ with $\dim \cW_0=d$ and such that 
\begin{equation}\label{eq main1}
\Theta(\cX\coma \cV+A(\cW_0))\leq \Theta(\cX\coma \cV+A(\cW)) 
\end{equation} for every subspace $\cW\subset \cV$ with $\dim \cW=d$. 
If $\cW\subset \cV$ with $\dim \cW=d$, let $\cE= \cV+A(\cW)$ and notice that 
$$\cE\subset \cS:=\cV+A(\cV) \peso{is such that} \cV\subset \cE \py \dim \cE\leq r+d\,. $$
Thus, as a first step towards the solution of \eqref{eq main1}, we can consider the following relaxation of the
problem in Eq. \eqref{eq main1}: find $\cE_0\subset \cS$ such that $\cV\subset \cE_0$, $\dim \cE_0\leq r+d$ and such that 
\begin{equation}\label{eq main2}
\Theta(\cX\coma \cE_0)\leq \Theta(\cX\coma \cE) 
\end{equation} for every such $\cE$ as above. The advantage of considering the problem in Eq. \eqref{eq main2} is 
that its solution can be described in simple terms: indeed, if we let $\cM=P_\cS(\cX)\subset \cS$ then, every
$\cV+\cM\subset \cE_0\subset \cS$ with $\dim \cE_0\leq r+d$ satisfies Eq. \eqref{eq main2}. Furthermore, using the fundamental identity 
\begin{equation}\label{eq main3}
\cS=\cV+A(\cV)= \cV\oplus (1-P_\cV) A(\cV)
\end{equation}
we can easily construct $\cE_0$ as above such that $\cE_0=\cV+A(\cW_0)$, for some $\cW_0\subset \cV$ with
$\dim \cW_0=d$ (for the proofs of both of these assertions, see Section \ref{sec proofs main}). Notice that, since $\cE_0$ is a solution to the relaxed problem, such $\cW_0$ satisfies Eq. \eqref{eq main1} and hence, it is a solution to the original optimal subspace expansion problem. To describe the main result of this section we include the following: 
\begin{nota}\label{notac1} In what follows we consider:
\begin{enumerate}
\item $A\in \C^{n\times n}$  and an $A$-invariant subspace $\cX\inc \C^n$, with $\dim \cX=d$.
\item A subspace $\cV\subset \C^n$  with $\dim \cV :=r\geq d$, $\cX\not\subseteq\cV$ and 
$A(\cV)\not\subseteq \cV$.
\item Let $V\in \C^{n\times r}$ have orthonormal columns such that $R(V)= \cV$ and set
$$
R:=AV-V(V^*AV)=(1-P_\cV)AV\in\C^{n\times r}\,.
$$ \EOE
\end{enumerate}
\end{nota}

\begin{teo}\label{teo opti 1} Consider Notation \ref{notac1}.
Denote by $\cS := \cV + A(\cV)$. Then,
\begin{enumerate}
\item[1.] There exist  
subspaces $\cW_0 \inc \cV$ with $\dim \cW_0 = d$ such that
\beq\label{los W0}
\Theta(\cX\coma\cV+A(\cW_{0})) =\Theta(\cX\coma\cS) = \min \{ \Theta(\cX\coma\cV+A(\cW)):  
\cW\inc \cV \coma \dim \cW=d\} \ .
\eeq
\end{enumerate}
If we let $\cN := (1-P_\cV)(P_\cS(\cX)\,)$  
 and 
$\cC_{\rm op} = R^\dagger (\, \cN\,) \inc \ker R^\perp \inc\C^r  \,$ then
\begin{enumerate}
\item[2.]  $\cC_{\rm op}=R^\dagger \cX$ , $\cN=RR^\dagger \cX=R(\cC_{\rm op})$, and $\cS=\cV \oplus R(\C^r)$.
\item[3.]
The subspaces $\cW_0$ which satisfy Eq. \eqref{los W0} are exactly those of the form 
\beq\label{el W0}
\cW_0 = V(\cC_0) \peso{for some \ \ $\cC_0\inc \C^r $ \ \ 
with  \ \ $\dim \cC_0 = d$ \ \ and} \cC_{\rm op} \inc  P_{\ker R^\perp} \cC_0 \ .
\eeq
In particular, $\cV\oplus \cN=\cV\oplus R(\cC_{\rm op})\subset \cV\oplus R(\cC_0)= \cV\oplus (1-P_\cV)A(\cW_0)= \cV+A(\cW_0)$.
\item[4.] If $\cV\cap A^{-1}(\cV)=\{0\}$ and $\dim \cN 
= d$, 
then $\cW_0$ as in Eq. \eqref{los W0} is unique and given by 
$$ \cW_0 =  V (\cC_{\rm op}) = V \, R^\dagger (\cX) \peso{with} \cV+A(\cW_0)=\cV\oplus \cN\,.$$
\end{enumerate}
\end{teo}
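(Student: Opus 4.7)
The plan is to first solve a relaxed problem — minimizing $\Theta(\cX,\cE)$ over all $\cE$ with $\cV\subset\cE\subset\cS$ and $\dim\cE\leq r+d$ — and then recover an optimal $\cW_0$ from the solution. I would begin by observing that for any such $\cE$ the relation $P_\cE=P_\cE P_\cS$ gives
\[
\cos\theta_i(\cX,\cE)=s_i(P_\cE P_\cX)=s_i(P_\cE P_\cS P_\cX)\leq s_i(P_\cS P_\cX)=\cos\theta_i(\cX,\cS),
\]
so $\Theta(\cX,\cE)\geq\Theta(\cX,\cS)$ componentwise, with equality exactly when $\cE\supset R(P_\cS P_\cX)=P_\cS(\cX)$. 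Since every $\cV+A(\cW)$ with $\dim\cW\leq d$ meets the constraints of the relaxation, Eq.~\eqref{los W0} reduces to the requirement $P_\cS(\cX)\subset\cV+A(\cW_0)$.

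Next I would establish item~2 via the orthogonal decomposition $\cS=\cV\oplus R(\C^r)$: because $R(\C^r)=(1-P_\cV)A(\cV)\subset\cV^\perp$, the direct sum is orthogonal, so $P_\cS=P_\cV+P_{R(\C^r)}$. Combined with $P_\cV P_\cS=P_\cV$ (since $\cV\subset\cS$), this yields $(1-P_\cV)P_\cS=P_{R(\C^r)}$, hence
\[
\cN=(1-P_\cV)P_\cS(\cX)=P_{R(\C^r)}(\cX)=RR^\dagger\,\cX.
\]
Applying $R^\dagger$ on the left and using $R^\dagger RR^\dagger=R^\dagger$ then gives $\cC_{\rm op}=R^\dagger\cN=R^\dagger\cX$, while $\cS=\cV\oplus R(\C^r)$ is the displayed decomposition itself.

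For items~1 and~3, I would note that every $\cE_0$ with $\cV\subset\cE_0\subset\cS$ has the form $\cV\oplus R(\cC_0)$ for some $\cC_0\subset\C^r$. The splitting $AV=P_\cV AV+R$ immediately gives $\cV+A(V(\cC_0))=\cV\oplus R(\cC_0)=\cE_0$; since $V$ has orthonormal columns, $\dim V(\cC_0)=\dim\cC_0$, so the choice $\cW_0=V(\cC_0)$ with $\dim\cC_0=d$ produces a candidate with $\dim\cW_0=d$. The optimality criterion $P_\cS(\cX)\subset\cE_0$ rewrites as $\cN\subset R(\cC_0)=R(P_{\ker R^\perp}\cC_0)$, and the injectivity of $R$ on $\ker R^\perp$ (together with $\cC_{\rm op}\subset\ker R^\perp$) converts this into the advertised condition $\cC_{\rm op}\subset P_{\ker R^\perp}(\cC_0)$. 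Existence in item~1 then follows because $\dim\cC_{\rm op}\leq d\leq r$ allows $\cC_{\rm op}$ to be extended to a $d$-dimensional subspace of $\C^r$.

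Finally, for item~4, the hypothesis $\cV\cap A^{-1}(\cV)=\{0\}$ forces $\ker R=\{0\}$: if $Rv=0$ then $AVv\in\cV$, so $Vv\in\cV\cap A^{-1}(\cV)=\{0\}$, hence $v=0$ by injectivity of $V$. Thus $P_{\ker R^\perp}=I_r$ and the condition collapses to $\cC_{\rm op}\subset\cC_0$; if moreover $\dim\cN=d$, then $\dim\cC_{\rm op}=d$ (since $R^\dagger$ restricts to a bijection from $R(R)$ onto $\ker R^\perp$), which together with $\dim\cC_0=d$ forces $\cC_0=\cC_{\rm op}$ and yields the unique $\cW_0=VR^\dagger\cX$. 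The main obstacle I anticipate is the bookkeeping in item~3 — correctly translating $\cN\subset R(\cC_0)$ into a constraint on $P_{\ker R^\perp}(\cC_0)$ rather than on $\cC_0$ itself, which is where the presence of $\ker R$ matters; everything else is organized around the orthogonal decomposition $\cS=\cV\oplus R(\C^r)$ and standard Moore--Penrose identities.
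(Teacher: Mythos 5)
Your proposal is correct and follows essentially the same route as the paper's proof: relax to intermediate subspaces $\cV\subset\cE\subset\cS$, use the orthogonal decomposition $\cS=\cV\oplus R(\C^r)$ to obtain $P_\cS=P_\cV+RR^\dagger$ and hence $\cN=RR^\dagger\cX$, and translate the containment $\cN\subset R(\cC_0)$ into $\cC_{\rm op}\subset P_{\ker R^\perp}(\cC_0)$ via the injectivity of $R$ on $\ker R^\perp$. The only step you assert rather than prove is the converse direction of your ``equality exactly when'' claim (this is the paper's Proposition \ref{pro monotonia de angulos}, item 3, proved there through the equality case of the interlacing inequalities); in your formulation it follows in one line from $\|P_\cS P_\cX\|_F^2=\|P_\cE P_\cS P_\cX\|_F^2+\|(1-P_\cE)P_\cS P_\cX\|_F^2$, so equality of all the singular values forces $(1-P_\cE)P_\cS P_\cX=0$.
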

\proof See Section \ref{sec proofs main}. \QED

\begin{rem} We show how to deduce \cite[Theorem 2.2]{Jia22} from our previous result. Indeed,
consider the notation from Theorem \ref{teo opti 1} and assume that $d=1$. 
Hence, $\cX=\C\cdot x$, for some $x\in\C^n$ such that $Ax=\la\,x$.
Assume further (as in \cite[Theorem 2.2]{Jia22}) that $x\notin \cV$, $\cV\cap A^{-1}(\cV)=\{0\}$ and that $RR^\dagger x\neq 0$ (i.e. $\dim\cN=1=d$). 

\pausa
By item 4. in Theorem \ref{teo opti 1} we get that the unique optimal subspace $\cW_0$ is spanned by 
$w_0=VR^\dagger x$ and that $\cV_{\rm op}:=\cV+\C\cdot A(w_0)=\cV\oplus \C \cdot RR^\dagger x$. This last fact implies that $(1-P_\cV)A(w_0)\in\C\cdot RR^\dagger x$. By items 1. and 2. in Theorem \ref{teo opti 1} we get that $$\angle(\cV_{\rm op},x)=\angle (\cS,x)=\angle (P_{\cS}x,x)=\angle (P_{\cV}x+RR^\dagger x,x)$$
Moreover, our approach shows that the auxiliary subspace $\cV_R=\cV\oplus R(\C^r)$ considered in \cite{Jia22} coincides with $\cV_R=\cS \,(=\cV+A(\cV))$ (see item 2. in Theorem \ref{teo opti 1}). Indeed, we have exploited this last fact by introducing (in the general case $d\geq 1$) a natural comparison between the subspaces $\cV_{\rm op}$ and the block Krylov subspaces $\cS=\cV+A(\cV)$ throughout our work (see Remark \ref{rem sobre algo1} below). \EOE
\end{rem}

\pausa
Consider the notation from Theorem \ref{teo opti 1}. As pointed out by Jia in \cite{Jia22} in case $d=1$, it is convenient to focus on the optimal expanded space $\cV+A(\cW_0)\subset \C^n$ rather than the subspace $\cW_0\subset \cV$. After all, we are interested in the optimal search space $\cV+A(\cW_0)$
which contains $d$-dimensional subspaces that are the closest to $\cX$, among all such subspaces. Furthermore, if we are willing to 
relax the problem and consider optimal search subspaces $\cV+A(\cW')$ for subspaces $\cW'\subset \cV$ with 
$\dim \cW'\leq d$ (instead of asking that $\dim\cW'=d$), then we can obtain simple procedures to compute such optimal subspaces. The following result, that describes optimal search subspaces $\cV+A(\cW')$ as above, will play a key role in the rest of our work. 

\begin{cor}\label{coro para aplic}
Consider Notation \ref{notac1}. Let $\cS=\cV+A(\cV)$ and $\cN = (1-P_\cV)(P_\cS(\cX)\,)$ as in Theorem \ref{teo opti 1}. Then, 
\begin{enumerate}
\item $\cV\oplus \cN=\cV+P_\cS(\cX)\inc \cS$, $\dim \cN\leq d$ so $\dim \cV\oplus \cN\leq r+d$;
\item If we let $\cW '=VR^\dagger (\cN) \subset \cV$ then $\dim\cW'\leq d$ and  $$\cN=(1-P_\cV)A(\cW')
\implies \cV\oplus \cN=\cV+A(\cW')\,.$$ 
\item 
In the general case, $\Theta(\cX\coma\cV\oplus\cN) = \Theta(\cX\coma\cS)$. Hence, $$ \Theta(\cX\coma\cV\oplus\cN)=\Theta(\cX\coma\cV+A(\cW'))= \min \{ \Theta(\cX\coma\cV+A(\cW)):  
\cW \subset \cV \coma \dim \cW\leq d\} \ .$$
\item \label{cor it4}If we assume further that $\dim \cN=d$ then, for every $\cW_0\subset \cV$ satisfying Eq. \eqref{los W0} we have that $$\cV+ A(\cW_0)=\cV\oplus\cN\,.$$
\end{enumerate}
\end{cor}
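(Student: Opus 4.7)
My plan is to exploit two ingredients repeatedly: the fundamental identity $\cS = \cV \oplus (1-P_\cV)A(\cV)$ from Eq. \eqref{eq main3} and the structural description of optimal expansions in Theorem \ref{teo opti 1}. The central observation is that for any subspace $\cT \inc \cS$ one has $\cV + \cT = \cV \oplus (1-P_\cV)\cT$, since $(1-P_\cV)\cT \inc \cV^\perp$.

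For item 1, applying this observation with $\cT = P_\cS(\cX)$ gives $\cV + P_\cS(\cX) = \cV \oplus (1-P_\cV)P_\cS(\cX) = \cV \oplus \cN$, and the dimension bound follows from $\dim \cN \leq \dim P_\cS(\cX) \leq \dim \cX = d$. For item 2, Theorem \ref{teo opti 1} item 2 gives $\cN = RR^\dagger \cX$ and $\cC_{\rm op} = R^\dagger \cX$; together with the Moore--Penrose identity $R^\dagger R R^\dagger = R^\dagger$ and $\cN \inc R(R)$, this yields $R^\dagger \cN = \cC_{\rm op}$, so $\cW' = V(\cC_{\rm op})$ has dimension $\leq d$ (since $V$ has orthonormal columns). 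Then $(1-P_\cV)A(\cW') = R(\cC_{\rm op}) = R R^\dagger \cN = \cN$, which establishes item 2.

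The key content is in item 3. I will first establish a projection identity: \emph{if $P_\cS(\cX) \inc \cT \inc \cS$, then $P_\cT P_\cX = P_\cS P_\cX$}. Indeed, for $x \in \cX$, $P_\cS x \in P_\cS(\cX) \inc \cT$ forces $P_\cT P_\cS x = P_\cS x$, while $x - P_\cS x \in \cS^\perp \inc \cT^\perp$ forces $P_\cT(x - P_\cS x) = 0$; summing gives $P_\cT x = P_\cS x$. Since the cosines of the principal angles between $\cX$ and $\cT$ are the singular values of $P_\cT P_\cX$, this implies $\Theta(\cX, \cT) = \Theta(\cX, \cS)$. Applying this with $\cT = \cV \oplus \cN$ (which equals $\cV + P_\cS(\cX)$ by item 1) and combining with item 2 proves the two claimed equalities of item 3. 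The minimization statement then follows from Theorem \ref{teo opti 1} item 1, which identifies $\Theta(\cX, \cS)$ as the minimum over $\dim \cW = d$, together with the elementary remark that any $\cW$ with $\dim \cW \leq d$ embeds into some $\widehat \cW$ of dimension $d$, and enlarging $\cW$ can only make $\cV + A(\cW)$ larger and hence decrease the principal angles componentwise.

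For item 4, when $\dim \cN = d$ we have $\dim(\cV \oplus \cN) = r+d$. Theorem \ref{teo opti 1} item 3 gives $\cV \oplus \cN \inc \cV + A(\cW_0)$ for any $\cW_0$ satisfying Eq. \eqref{los W0}, while $\dim(\cV + A(\cW_0)) = r + \dim (1-P_\cV)A(\cW_0) \leq r + \dim \cW_0 = r + d$. Hence the inclusion is an equality. The only conceptually substantive step is the projection identity of item 3; the remaining items reduce to short dimension counts once Theorem \ref{teo opti 1} is in hand.
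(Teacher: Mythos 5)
Your proof is correct and follows essentially the same route as the paper: the decomposition $\cV+\cT=\cV\oplus(1-P_\cV)\cT$ for $\cT\inc\cS$ (the paper's Proposition \ref{rem desc sum y algo mas}), the formulas $\cN=RR^\dagger\cX$ and $\cC_{\rm op}=R^\dagger\cX$ from Theorem \ref{teo opti 1} for item 2, and the dimension count $\dim(\cV\oplus\cN)=r+d\geq\dim(\cV+A(\cW_0))$ for item 4. The only difference is cosmetic: where the paper cites Proposition \ref{pro monotonia de angulos} (whose relevant implication is proved via the equality case of interlacing), you derive the angle equality directly from the identity $P_\cT P_\cX=P_\cS P_\cX$ for $P_\cS(\cX)\inc\cT\inc\cS$, which is a clean, self-contained version of the same observation the paper already makes for $\cT=P_\cS(\cX)$ inside the proof of that proposition.
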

\proof See Section \ref{sec proofs main}. \QED

\pausa
Notice that Corollary \ref{coro para aplic} provides a simple construction (with explicit formulas in terms of $A$, $\cX$ and $\cV$) of an optimal expanded search subspace $\cV\oplus \cN$  
(compare item 1. in Theorem \ref{teo opti 1} with
item 3. in Corollary \ref{coro para aplic}) with the additional property that $\cV$ and $\cN$ are mutually orthogonal. We can iterate this construction and obtain an algorithmic procedure that constructs sequences of step-wise optimal expanded search spaces as follows.

\begin{algorithm}
\caption{Optimal theoretical (block) subspace expansion}\label{algo1}
\centerline{
}
\begin{algorithmic}[1]
\REQUIRE $A\in\C^{n\times n}$, $\cX,\,\cV\subset \C^n$ as in Notation \ref{notac1}; an integer $q\geq 0$ (number of iterations).

\medskip

\STATE Set $\cV_0=\cV$. For $1\leq t\leq q$ define recursively: 
\begin{enumerate}
\item[{\small 1.1:}] $\cS_t=\cV_{t-1}+A(\cV_{t-1}) $;
\item[{\small 1.2:}]  $\cN_t=(1-P_{\cV_{t-1}})P_{\cS_t}(\cX)$; 
\item[{\small 1.3:}] $\cV_t=\cV_{t-1}\oplus \cN_t$. 
\end{enumerate}
\STATE{\bf Return:} Subspaces $\cV=\cV_0\subset \ldots\subset \cV_q\subset \C^n$, such that 
$\Theta(\cX,\cS_t)=\Theta(\cX,\cV_t)$, for $t\in\I_q$.
\end{algorithmic}
\end{algorithm}

\begin{rem}[Algorithm \ref{algo1} vs. block Krylov subspace method]\label{rem sobre algo1}
Consider Notation \ref{notac1} and fix $q\geq 0$. Let $\cV_0\subset \ldots\subset \cV_q\subset \C^n$ constructed by Algorithm \ref{algo1}. Notice that given $\cV_{t-1}$ then $\cV_t$ is the optimal subspace expansion of $\cV_{t-1}$ considered in Corollary \ref{coro para aplic}, for $1\leq t\leq q$.
By construction, $\cV_0=\cV$ and for $1\leq t\leq q$ we get that
$$
\cV_t=\cV_{t-1}\oplus \cN_t =\cV_{t-1}+P_{\cS_t}(\cX)\subset \cV_{t-1}+A(\cV_{t-1})=\cS_t\,.
$$
Consider the increasing family of block Krylov subspaces $\cK_0\subset \ldots\subset \cK_q\subset \C^n$ constructed from $A$ and $\cV$. Recall that $\cK_0=\cV$ and 
for $1\leq t\leq q$, 
$$\cK_t=A^t (\cV) +  A^{t-1}(\cV) + \cdots + A(\cV) + \cV\,.$$  Alternatively, we have that $\cK_t=\cK_{t-1}+A(\cK_{t-1})$, for $1\leq t\leq q$.
It follows that $\cV_t\subset \cK_t$, for $0\leq t\leq q$. Indeed, $\cV_0=\cK_0$ and if we assume that for some $1\leq t\leq q$ we have that $\cV_{t-1}\subset \cK_{t-1}$, then 
$$
\cV_t\subset \cV_{t-1}+A(\cV_{t-1})\subset \cK_{t-1}+A(\cK_{t-1})=\cK_t\,.
$$ 
Moreover, by construction $\dim \cV_t\leq r+d\cdot t$ while $\dim \cK_t\leq r\cdot (t+1)$, for $0\leq t\leq q$. Hence, we can expect the subspaces $\cV_t$ to be of a (much) lower dimension than the corresponding $\cK_t$, for $t\geq 1$. We point out that in the numerical examples considered in Section \ref{sec num exa} we observed that $\dim \cN_t=d$ and hence $\dim \cV_t= r+d\cdot t$, while $\dim \cK_t= r\cdot (t+1)$, for $0\leq t\leq q$. In particular, in each step the Algorithm \ref{algo1} constructed the optimal 
subspace expansion for $A$ and the corresponding $\cV_t$ (see item \ref{cor it4} in  Corollary \ref{coro para aplic}).
\EOE
\end{rem}

\pausa
Consider the notation in Remark \ref{rem sobre algo1}. There is a fundamental difference
 between the finite sequences $(\cV_t)_{t=0}^q$ and $(\cK_t)_{t=0}^q$ namely, that 
the latter is a computable sequence of subspaces that do not depend on the target subspace $\cX$. 
On the other hand, the family $(\cV_t)_{t=0}^q$ can not be computed unless the target 
(typically unknown) subspace $\cX$ is known; hence, from the numerical point of view, $(\cV_t)_{t=0}^q$
does not provide a computable family of approximating subspaces for $\cX$.
Nevertheless, from a theoretical point of view, it is natural to ask under which conditions 
the family $(\cV_t)_{t=0}^q$ approximates $\cX$.

\begin{prob}\label{prob1}
Consider Notation \ref{notac1}. Let $(\cV_t)_{t=0}^q$ be obtained from Algorithm \ref{algo1}. Find conditions (on $A$, $\cX$ and $\cV$) under which we can obtain explicit upper bounds for the decay of 
the principal angles $\Theta(\cX,\cV_t)$, for $t\geq 1$. 
\EOE 
\end{prob}

\pausa
Problem \ref{prob1} in its full generality seems to be hard. Notice that even when $A$ is diagonalizable, (the spectral) representations of $A$ would involve decompositions of the identity 
$Q_1+\ldots+Q_r=I$ in terms of (possibly non-orthogonal) projections such that $Q_iQ_j=0$ when $i\neq j$. Analysis of these situations is typically subtle. As a first step towards a better understanding of solutions of Problem \ref{prob1} we restrict attention to the Hermitian case $A=A^*$ and assume further that $\cX=\text{span}\{x_1,\ldots, x_d\}$ is the subspace spanned by the eigenvectors of $A$ corresponding to the $d$ largest eigenvalues of $A$. We focus on the largest eigenvalues of $A$ only since the smallest eigenvalues of $A$ can be thought of as the largest eigenvalues of $-A$, which is also Hermitian.

\subsection{Analysis of Algorithm \ref{algo1}: exterior spectra in the Hermitian case}\label{sec main2}

In this section we obtain an analysis of Algorithm \ref{algo1} in the case that $A\in \C^{n\times n}$
is an Hermitian matrix and that $\cX=\text{span}\{x_1,\ldots, x_d\}$ is the subspace spanned by the
eigenvectors of $A$ corresponding to the $d$ largest eigenvalues of $A$. Indeed, let $\{\cV_t\}_{t=0}^q$
be constructed as in Algorithm \ref{algo1} with an initial (guess) subspace $\cV$ and let $\la(A)=(\la_i)_{i\in\I_d}\in (\R^n)\da$ denote the eigenvalue list of $A$. If 
we assume that $\la_d>\la_{d+1}$ (which is a generic case) then we obtain an upper bound for $\Theta(\cX,\cV_t)$ that becomes arbitrarily small for large enough $t\geq 1$.

\pausa
To describe the main results of this section we include the following 

\begin{nota}\label{notac22} In what follows we consider:
\begin{enumerate}
\item An Hermitian matrix $A\in \C^{n\times n}$ with eigenvalues list $(\la_i)_{i\in\I_n}\in (\R^n)\da$. 
\item An orthonormal basis $\{x_i\}_{i\in\I_n}$ such that $A\,x_i=\la_i\, x_i$, for $i\in\I_n$.
\item The invariant subspace $\cX=\text{span}\{x_1,\ldots,x_d\}\subset \C^n$ and assume that $\la_d>\la_{d+1}$.
\item A subspace $\cV\subset \C^n$  with $\dim\cV :=r\geq d$, $\cX\not\subseteq\cV$ and 
$A(\cV)\not\subseteq \cV$.\EOE
\end{enumerate}
\end{nota}

\begin{teo}\label{teo main conv vt} 
Consider Notation \ref{notac22}
and let $\{\cV_t\}_{t\in \I_q}$ be constructed according to Algorithm \ref{algo1}. If $\cX\cap\cV^\bot=\{0\}$ then for every unitarily invariant norm $\|\cdot\|$ we have
\begin{equation}\label{eq hay conv1}
    \|\tan(\Theta(\cX,\,\cV_t))\|
    \leq
    \left(\frac{\lambda_{d+1}-\lambda_n}{(\lambda_d-\lambda_n)+(\lambda_d-\lambda_{d+1})}\right)^{t}
    \| \tan(\Theta(\cX, \cV_{0}))\|\, 
    \text{,  for } t\geq 1\,.
 \end{equation}
\end{teo}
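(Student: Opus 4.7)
My plan is to prove the bound by induction on $t$, reducing to a single-step decay estimate. Theorem \ref{teo opti 1} combined with Corollary \ref{coro para aplic} ensures that at every step of Algorithm \ref{algo1} one has $\Theta(\cX,\cV_t)=\Theta(\cX,\cS_t)$ with $\cS_t=\cV_{t-1}+A(\cV_{t-1})$, and the hypothesis $\cX\cap\cV^\perp=\{0\}$ propagates automatically to each $\cV_{t-1}$ since $\cV\subset\cV_{t-1}$ forces $\cV_{t-1}^\perp\subset\cV^\perp$. So it suffices to establish the generic one-step inequality
\[
 \|\tan(\Theta(\cX,\cV+A(\cV)))\|\leq c\,\|\tan(\Theta(\cX,\cV))\|,\qquad c:=\frac{\lambda_{d+1}-\lambda_n}{(\lambda_d-\lambda_n)+(\lambda_d-\lambda_{d+1})},
\]
for every $\cV$ with $\cX\cap\cV^\perp=\{0\}$.

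\textbf{Coordinates and tangent formula.} I would work in the orthonormal eigenbasis $\{x_i\}_{i\in\I_n}$, so that $\cX$ is the first coordinate subspace and $A=\text{diag}(D_1,D_2)$ with $D_1=\text{diag}(\lambda_1,\ldots,\lambda_d)$, $D_2=\text{diag}(\lambda_{d+1},\ldots,\lambda_n)$. Take an orthonormal basis $V\in\C^{n\times r}$ of $\cV$ and split it as $V=\begin{pmatrix}V_1\\ V_2\end{pmatrix}$ with $V_1\in\C^{d\times r}$ and $V_2\in\C^{(n-d)\times r}$. The hypothesis $\cX\cap\cV^\perp=\{0\}$ is equivalent to $V_1$ having full row rank $d$, and then a short calculation using $V_1^*V_1+V_2^*V_2=I_r$ shows that $\cos(\Theta(\cX,\cV))=s(V_1)$ and hence $\|\tan(\Theta(\cX,\cV))\|=\|V_2 V_1^\dagger\|$ for every u.i.n.

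\textbf{The key auxiliary subspace.} The heart of the argument is to produce, for a shift $\mu\in(\lambda_n,\lambda_d)$ to be chosen, the matrix
\[
 W:=(A-\mu I)V\cdot V_1^\dagger(D_1-\mu I)^{-1}=\begin{pmatrix}I_d\\ (D_2-\mu I)V_2V_1^\dagger(D_1-\mu I)^{-1}\end{pmatrix}\in\C^{n\times d},
\]
where the block identity uses $V_1V_1^\dagger=I_d$. Its columns lie in $\cV+A(\cV)=\cS$ and are linearly independent because the top block is $I_d$, so $\cS':=R(W)$ is a $d$-dimensional subspace of $\cS$; in particular $\theta_i(\cX,\cS)\leq\theta_i(\cX,\cS')$ for $i\in\I_d$. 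Applying the same cosine/sine computation to a $QR$ orthonormalization $W=QR$, and observing that $Q_2Q_1^{-1}=W_2W_1^{-1}=W_2$ because $W_1=I_d$, yields
\[
 \|\tan(\Theta(\cX,\cS))\|\leq\|\tan(\Theta(\cX,\cS'))\|=\|(D_2-\mu I)V_2V_1^\dagger(D_1-\mu I)^{-1}\|.
\]
The u.i.n. sub-multiplicativity $\|XYZ\|\leq\|X\|_2\|Y\|\|Z\|_2$ then gives the bound $f(\mu)\,\|V_2V_1^\dagger\|=f(\mu)\,\|\tan(\Theta(\cX,\cV))\|$, where $f(\mu)=\max_{i>d}|\lambda_i-\mu|/(\lambda_d-\mu)$.

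\textbf{Shift optimization and iteration.} For $\mu\in[\lambda_n,\lambda_{d+1}]$ the numerator equals $\max(\lambda_{d+1}-\mu,\mu-\lambda_n)$, minimized at $\mu^*=(\lambda_n+\lambda_{d+1})/2$; this gives $f(\mu^*)=c$ and finishes the one-step bound. Iterating $t$ times yields \eqref{eq hay conv1}. I expect the main obstacle to be guessing the auxiliary subspace $\cS'$: the shift-and-rescale construction of $W$ is what converts the abstract optimality of $\cS_t=\cV_{t-1}+A(\cV_{t-1})$ into a concrete matrix inequality on which the shift optimization bites; once that explicit representation is in hand, everything else is routine linear algebra.
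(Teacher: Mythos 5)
Your proposal is correct, and it arrives at the same one-step contraction $\|\tan(\Theta(\cX,\cS))\|\leq c\,\|\tan(\Theta(\cX,\cV))\|$ via the same optimally shifted linear filter $A-\tfrac{\la_{d+1}+\la_n}{2}\,I$ that the paper uses; the difference is in how the tangents of $\Theta(\cX,(A-\mu I)(\cV))$ are controlled. The paper first establishes the general inequality $\|\tan(\Theta(\cX,R(V)))\|\leq\|X_{d,\bot}^*V(X_d^*V)^\dagger\|$ for an arbitrary, not necessarily orthonormal, spanning matrix $V$ (Theorem \ref{cor: tang vs raro}, whose proof takes up most of Section \ref{subsec angles}), applies it to $V=\phi(A)W$ with $W$ orthonormal, and then factors out $\phi(\Lambda_d)^{-1}$ and $\phi(\Lambda_{d,\bot})$ (Theorem \ref{teo cota con polinomio} with $p=d$, so $\cH_d=\cV_{t-1}$). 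You bypass that machinery entirely: the explicit matrix $W=(A-\mu I)VV_1^\dagger(D_1-\mu I)^{-1}$, whose top block is $I_d$ precisely because $V_1V_1^\dagger=I_d$ (i.e.\ because $\cX\cap\cV_{t-1}^\perp=\{0\}$, which as you note is inherited from $\cV\subseteq\cV_{t-1}$), spans a $d$-dimensional subspace of $\cS$ whose tangents can be read off exactly from a QR factorization together with the orthonormal-case Zhu--Knyazev formula, and monotonicity of principal angles (item 1 of Proposition \ref{pro monotonia de angulos}) does the rest. Your route is more self-contained and elementary for this particular theorem; what the paper's heavier detour buys is generality, since Theorem \ref{teo cota con polinomio} is stated for arbitrary polynomials $\phi$ and arbitrary $d\leq p\leq r$ (via the auxiliary subspace $\cH_p$), which is exactly what is needed for the Chebyshev-based Krylov bound of Theorem \ref{teo main Kt}. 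The only point worth adding to a final write-up is the degenerate case $\la_{d+1}=\la_n$, where $\mu^*=\la_n$ gives $D_2-\mu^* I=0$ and $c=0$; this is harmless since then $R(W)=\cX$ and both sides of the one-step bound vanish.
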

\proof See Section \ref{sec proofs main}. \QED

\pausa
Notice that the hypothesis $\cX\cap \cV^\perp=\cX\cap \cV_0^\perp=\{0\}$ holds in a generic case. On the other hand, this condition implies that the tangents of the principal angles $\Theta(\cX,\cV_0)$, that appear in the right-hand side of Eq. \eqref{eq hay conv1}, are well defined.

\pausa
As mentioned in Remark \ref{rem sobre algo1}, the block Krylov subspaces $\cK_0=\cV$ and $\cK_t=\cK_{t-1}+A(\cK_{t-1})$ satisfy that $\cV_t\subset \cK_t$. Hence, our result about the proximity of the $\cV_t$'s to $\cX$ (together with elementary properties of principal angles) imply the proximity of the $\cK_t$'s to $\cX$.
 Moreover, below we obtain an upper bound
for $\Theta(\cX,\cK_t)$ that improves the upper bound derived from that for $\Theta(\cX,\cV_t)$ above. In particular, our upper bound for $\Theta(\cX,\cK_t)$ takes into account the so-called oversampling $\rho=r-d$, where $r=\dim \cV$ denotes the dimension of the initial subspace $\cV$.

\begin{teo}\label{teo main Kt} 
Consider Notation \ref{notac22} and assume that 
$\text{span}\{x_1,\ldots,x_r\}\cap \cV^\perp=\{0\}$. 
Let $\{\cK_t\}_{t=0}^q$ be the block Krylov subspaces induced by $\cV\subset \C^n$ and let 
$d\leq p\leq r$. Then, there exists $\cH_p\subset \cV$ with $\dim\cH_p=d+r-p\geq d$, $\cX\cap \cH_p^\perp=\{0\}$ and such that 
for every unitarily invariant norm $\|\cdot\|$:
		\begin{equation}\label{eq hay convkt}
        \|\tan(\Theta(\cX,\,\cK_t))\|
        \leq
4\, \frac{\la_{p+1}-\la_n}{\lambda_d-\la_n} \, 3^{-t\cdot\min\left\{\sqrt{\frac{\la_d-\la_n}{\la_{p+1}-\la_n} \,-\,1} \, ,\,1\right\}} 
        \, \| \tan(\Theta(\cX,\, \cH_p))\| 
				\ , \ \text{ for } \ t\geq 1
        \,.
    \end{equation}
\end{teo}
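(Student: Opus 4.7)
My plan for the proof of Theorem \ref{teo main Kt} is as follows.

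\textbf{Construction of $\cH_p$.} I would define
\[\cH_p \,:=\, \cV\cap\bigl(\mathrm{span}\{x_{d+1},\ldots,x_p\}\bigr)^{\perp}.\]
The role of $\cH_p$ is to amputate in advance the $p-d$ ``oversampling'' modes $x_{d+1},\ldots,x_p$ which would otherwise pollute a purely Chebyshev bound on $[\lambda_n,\lambda_{p+1}]$. The hypothesis $\mathrm{span}\{x_1,\ldots,x_r\}\cap\cV^{\perp}=\{0\}$ says that $X_r^*V$ is invertible (for $V$ orthonormal spanning $\cV$), so its sub-block $[x_{d+1},\ldots,x_p]^*V$ has rank $p-d$ and a dimension count gives $\dim\cH_p = r-(p-d)=d+r-p$. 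For the non-degeneracy $\cX\cap\cH_p^{\perp}=\{0\}$, I would use $\cH_p^{\perp}=\cV^{\perp}+\mathrm{span}\{x_{d+1},\ldots,x_p\}$; any $x\in\cX\cap\cH_p^{\perp}$ decomposes as $x=u+z$ with $u\in\cV^{\perp}$ and $z\in\mathrm{span}\{x_{d+1},\ldots,x_p\}$, so $x-z\in\mathrm{span}\{x_1,\ldots,x_p\}\cap\cV^{\perp}\subset\mathrm{span}\{x_1,\ldots,x_r\}\cap\cV^{\perp}=\{0\}$, and combined with $x\in\mathrm{span}\{x_1,\ldots,x_d\}$ and $z\perp\cX$ this forces $x=0$.

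\textbf{Reduction to a polynomial-of-$A$ estimate.} For any polynomial $q$ with $\deg q\leq t$ one has $q(A)\cH_p\subset\cK_t$ because $\cH_p\subset\cV$, so monotonicity of principal angles under inclusion yields $\|\tan\Theta(\cX,\cK_t)\|\leq\|\tan\Theta(\cX,q(A)\cH_p)\|$. Put $X_d=[x_1|\cdots|x_d]$, $Y'=[x_{p+1}|\cdots|x_n]$. Any basis matrix $M_0$ of $\cH_p$ decomposes as $M_0 = X_d H_d + Y' H'$ (no middle block by the very definition of $\cH_p$), and $q(A) M_0 = X_d(Q_d H_d) + Y'(Q' H')$ with $Q_d=\mathrm{diag}(q(\lambda_i))_{i\leq d}$, $Q'=\mathrm{diag}(q(\lambda_j))_{j>p}$. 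The pseudoinverse bound $\|\tan\Theta(\cX,R(X_dA+Y'B))\|\leq\|BA^{\dagger}\|$ whenever $\mathrm{rk}\,A=d$ -- proven by taking the $d$-dimensional sub-range $R(MA^{\dagger})=R(X_d+Y'BA^{\dagger})$, on which $\tan\theta_i=s_{d+1-i}(BA^{\dagger})$ by the standard formula for angles between $R(X_d)$ and $R(X_d+Y'C)$, together with the monotonicity $R(MA^{\dagger})\subset R(M)$ -- and the identity $(Q_dH_d)^{\dagger}=H_d^{\dagger}Q_d^{-1}$, then give
\[\|\tan\Theta(\cX,q(A)\cH_p)\|\,\leq\,\|Q'\|_2\cdot\|H'H_d^{\dagger}\|\cdot\|Q_d^{-1}\|_2.\]
Selecting a basis of $\cH_p$ that realises the canonical angles with $\cX$ makes the columns of $H'H_d^{\dagger}$ orthogonal with norms $\tan\theta_i(\cX,\cH_p)$, whence $\|H'H_d^{\dagger}\|=\|\tan\Theta(\cX,\cH_p)\|$ for every unitarily invariant norm (this quantity is basis-invariant).

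\textbf{Polynomial choice.} I would take
\[q(x) \,=\, T_{t-1}\!\left(\frac{2x-\lambda_{p+1}-\lambda_n}{\lambda_{p+1}-\lambda_n}\right)\cdot\frac{x-\lambda_n}{\lambda_d-\lambda_n},\]
of degree exactly $t$. Set $\eta=(\lambda_d-\lambda_n)/(\lambda_{p+1}-\lambda_n)$ and $\gamma=\eta-1$. The Chebyshev factor is bounded by $1$ on $[\lambda_n,\lambda_{p+1}]$ and monotone on $[1,\infty)$, while the linear factor is bounded by $1/\eta$ on $[\lambda_n,\lambda_{p+1}]$ and is $\geq 1$ on $[\lambda_d,\lambda_1]$, whence $q(\lambda_i)\geq T_{t-1}(1+2\gamma)$ for $i\leq d$ and $|q(\lambda_j)|\leq 1/\eta$ for $j>p$. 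The linear factor thus delivers the oversampling gain $1/\eta$, the Chebyshev factor the exponential decay, and $\|Q'\|_2\leq 1/\eta$, $\|Q_d^{-1}\|_2\leq 1/T_{t-1}(1+2\gamma)$.

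\textbf{Main obstacle.} The hardest step is to translate the Chebyshev lower bound into the asserted $3^{t\min\{\sqrt\gamma,1\}}$ rate. Starting from $T_{t-1}(1+2\gamma)\geq\tfrac12(\sqrt\gamma+\sqrt{1+\gamma})^{2(t-1)}\geq\tfrac12(1+\sqrt\gamma)^{2(t-1)}$, the key elementary inequality is $(1+u)^2\geq 3^u$ for $u\in[0,1]$ (equivalent to $2\ln(1+u)\geq u\ln 3$; verified by noting the derivative $2/(1+u)-\ln 3$ changes sign exactly once on $[0,1]$ while the endpoint values $0$ and $\ln(4/3)$ are both non-negative), complemented by $(1+u)^2\geq 3$ for $u\geq 1$. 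Together these yield $(1+\sqrt\gamma)^{2(t-1)}\geq 3^{(t-1)\min\{\sqrt\gamma,1\}}$, hence $1/T_{t-1}(1+2\gamma)\leq 2\cdot 3^{-(t-1)\min\{\sqrt\gamma,1\}}$. Multiplying this with $\|Q'\|_2\leq 1/\eta$ and absorbing the shift $t-1\mapsto t$ via $3^{\min\{\sqrt\gamma,1\}}\leq 3$ produces a bound of the advertised shape $(c/\eta)\cdot 3^{-t\min\{\sqrt\gamma,1\}}\|\tan\Theta(\cX,\cH_p)\|$; careful book-keeping of the constant (possibly via a marginally sharper estimate on the Chebyshev factor, or by a slight adjustment of $q$) is what must deliver $c\leq 4$.
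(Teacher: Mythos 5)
Your construction of $\cH_p$ is exactly the paper's (Theorem \ref{teo cota con polinomio} takes $\cH_p=R(WZ)$ with $Z$ spanning $\ker([x_{d+1}\cdots x_p]^*W)$, i.e.\ $\cV\cap\mathrm{span}\{x_{d+1},\ldots,x_p\}^\perp$), your dimension count and non-degeneracy argument are both valid, and your pseudoinverse/tangent step is the content of the paper's Theorem \ref{teo: tang vs raro} combined with Zhu--Knyazev. Where you genuinely diverge is the polynomial: the paper uses the plain rescaled Chebyshev $\phi_t(x)=T_t((x-\la_n)/(\la_{p+1}-\la_n))$ of degree $t$ and extracts the oversampling gain $(\la_{p+1}-\la_n)/(\la_d-\la_n)=1/\eta$ from a super-linear growth lemma, $T_t(x)\geq \tfrac14\,x\,3^{t\min\{\sqrt{x-1},1\}}$ for $x\geq 1$, so that all $t$ degrees feed the exponential rate; you instead spend one degree on an explicit linear factor $(x-\la_n)/(\la_d-\la_n)$ and keep only $T_{t-1}$ for the decay. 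Both routes are legitimate and your verification of $|q(\la_j)|\leq 1/\eta$ for $j>p$ and $q(\la_i)\geq T_{t-1}(1+2\gamma)$ for $i\leq d$ is correct.

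The one genuine shortfall is the constant, and you have correctly diagnosed it yourself: your chain gives $1/T_{t-1}(1+2\gamma)\leq 2\cdot 3^{-(t-1)\min\{\sqrt\gamma,1\}}$, and re-indexing $t-1\mapsto t$ costs a factor $3^{\min\{\sqrt\gamma,1\}}$, which in the regime $\gamma\geq 1$ is exactly $3$; the resulting bound is $6/\eta$, not $4/\eta$. This is not repairable by "book-keeping" alone within your estimate $T_{t-1}(1+2\gamma)\geq\tfrac12(1+\sqrt\gamma)^{2(t-1)}\geq\tfrac12\,3^{(t-1)\min\{\sqrt\gamma,1\}}$: for $\gamma$ with $3^{\sqrt\gamma}>2$ (i.e.\ $\gamma\gtrsim 0.4$) that inequality chain cannot deliver $\tfrac14\,3^{t\min\{\sqrt\gamma,1\}}$, and one must either keep the sharper base $(\sqrt\gamma+\sqrt{1+\gamma})^2$ throughout (which does close the gap for $t\geq 2$, with $t=1$ checked by hand), or switch to the paper's degree-$t$ polynomial and its lemma $T_t(x)\geq\tfrac14\,x\,3^{t\min\{\sqrt{x-1},1\}}$. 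So your proof as written establishes the theorem with $6$ in place of $4$; the structure, the rate, and all parameter dependences are correct.
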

\proof See Section \ref{sec proofs main}. \QED

\pausa We point out that the hypothesis $\text{span}\{x_1,\ldots,x_r\}\cap \cV^\perp=\{0\}$ holds in a generic case. On the other hand, the condition $\cX\cap \cH_p^\perp=\{0\}$  implies that the tangents of the principal angles $\Theta(\cX,\cH_p)$, that appear in the right-hand side of Eq. \eqref{eq hay convkt}, are well defined for every $d\leq p\leq r$.

\pausa
The main difference between our proximity analysis of the finite sequences $\{\cV_t\}_{t=0}^q$ and 
$\{\cK_t\}_{t=0}^q$ is that the upper bounds in Theorem \ref{teo main Kt} take into account the oversampling parameter $\rho=r-d$, through the choice of the value of the parameter $p$. Below we consider the impact of the choice of $d\leq p\leq r$ in the upper bounds for the block Krylov subspaces.

\begin{rem}\label{rem sobre p}
Consider the notation in Theorem \ref{teo main Kt}. Let $d\leq p_1\leq p_2 \leq r$; an inspection of the (technical results that allow to obtain the) proof of Theorem \ref{teo main Kt} show that  
$\cH_{p_2}\subset \cH_{p_1}\subset \cV$. Since $\dim \cH_{p_i}=d+r-p_i\geq d$, for $i=1,2$, 
we get that $\Theta(\cX,\cH_{p_1})\leq \Theta(\cX,\cH_{p_2})$; hence,
$\| \tan(\Theta(\cX,\, \cH_{p_1}))\|\leq \| \tan(\Theta(\cX,\, \cH_{p_2}))\|$. On the other hand, since $\la_1\geq \ldots\geq \la_n$ then
$$
 \frac{\la_{p_2+1}-\la_n}{\lambda_d-\la_n} \ 3^{-t\cdot\min\left\{\sqrt{\frac{\la_d-\la_n}{\la_{p_2+1}-\la_n} \,-\,1} \, ,\,1\right\}} 
\leq
 \frac{\la_{p_1+1}-\la_n}{\lambda_d-\la_n} \ 3^{-t\cdot\min\left\{\sqrt{\frac{\la_d-\la_n}{\la_{p_1+1}-\la_n} \,-\,1} \, ,\,1\right\}} \,.$$
Thus, for small values of $t\geq 1$ the upper bound in Eq. \eqref{eq hay convkt} for $p=p_1$ provides better estimates than the corresponding upper bound for $p=p_2$. On the other hand, for larger values of $t$ the upper bound in Eq. \eqref{eq hay convkt} for $p=p_2$ provides better estimates than the corresponding upper bound for $p=p_1$. This applies, in particular, for the case $p_1=d$ i.e. for $\cH_d=\cV$. \EOE
\end{rem}

\pausa
As a final comment, we mention that we have implemented Algorithm 
\ref{algo1} and obtained several numerical examples. We have also implemented
the block Krylov method and computed the upper bounds obtained in Theorems \ref{teo main conv vt} and \ref{teo main Kt}. For the analysis of these examples together with some plots of the results see Section \ref{sec num exa}.

\subsection{Computable subspace expansions}\label{sec compu subop}

As we have already pointed out (see the comments after 
Remark \ref{rem sobre algo1}) the optimal subspace expansions $\cV_t$ do not provide 
approximations of the invariant (target) subspace $\cX$ that are useful from a 
numerical perspective. Thus, we follow ideas from \cite{Jia22} and consider
computable (typically suboptimal, see the numerical examples in Section \ref{sec num exa}) subspace expansions induced by projection methods.
Again, we will focus on the case of exterior eigenvalues of Hermitian matrices, but 
this approach can be extended to more general settings, using different projection methods (e.g. those considered in \cite{Jia05,JiaLi14,JiaLi15}).

\pausa
Consider Notation \ref{notac22} and let $\cS:=\cV+A(\cV)$. Notice that (for $t=1$) Step 1.2. in Algorithm \ref{algo1} includes the computation of $P_\cS(\cX)$; since $\cX$ is the target
subspace (and therefore unknown) \cite{Jia22} suggest that we can replace the subspace
$P_\cS(\cX)$ with some computable subspace that plays it's role. 
These computable replacements of $P_\cS(\cX)$ are motivated by some well known 
projection methods from numerical linear algebra. Indeed, motivated by the 
Raleigh-Ritz method, we can consider the subspace $\widehat \cX=\text{span}\{\widehat x_1,\ldots,\widehat x_d\}$, where the vectors $\widehat x_i$ are the eigenvectors of largest eigenvalues of the compression $A_\cS=P_\cS A P_\cS$ of $A$ to the subspace $\cS$; i.e. $A_\cS \,\widehat x_i=\la_i(A_\cS) \, \widehat x_i$, for $i\in\I_d$, where
$\la(A_\cS)=(\la_i(A_\cS))\in (\R^n)\da$ denotes the eigenvalues of $A_\cS$. Similarly, we can consider
the computable replacement of $\cX$ induced by the refined Raleigh-Ritz (see \cite{J97,J98,Jia04}). Thus, it is convenient to consider the following

\begin{fed}[\cite{Jia22}] For a chosen projection method, the approximation of $\cX$ extracted by it
from $A$ and $\cS=\cV+A(\cV)$ is called the computable replacement of $\cX$, and denoted $\widehat \cX$. \EOE
\end{fed}

\pausa
Based on the previous ideas we consider the following

\begin{algorithm}
\caption{Computable (block) subspace expansion}\label{algo2}
\centerline{
}
\begin{algorithmic}[1]
\REQUIRE $A\in\C^{n\times n}$, $\cX,\,\cV\subset \C^n$ as in Notation \ref{notac1}; an integer $q\geq 0$ (number of iterations). A numerical projection method ``Proj-Method''.

\medskip

\STATE Set $\widehat \cV_0=\cV$. For $1\leq t\leq q$ define recursively: 
\begin{enumerate}
\item[{\small 1.1:}] $\widehat \cS_t=\widehat \cV_{t-1}+A(\widehat \cV_{t-1}) $;
\item[{\small 1.2:}] $\widehat \cX_t=$ Proj-Method$(A,\widehat \cS_t)$ the computable replacement of $\cX$ from $A$ and $\widehat \cS_t$.
 \item[{\small 1.3:}]  $\widehat  \cN_t=(1-P_{\widehat \cV_{t-1}})(\widehat \cX_t)$ 
\item[{\small 1.4:}] $\widehat \cV_t=\widehat \cV_{t-1}\oplus \widehat \cN_t$. 
\end{enumerate}
\STATE{\bf Return:} Subspaces $\cV=\widehat \cV_0\subset \ldots\subset \widehat \cV_q\subset \C^n$.
\end{algorithmic}
\end{algorithm}

\pausa
Recall that  our analysis of the theoretical version of Algorithm \ref{algo2} is obtained for
the exterior eigenvalues of Hermitian matrices. Thus, we have implemented Algorithm \ref{algo2} 
for the case in which $A\in\C^{n\times n}$ is Hermitian and $\cX$ is spanned by the eigenvectors of $A$ corresponding to the largest $d$ eigenvalues. Hence, we have considered the 
Raleigh-Ritz and refined Raleigh-Ritz projection methods for the numerical implementation, which are known to work well in these situations. We have compared these numerical versions with the (theoretical) optimal subspace expansion from Algorithm \ref{algo1} (see Section \ref{sec num exa}).

\pausa
Notice that there is not a direct comparison between our (theoretical) Algorithm \ref{algo1}
and its numerical counterparts in Algorithm \ref{algo2}: indeed, if we choose the same initial
(guess) subspace $\cV=\cV_0$ then we can compare the principal angles between the target subspace and the outputs of these algorithms in the first step (that is, for $t=1$). After that, the outputs produced by these algorithms will not be comparable, as the subspaces produced in the first step will differ.
Since the optimality of the construction in Algorithm \ref{algo1} is {\it step-wise}, it could be 
that the numerical counterparts from Algorithm \ref{algo2} outperform the output from Algorithm \ref{algo1} in the long term.
The numerical examples show that, typically (but not always), the output from Algorithm \ref{algo1} provides a subspace that is closer to the target subspace than the output from Algorithm \ref{algo2}, for a fixed number of iterations. 

\begin{rem}[Comparison of Algorithm \ref{algo2} with previous numerical expansions]\label{rem comparacion}
    To compare the performance of our numerical algorithms, we have considered the Residual Arnoldi method (RA) developed in \cite{Lee07, LeeS07}. 
    We have also considered an extension of Jia's algorithm for computable subspace expansion introduced in \cite[Section 3]{Jia22}; briefly, given $\widehat{\cV}_{t-1}$, we take the computable replacement $\widehat{\cR}_{t-1}$ of $\cX$ from $A$ and the range of the residual matrix $R_{t-1}=(1-P_{\widehat{\cV}_{t-1}})A\widehat{V}_{t-1}$, where $\widehat{V}_{t-1}$ has orthonormal columns that span $\widehat{\cV}_{t-1}$. For this computable replacement we have used the Raleigh-Ritz projection method. Finally, we set $\widehat{\cV}_t = \widehat{\cV}_{t-1}\oplus \widehat{\cR}_{t-1}$.
\end{rem}

\section{Proofs of the main results}\label{sec proofs main}

In this section we present the proofs of our  mains results described
in Sections \ref{sec main1} and \ref{sec main2}. The  proofs 
make use of some technical results; we describe these technical
results here and present their proofs in Section \ref{sec append complet} (Appendix).

\pausa
{\bf Proof of the results in Section \ref{sec main1}}. 
We begin with the following technical results that we need in the sequel.

\begin{pro}\label{pro monotonia de angulos}
Let $\cX,\,\cY$ and $\cS$ be subspaces of $\C^n$ 
such that $d=\dim\cX\leq\dim\cY$ and $\cY\inc\cS$. 
\begin{enumerate}
\item $\Theta(\cX,\,\cS) \leq \Theta(\cX,\,\cY)$ that is, $\theta_i(\cX,\,\cS) \leq \theta_i(\cX,\,\cY)$  every $ i\in \I_d\,$.
\item Denote by  $\cM = P_\cS (\cX)\inc \cS $ and $m = \dim \cM \le d$. Then 
\beq \label{da igual}
\theta_i(\cX,\cS) = \theta_i(\cX,\cM)<\pi/2  \peso{if \ $ 1\le i \le m$ \ 
and \  $\theta_i(\cX,\cS) =\frac\pi2$  
\  if}  m< i\le d  \ . 
\eeq
\item 
$\Theta(\cX,\cS)= \Theta(\cX,\cY) \iff  \cM \inc \cY$.
\end{enumerate}
\end{pro}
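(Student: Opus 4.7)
The plan is to work throughout with the characterization $\cos(\theta_i(\cX,\cT)) = s_i(P_\cT P_\cX)$ and to exploit one key identity: whenever $\cM = P_\cS(\cX)$, we have $P_\cS P_\cX = P_\cM P_\cX$. This holds because for any $v$, the vector $P_\cS P_\cX v$ lies in $\cM$ by definition of $\cM$, while the residual $(I-P_\cS)P_\cX v$ lies in $\cS^\perp \subset \cM^\perp$; hence $P_\cS P_\cX v$ is already the orthogonal projection of $P_\cX v$ onto $\cM$.

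For item 1, the inclusion $\cY \inc \cS$ gives $0 \leq P_\cY \leq P_\cS$ in the positive-semidefinite order, so $P_\cX P_\cY P_\cX \leq P_\cX P_\cS P_\cX$. Weyl's monotonicity principle for eigenvalues of Hermitian matrices then yields $s_i(P_\cS P_\cX)^2 \geq s_i(P_\cY P_\cX)^2$ for every $i \in \I_d$, which in terms of cosines translates to $\theta_i(\cX,\cS) \leq \theta_i(\cX,\cY)$.

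For item 2, the identity $P_\cS P_\cX = P_\cM P_\cX$ shows that the two operators share singular values, so $\theta_i(\cX,\cS) = \theta_i(\cX,\cM)$ for $i \in \I_m$. The rank of $P_\cS P_\cX$ equals $\dim P_\cS(\cX) = m$, so exactly $m$ of its singular values are strictly positive. This gives $\cos(\theta_i(\cX,\cS))>0$, hence $\theta_i(\cX,\cS)<\pi/2$, for $i \le m$, and $\cos(\theta_i(\cX,\cS))=0$, hence $\theta_i(\cX,\cS) = \pi/2$, for $m < i \le d$.

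For item 3, the forward direction is obtained by repeating the residual argument: if $\cM \inc \cY \inc \cS$, then for any $v$ the vector $P_\cS P_\cX v = P_\cM P_\cX v$ lies in $\cM \inc \cY$, and its residual $(I-P_\cS)P_\cX v$ lies in $\cS^\perp \inc \cY^\perp$; this identifies $P_\cS P_\cX v$ as the projection of $P_\cX v$ onto $\cY$, so $P_\cY P_\cX = P_\cS P_\cX$ and the angles agree. The converse is the delicate step, and will be the main obstacle. I plan to use a trace argument: equality $\Theta(\cX,\cS) = \Theta(\cX,\cY)$ yields
\[
\tr\bigl(P_\cX(P_\cS - P_\cY)P_\cX\bigr) = \sum_{i=1}^{d}\cos^2\theta_i(\cX,\cS) - \sum_{i=1}^{d}\cos^2\theta_i(\cX,\cY) = 0.
\]
Since $\cY \inc \cS$, the difference $P_\cS - P_\cY = P_{\cS \cap \cY^\perp}$ is positive semidefinite; writing $T = (P_\cS - P_\cY)^{1/2}$, the vanishing trace equals $\|TP_\cX\|_F^2$, so $TP_\cX = 0$ and therefore $(P_\cS - P_\cY)P_\cX = 0$. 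In particular, for every $x \in \cX$ one has $P_\cS x = P_\cY x \in \cY$, giving $\cM = P_\cS(\cX) \inc \cY$ and closing the proof.
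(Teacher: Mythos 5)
Your proposal is correct. Items 1 and 2 follow essentially the same lines as the paper: for item 1 you use $P_\cY\leq P_\cS$ plus Weyl monotonicity applied to $P_\cX P_\cY P_\cX\leq P_\cX P_\cS P_\cX$, while the paper compresses $P_\cS P_\cX P_\cS$ by $P_\cY$ and invokes interlacing --- two standard phrasings of the same monotonicity; for item 2 both proofs rest on the identity $P_\cM P_\cX=P_\cS P_\cX$, which you justify correctly by the residual decomposition $P_\cX v=P_\cS P_\cX v+(I-P_\cS)P_\cX v$ with the two summands in $\cM$ and $\cM^\perp$ respectively. The genuine divergence is in the converse of item 3. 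The paper runs the equality case of the interlacing inequalities: equality of all $d$ angles forces $\cY$ to contain an orthonormal system of eigenvectors of $P_\cS P_\cX P_\cS$ for the eigenvalues $\cos^2\theta_i(\cX,\cS)$, and the $m$ of these with positive eigenvalue span $\cM$. You instead sum the squared cosines, observe that equality of the angle lists gives $\tr\bigl(P_\cX(P_\cS-P_\cY)P_\cX\bigr)=0$, and use positivity of $P_\cS-P_\cY=P_{\cS\cap\cY^\perp}$ to conclude $(P_\cS-P_\cY)P_\cX=0$, i.e. $P_\cS x=P_\cY x\in\cY$ for all $x\in\cX$. Your route avoids any appeal to the (somewhat delicate) rigidity of the equality case in interlacing, replacing it with an elementary Frobenius-norm vanishing argument; the paper's route has the mild advantage of exhibiting explicitly the orthonormal basis of $\cM$ inside $\cY$, but your argument is shorter and, as a bonus, yields the stronger operator identity $P_\cY P_\cX=P_\cS P_\cX$ under the equality hypothesis.
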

\proof See Section \ref{Appendixity1}. \QED

\begin{pro}\label{rem desc sum y algo mas}
Let $\cV \coma \cA \inc \C^n$ be subspaces. 
If we denote by $\cS \igdef \cV+ \cA$, then  
\begin{equation}\label{eq desc sum1}
\cS = \cV+ \cA=\cV\oplus (1-P_\cV)(\cA)  \implies P_\cS=P_\cV+P_{(1-P_\cV)(\cA)} \ .
\end{equation} 
Moreover, given a subspace $\cE$ 
such that $\cV\inc \cE \inc \cS$, 
then there exists a subspace $\cD\inc \cA$ such that 
\begin{equation}\label{eq desc sum2}
\cE=\cV+\cD=\cV\oplus (1-P_\cV)\cD\ .
\eeq
\end{pro}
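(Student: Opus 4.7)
\textbf{Proof proposal for Proposition \ref{rem desc sum y algo mas}.}

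The plan is to handle the two claims separately, both via the standard orthogonal splitting across $\cV$ and $\cV^\perp$, and to reduce claim \eqref{eq desc sum2} to claim \eqref{eq desc sum1} by a lifting argument through the linear map $a\mapsto (1-P_\cV)a$.

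For \eqref{eq desc sum1}, I would first observe that $(1-P_\cV)(\cA)\inc \cV^\perp$, so the sum $\cV+(1-P_\cV)(\cA)$ is automatically orthogonal and hence direct. An arbitrary element $s=v+a\in\cV+\cA$ decomposes as
\[
s \;=\; \bigl(v+P_\cV a\bigr) \;+\; (1-P_\cV)a,
\]
with the first summand in $\cV$ and the second in $(1-P_\cV)(\cA)$, showing $\cV+\cA\inc \cV\oplus(1-P_\cV)(\cA)$; the reverse inclusion is obvious. The formula $P_\cS=P_\cV+P_{(1-P_\cV)(\cA)}$ then follows from the general fact that the orthogonal projection onto an orthogonal direct sum is the sum of the projections onto the summands.

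For \eqref{eq desc sum2}, the first step is to identify the ``perpendicular part'' of $\cE$ inside $\cS$. Using part \eqref{eq desc sum1} I would note that $\cV^\perp\cap\cS=(1-P_\cV)(\cA)$: the inclusion ``$\supset$'' is immediate from $(1-P_\cV)(\cA)\inc \cV^\perp$ together with $(1-P_\cV)(\cA)\inc \cS$, while ``$\subset$'' follows because the unique decomposition $s=v+w$ with $v\in\cV$ and $w\in(1-P_\cV)(\cA)$ forces $v=0$ when $s\in\cV^\perp$. Setting $\cF\igdef \cE\cap(1-P_\cV)(\cA)=\cE\cap\cV^\perp$, the inclusion $\cV\inc \cE\inc \cS$ and orthogonality give $\cE=\cV\oplus \cF$.

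The remaining step is to realize $\cF$ as $(1-P_\cV)(\cD)$ for some $\cD\inc \cA$. Since the linear map $T\colon\cA\to(1-P_\cV)(\cA)$, $T(a)=(1-P_\cV)a$, is surjective, I can choose a basis of $\cF$ and lift each basis vector to a preimage in $\cA$; the span of these lifts is a subspace $\cD\inc\cA$ with $T(\cD)=\cF$. Applying part \eqref{eq desc sum1} to $\cD$ in place of $\cA$ then yields
\[
\cV+\cD \;=\; \cV\oplus(1-P_\cV)(\cD) \;=\; \cV\oplus \cF \;=\; \cE,
\]
which is exactly \eqref{eq desc sum2}. The only mild obstacle is verifying $\cV^\perp\cap\cS=(1-P_\cV)(\cA)$; everything else is routine manipulation of orthogonal decompositions.
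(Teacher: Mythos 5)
Your proof is correct. The argument for \eqref{eq desc sum1} is essentially the one in the paper (decompose $v+a$ as $P_\cV(v+a)+(1-P_\cV)a$ and reverse), with the added benefit that you explicitly justify the directness of the sum and the projection formula $P_\cS=P_\cV+P_{(1-P_\cV)(\cA)}$, which the paper leaves implicit. For \eqref{eq desc sum2} you take a genuinely different route: the paper simply sets $\cD=\cA\cap\cE$ and checks directly that $\cV+\cD=\cE$ (if $e=v+a$ with $v\in\cV\inc\cE$ and $a\in\cA$, then $a=e-v\in\cA\cap\cE$), whereas you first split $\cE=\cV\oplus\cF$ with $\cF=\cE\cap\cV\orto=\cE\cap(1-P_\cV)(\cA)$ and then lift $\cF$ back into $\cA$ through the surjection $a\mapsto(1-P_\cV)a$. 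Both are valid; the paper's choice of $\cD$ is canonical and shorter, while yours makes the orthogonal geometry of $\cE$ inside $\cS$ explicit at the cost of a non-canonical choice of lifts (harmless here, since only existence of $\cD$ is claimed). Each of your intermediate claims --- the identity $\cV\orto\cap\cS=(1-P_\cV)(\cA)$, the splitting $\cE=\cV\oplus\cF$, and $T(\cD)=\cF$ for the span of the lifts --- checks out.
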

\proof See Section \ref{Appendixity1}. \QED

\pausa
Now we can present the proof of our first main result.

\proof[Proof of Theorem \ref{teo opti 1}]
Recall that we consider an arbitrary square matrix $A\in\C^{n\times n}$ 
and its $A$-invariant subspace $\cX$, with $\dim\cX=d$. We further consider
a subspace $\cV\subset \C^n$, with  $A(\cV)\not\subset \cV$ and $\dim\cV=r\geq d$.
Finally, we let $V\in \C^{n\times r}$ have orthonormal columns such that
$R(V)=\cV$ and consider $R=AV-V(V^*AV)=(1-P_{\cV})AV$. 

\pausa 
We let $\cS=\cV+A(\cV)$, $\cM=P_\cS(\cX)\subseteq \cS$ and $\cN = (1-P_\cV)(\cM)$, so that 
 $\dim \cN  \leq \dim \cM \leq d$.  
By the first part of Proposition \ref{rem desc sum y algo mas}, 
\beq\label{M y N}
\cV+\cM=\cV\oplus (1-P_\cV)(\cM) = \cV\oplus\cN \inc \cS\,.
\eeq
Clearly, the range of the matrix $R=AV-V(V^*AV)=(1-P_\cV)AV$ coincides with $(1-P_\cV)A(\cV)$.
 Since $\cS=\cV\oplus (1-P_\cV) A(\cV)$ and 
$RR^\dagger$ is the orthogonal projection onto the range of $R$,
we conclude, once again by the first part of Proposition \ref{rem desc sum y algo mas}, that $P_\cS=P_\cV+RR^\dagger$.
Hence, $$\cN=(1-P_\cV) P_\cS(\cX)= (1-P_\cV) (P_\cV+RR^\dagger) (\cX)= RR^\dagger (\cX) 
$$ 
and $\cC_{\rm op}=R^\dagger(\cN)=R^\dagger RR^\dagger(\cX) = R^\dagger(\cX)$, 
where we used that $(1-P_\cV)P_\cV=0$ and $(1-P_\cV)R= R$. 
 These facts, and Eq. \eqref{eq main3}, show item 2.

\pausa
Note that $\cC_{\rm op} = R^\dagger (\, \cN\,) = R^\dagger (\cX)\inc \ker R^\perp \inc\C^r$, so $\dim\cC_{\rm op}\leq d$ and 
$ R(\cC_{\rm op}) = RR^\dag (\cN) =
\cN $.
Now, take a subspace  $\cC_{0}\subseteq \C^r$ such that 
$$
\dim \cC_{0}=d \py 
\cC_{\rm op}\subseteq P_{\ker R^\perp} \cC_{0}\,.
$$ 
For example, we can take $\cC_{0}$ to be any subspace with dimension $d$ that contains $\cC_{\rm op}$.
We set $\cW_{0}:=V\cC_{0} \subseteq \cV$ so that, by construction, $\dim \cW_{0}=d$ and 
$$ 
\cN=R(\cC_{\rm op})\subseteq RP_{\ker R^\perp}\cC_{0}=R(\cC_{0})
=(1-P_\cV)AV(\cC_{0})=(1-P_\cV)A (\cW_{0})\,.$$
Then,
$$
\cV+\cM \stackrel{\eqref{M y N}}=
\cV\oplus \cN 
\subseteq \cV\oplus(1-P_\cV)A (\cW_{0})=\cV+A (\cW_{0})\,.
$$ 
Therefore, $\cM\subseteq \cV+A (\cW_{0})\inc \cS$ and $\dim \cV+A (\cW_{0})\geq \dim\cV\geq d$.
 Let $\cW\subseteq \cV$ be such that $\dim\cW=d$; hence, $\dim \cV+A(\cW)\geq \dim \cV\geq d$ and then, by Proposition \ref{pro monotonia de angulos}, we have that 
$$
\Theta(\cX\coma\cV+A(\cW_0)) =\Theta(\cX\coma \cS) 
=\Theta(\cX\coma\cV+A(\cV)) \leq \Theta(\cX\coma\cV+A(\cW))\,.
$$ 
 This 
 shows item 1 and one implication in item 3.

\pausa
 Conversely,  assume that $\cW\subset \cV$ is such that $\dim \cW=d$ and  
$$
\Theta(\cX\coma\cV+A(\cW))=
\Theta(\cX\coma\cS)\,.
$$
By item $3.$ in Proposition \ref{pro monotonia de angulos}, 
we see that
$
\cM\subseteq \cV+A(\cW)$
so then $ \cV+\cM\subseteq \cV+A(\cW)\,.
$
If we let $\cC\subseteq \C^r$ be such that $\cW=V\cC$ then, 
$\dim \cC = \dim \cW  =d$ since $V$ has orthonormal columns.
By Proposition \ref{rem desc sum y algo mas} we conclude that
$$
R(\cC_{\rm op})= \cN=(1-P_\cV) (\cM)\subseteq (1-P_\cV)A(\cW)= (1-P_\cV) AV (\cC)=R(\cC)
\,. 
$$ 
Therefore 
 $\cC_{\rm op}=R^\dagger R (\cC_{\rm op})\subset R^\dagger R (\cC)=P_{\ker R^\perp}\cC$. So that 
$\cC$ is one of those subspaces of Eq. \eqref{el W0}. These facts show the other implication in item 3.

\pausa
Assume further that $\cV\cap A^{-1}(\cV)=\{0\}$ and that $\dim \cN=d$. The first of these assumptions implies that $\ker(R)=\{0\}$. Indeed, if there is a vector $x\in\C^r$ such that $0=Rx=(1-P_\cV)AVx$ then we must have $Vx\in A^{-1}(\cV)\cap\cV=\{0\}$. Hence $Vx=0$ and thus, $x=0$ since $V$ is injective. 

\pausa
Since $\cN$ and $\cC_{\rm op}$ have the same dimension (which we are assuming is $d$) and $R$ is injective, it follows that $\cC_{\rm op}$ is the only subspace that satisfies Eq. \eqref{el W0}. So, $\cW_0$ as in Eq. \eqref{los W0} is unique and given by $\cW_0=V(\cC_{\rm op})=VR^\dagger(\cX)$. Finally, using Proposition \ref{rem desc sum y algo mas} again, we have that
\[
\cV+A(\cW_0)
=
\cV\oplus(1-P_\cV)A(VR^\dagger(\cX))
=
\cV\oplus RR^\dagger(\cX)
=
\cV\oplus \cN
\,.
\]
\QED

\proof[Proof of Corollary \ref{coro para aplic}] We keep using the notation from the previous proof.
By Proposition \ref{rem desc sum y algo mas} we get that 
$$\cV+P_\cS(\cX)=\cV+(1-P_\cV)(P_\cS(\cX))=\cV\oplus \cN\,.$$
Item 1 follows from these facts.
Moreover, $P_\cS(\cX)\inc \cV\oplus \cN$ and hence, by item $2.$
 in Proposition \ref{pro monotonia de angulos}, $\Theta(\cX,\cS)=\Theta(\cX,\cV\oplus \cN)$ which proves the first part of item 3.
The proof of item 2. and the rest of item 3. can be obtained using Proposition \ref{rem desc sum y algo mas} in a way similar to that considered in the proof of Theorem \ref{teo opti 1} above.

\pausa
To show item 4., let $\cW_0\subset \cV$ with $\dim \cW_0=d$ satisfying Eq. \eqref{los W0}. By item 3. in Theorem \ref{teo opti 1} $\cV\oplus \cN\subset \cV+A(\cW_0)$. Since $\dim \cV\oplus\cN=r+d\geq \dim \cV+A(\cW_0)$ we conclude that 
$\cV\oplus \cN=\cV+A(\cW_0)$.
\QED

\pausa
{\bf Proofs of the results in Section \ref{sec main2}}. To obtain a detailed proof of 
Theorems \ref{teo main conv vt} and \ref{teo main Kt} we will describe several technical results. To simplify our exposition we consider the following
\begin{nota}\label{notac2} Let $A\in\C^{n\times n}$ be an Hermitian matrix. In what follows we consider:
\begin{enumerate}
\item An eigen-decomposition $A=X\Lambda X^*$ , where $X\in \C^{n\times n}$ is a unitary matrix and 
$\Lambda=\text{diag}(\la_1,\ldots,\la_n)$, with $\la_1\geq \ldots\geq \la_n$. We let $x_1,\ldots,x_n$ denote the columns of $X$. Hence, $A\,x_j=\la_j\,x_j$, for $1\leq j\leq n$. We assume that $\la_d>\la_{d+1}$ and set $\cX=\text{span}\{x_1,\ldots,x_d\}$.
\item For $1\leq \ell\leq n$ we consider the partitions:
$$
X=\begin{bmatrix} X_\ell& X_{\ell,\bot} \end{bmatrix} \peso{and} \Lambda=\begin{bmatrix} \Lambda_\ell & 0 \\ 0 & \Lambda_{\ell,\bot} \end{bmatrix} \,.
$$ where $X_\ell\in \C^{n\times \ell}$ and $\Lambda_\ell\in\C^{\ell\times \ell}$. 
\end{enumerate}
\end{nota}

\begin{rem}
Consider Notation \ref{notac2}. Since we are assuming that $\la_d>\la_{d+1}$ then, 
the subspace $\cX$ as in Theorems \ref{teo main conv vt} and \ref{teo main Kt} is uniquely determined. Moreover, we have that $\cX=R(X_d)$. \EOE
\end{rem}

\pausa
Consider Notation \ref{notac2}. Let $V\in\C^{n\times r}$ be such that $R(V)=\cV$ with $\dim \cV=r\geq d$ (but $V$ does not necessarily have orthonormal columns). We are interested in considering the singular values of the expression $X_{d,\bot}^*V(X_d^*V)^\dagger$ for arbitrary $V$ as above. 
On the other hand, in \cite[Theorem 3.1]{KZ13} Zhu and Knyazev show 
that if $\cX\cap \cV^\perp=\{0\}$ (or, equivalently, $\rk(X^*V)=\rk(V^*X)=d$) and we further assume that $V$ has orthonormal columns 
 then, the first $d$ singular values of the matrix $X_{d,\bot}^*V(X_d^*V)^\dagger$ are the  tangents of the angles between $\cX$ and $\cV$ (that necessarily lay in $[0,\,\pi/2)$).

\pausa
The following result plays a key role in our analysis.

\begin{teo}\label{cor: tang vs raro} Consider Notation \ref{notac2}
and let $V\in\C^{n\times r}$ and $\cV=R(V)$ be such that $\dim \cV=r\geq d$ and $\cX\cap R(V)^\perp=\{0\}$. 
 Then, for every unitary invariant norm $\|\cdot\|$ we have that
\begin{equation}\label{eq nui}
\|\tan(\Theta(\cX,\cV))\|
\leq
\|X_{d,\bot}^* V(X_d^*V)^\dagger\|\,.
\end{equation}
\noindent 
\end{teo}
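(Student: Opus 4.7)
The plan is to reduce to the case of an orthonormal basis of $\cV$ and then invoke the cited result of Zhu--Knyazev. Since $V\in\C^{n\times r}$ has $R(V)=\cV$ with $\dim\cV=r$, it has full column rank, so we can factor $V=UR$ where $U\in\C^{n\times r}$ has orthonormal columns with $R(U)=\cV$ and $R\in\C^{r\times r}$ is invertible. Set $C:=X_d^*U\in\C^{d\times r}$ and $T:=X_{d,\bot}^*U\in\C^{(n-d)\times r}$. The unitarity of $X$ together with $U^*U=I_r$ yields the key identity
\begin{equation}\label{eq plan key}
C^*C+T^*T=U^*XX^*U=I_r,
\end{equation}
while the hypothesis $\cX\cap\cV^\perp=\{0\}$ is easily seen to be equivalent to $C$ having full row rank $d$.

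Next I would compute the right-hand side of \eqref{eq nui} explicitly. Since $CR$ has full row rank $d$, its Moore--Penrose pseudo-inverse is $(CR)^\dagger=R^*C^*(CRR^*C^*)^{-1}$. Letting $S:=RR^*$, which is positive definite, a direct calculation gives
$$
X_{d,\bot}^*V(X_d^*V)^\dagger=TR\cdot R^*C^*(CSC^*)^{-1}=T\,M, \quad \text{where} \quad M:=SC^*(CSC^*)^{-1}.
$$
The crucial observation here is that $CM=I_d$, i.e., $M$ is a right inverse of $C$. On the other hand, applying \cite[Theorem 3.1]{KZ13} to the orthonormal matrix $U$ (which is the case $S=I$) gives that the singular values of $TC^\dagger=TC^*(CC^*)^{-1}$ are precisely $\tan\theta_i(\cX,\cV)$ for $i\in\I_d$, so $\|TC^\dagger\|=\|\tan(\Theta(\cX,\cV))\|$ for every u.i.n. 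Thus the inequality \eqref{eq nui} is equivalent to the statement $\|TC^\dagger\|\leq \|TM\|$.

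The heart of the proof is then the minimum-norm characterization of $C^\dagger$ as a right inverse of $C$. For any $z\in\C^d$ and any right inverse $M$ of $C$, the vector $y=Mz$ satisfies $Cy=z$, so \eqref{eq plan key} gives
$$\|Ty\|^2=\|y\|^2-\|Cy\|^2=\|Mz\|^2-\|z\|^2.$$
On the other hand, $C^\dagger z\in R(C^*)=(\ker C)^\perp$ is the minimum-Euclidean-norm solution of $Cy=z$, so $\|C^\dagger z\|\leq \|Mz\|$. Combining these two facts, $\|TC^\dagger z\|^2\leq \|TMz\|^2$ for every $z\in\C^d$, which means $(TC^\dagger)^*(TC^\dagger)\leq (TM)^*(TM)$ in the positive semidefinite order. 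Weyl's monotonicity principle then yields $s_i(TC^\dagger)\leq s_i(TM)$ for $i\in\I_d$, and since every u.i.n. is a monotone symmetric gauge function of the singular values, we conclude $\|TC^\dagger\|\leq \|TM\|$. I expect the only delicate step to be the algebraic simplification that brings $X_{d,\bot}^*V(X_d^*V)^\dagger$ into the form $T\cdot M$ with $M$ a right inverse of $C$; after that, everything follows cleanly from the min-norm property of $C^\dagger$ together with Weyl's inequality.
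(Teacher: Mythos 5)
Your proof is correct, and it takes a genuinely different route from the paper's. The paper actually establishes a stronger statement (Theorem \ref{teo: tang vs raro}): the same inequality for an arbitrary $V\in\C^{n\times r'}$ with range $\cV$, allowing rank-deficient $V$ and even $\cX\cap\cV^\perp\neq\{0\}$. To do so it performs a sequence of reductions (to full column rank, then to $\ker(X^*V)=\ker V$ via $\tilde V=VP_{\ker(X^*V)^\perp}$), takes the polar decomposition $V=U|V|$ and an SVD of $X^*U$, computes the action of $X_{d,\bot}^*V(X_d^*V)^\dagger$ on the right singular vectors explicitly, and finishes with the variational characterization of singular values. You instead exploit the hypothesis $\dim R(V)=r$ to write $V=UR$ with $R$ invertible, reduce the right-hand side to $TM$ with $M=SC^*(CSC^*)^{-1}$ a right inverse of $C=X_d^*U$, and derive the operator inequality $(TC^\dagger)^*(TC^\dagger)\preceq(TM)^*(TM)$ from the identity $\|Ty\|^2=\|y\|^2-\|Cy\|^2$ (a consequence of $C^*C+T^*T=I_r$) together with the minimum-norm property of $C^\dagger$; Weyl monotonicity and Zhu--Knyazev then finish the argument. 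Each step checks out: $C$ has full row rank precisely because $\cX\cap\cV^\perp=\{0\}$, the pseudo-inverse formula $(CR)^\dagger=R^*C^*(CSC^*)^{-1}$ is valid for full-row-rank matrices, and singular-value-wise domination implies domination in every unitarily invariant norm. Your route is shorter and more conceptual for the statement as given, and like the paper's it yields the stronger entrywise inequality $s_j(\tan\Theta(\cX,\cV))\leq s_j(X_{d,\bot}^*V(X_d^*V)^\dagger)$; its one limitation is that the invertibility of $R$ is essential, so it would not directly cover the rank-deficient situation (e.g. $V=\phi(A)WZ$ with only $\rk\geq d$ known) in which the paper later needs the general Theorem \ref{teo: tang vs raro} -- for that one would have to prepend the paper's reduction steps to your argument.
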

\proof See Section \ref{subsec angles}.
\qed

\pausa
We remark that the inequalities in Eq. \eqref{eq nui} can be strict, even when $\dim R(V)=r$ (i.e. when $V\in\C^{n\times r}$ has linearly independent columns); see Section \ref{subsec angles}. Using Theorem \ref{cor: tang vs raro} we can deduce the following 

\begin{teo}\label{teo cota con polinomio}
    Consider Notation \ref{notac2}. Let $\cV\subset \C^n$ with $\dim\cV=r\geq d$,   
		 fix $d\leq p\leq r$ and assume that $\text{span}\{x_1,\ldots,x_p\}\cap \cV^\perp=\{0\}$. Then, there exists $\cH_p\subset \cV$ with $\dim\cH_p=d+r-p$ such that: $\cX\cap \cH_p^\perp=\{0\}$ and for every polynomial $\phi\in \C[x]$ such that $\phi(\Lambda_d)$ is invertible and for every unitarily invariant norm $\|\cdot\|$ we have that
		\begin{equation}\label{eq nui 2}
        \|\tan(\Theta(\cX,\, \phi(A)(\cV) ))\|
        \leq
         \|\phi(\Lambda_d)^{-1}\|_2\ \|\phi(\Lambda_{p,\perp})\|_2
        \;\| \tan(\Theta(\cX,\, \cH_p))\|
        \,.
    \end{equation}
\end{teo}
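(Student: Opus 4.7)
My plan is to construct $\cH_p$ explicitly as an intersection and then chain together four ingredients: monotonicity of principal angles (Proposition~\ref{pro monotonia de angulos}), Theorem~\ref{cor: tang vs raro}, the spectral factorization $\phi(A)=X\phi(\Lambda)X^*$ together with the pseudo-inverse product identity, and the block-structure invariance of unitarily invariant norms. First I would set $\cH_p := \cV \cap \text{span}\{x_{d+1},\ldots,x_p\}^\perp$. Since $\text{span}\{x_{d+1},\ldots,x_p\}\subseteq \text{span}\{x_1,\ldots,x_p\}$, the hypothesis forces $\cV^\perp\cap \text{span}\{x_{d+1},\ldots,x_p\}=\{0\}$, and the dimension formula then yields $\dim \cH_p = r+d-p$. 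For transversality, if $z = v+w \in \cX$ with $v\in \cV^\perp$ and $w\in \text{span}\{x_{d+1},\ldots,x_p\}$, then $v = z-w\in \text{span}\{x_1,\ldots,x_p\}\cap \cV^\perp=\{0\}$, whence $z=w \in \cX \cap \text{span}\{x_{d+1},\ldots,x_p\}=\{0\}$; this gives $\cX \cap \cH_p^\perp=\{0\}$.

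Next I would fix a matrix $H_p\in\C^{n\times (d+r-p)}$ with orthonormal columns spanning $\cH_p$. Since $\phi(\Lambda_d)$ is invertible and $X_d^*H_p$ has rank $d$, the matrix $X_d^*\phi(A)H_p=\phi(\Lambda_d)X_d^*H_p$ has rank $d$, so $\dim \phi(A)(\cH_p)\geq d$. Monotonicity of principal angles applied to $\phi(A)(\cH_p)\subseteq \phi(A)(\cV)$ and then Theorem~\ref{cor: tang vs raro} applied with $V=\phi(A)H_p$ yield
\[
\|\tan\Theta(\cX,\phi(A)(\cV))\| \leq \|X_{d,\bot}^*\phi(A)H_p\,(X_d^*\phi(A)H_p)^\dagger\|.
\]
Because $\phi(\Lambda_d)$ is invertible and $X_d^*H_p$ has full row rank, the product rule $(CB)^\dagger=B^\dagger C^{-1}$ applies with $C=\phi(\Lambda_d)$ and $B=X_d^*H_p$, so the right-hand side equals $\|\phi(\Lambda_{d,\bot})X_{d,\bot}^*H_p(X_d^*H_p)^\dagger\phi(\Lambda_d)^{-1}\|$.

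The final step exploits the defining property of $\cH_p$. Split the columns of $X_{d,\bot}$ into the intermediate block $Y:=[x_{d+1}\,\cdots\,x_p]$ and the tail $X_{p,\bot}$, with $\phi(\Lambda_{d,\bot})=\phi(\Lambda_{d:p})\oplus \phi(\Lambda_{p,\bot})$ split accordingly. Since $\cH_p\subseteq R(Y)^\perp$, we have $Y^*H_p=0$, and so
\[
\phi(\Lambda_{d,\bot})X_{d,\bot}^*H_p(X_d^*H_p)^\dagger = \begin{bmatrix} 0 \\ \phi(\Lambda_{p,\bot})X_{p,\bot}^*H_p(X_d^*H_p)^\dagger \end{bmatrix}.
\]
Using that every unitarily invariant norm is insensitive to zero block rows and satisfies $\|AB\|\leq \|A\|_2\|B\|$ and $\|AB\|\leq \|A\|\|B\|_2$, I would pull out the operator-norm factors to obtain
\[
\|\tan\Theta(\cX,\phi(A)(\cV))\|\leq \|\phi(\Lambda_d)^{-1}\|_2\|\phi(\Lambda_{p,\bot})\|_2\,\|X_{d,\bot}^*H_p(X_d^*H_p)^\dagger\|.
\]
Since $H_p$ has orthonormal columns, the Zhu--Knyazev identity recalled in the paper immediately before Theorem~\ref{cor: tang vs raro} gives the exact equality $\|X_{d,\bot}^*H_p(X_d^*H_p)^\dagger\|=\|\tan\Theta(\cX,\cH_p)\|$, closing the chain.

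I expect the main obstacle to lie in simultaneously juggling the pseudo-inverse product rule, the intermediate/tail split of $X_{d,\bot}$, and the equality case of Theorem~\ref{cor: tang vs raro}. In particular, the very last step would fail if $H_p$ were not chosen orthonormal, since then Theorem~\ref{cor: tang vs raro} only bounds the tangent from above by the matrix norm, running in the wrong direction. A secondary subtlety is that the dimension $\dim\cH_p=d+r-p$ genuinely uses the full hypothesis $\text{span}\{x_1,\ldots,x_p\}\cap\cV^\perp=\{0\}$, not merely the weaker $\cX\cap\cV^\perp=\{0\}$, and that $\cH_p$ must be defined independently of $\phi$ so that the estimate in Eq.~\eqref{eq nui 2} holds uniformly over all admissible polynomials $\phi$.
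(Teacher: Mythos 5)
Your proposal is correct and follows essentially the same route as the paper's proof: the subspace you define, $\cH_p=\cV\cap\text{span}\{x_{d+1},\ldots,x_p\}^\perp$, coincides with the paper's $R(WZ)$ (where $Z$ spans $\ker([x_{d+1}\cdots x_p]^*W)$), and the subsequent chain --- monotonicity, Theorem \ref{cor: tang vs raro}, the factorization $\phi(A)=X\phi(\Lambda)X^*$ with the pseudo-inverse product rule, the zero-block splitting of $\phi(\Lambda_{d,\bot})X_{d,\bot}^*H_p$, sub-multiplicativity, and the Zhu--Knyazev identity --- is exactly the paper's argument. Your direct dimension-formula and transversality computations are a slightly cleaner packaging of the paper's rank counts, but not a different proof.
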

\proof See Section \ref{subsec algos}.
\qed

\pausa We point out that the hypothesis $\text{span}\{x_1,\ldots,x_p\}\cap \cV^\perp=\{0\}$ holds in a generic case. On the other hand, the condition $\cX\cap \cH_p^\perp=\{0\}$  implies that the tangents of the principal angles $\Theta(\cX,\cH_p)$, that appear in the right-hand side of Eq. \eqref{eq nui 2}, are well defined.

\pausa
The proof of Theorem \ref{teo cota con polinomio} partially relies on 
an strategy to amplify singular gaps by considering a 
convenient subspace of $\cV$, first developed by Gu \cite{Gu} (see also \cite{WWZ}). 
The proofs of Theorems \ref{teo main conv vt} and \ref{teo main Kt} follow from Theorem \ref{teo cota con polinomio} together with convenient choices of polynomials (depending on the algorithm under consideration).  The following result provides such convenient choices.

\begin{pro}\label{pro estimac poly main}
Consider Notation \ref{notac2}, let $1\leq t$ and $d\leq p\leq n-1$.
\begin{enumerate}
\item Set $\phi(x)=x-(\la_{d+1}+\la_n)/2$. Then 
$$\|\phi(\Lambda_d)^{-1}\|_2\,\|\phi(\Lambda_{d,\bot})\|_2 =    \frac{\lambda_{d+1}-\lambda_n}{(\lambda_d-\lambda_n)+(\lambda_d-\lambda_{d+1})}<1\,.$$
\item Let $T_t(x)$ denote the Chebyshev polynomial of the first kind of degree $t\geq 1$ and set $\phi_t(x)=T_t((x-\la_n)/(\la_{p+1}-\la_n))$. Then 
$$
\|\phi_t(\Lambda_d)^{-1}\|_2 \; \|\phi_t(\Lambda_{p,\perp})\|_2 \ 
\leq 
4\, \frac{\la_{p+1}-\la_n}{\lambda_d-\la_n} \ 3^{-t\cdot\min\left\{\sqrt{\frac{\la_d-\la_n}{\la_{p+1}-\la_n} \,-\,1}\,,\,1\right\}}\,.
$$
\end{enumerate}
\end{pro}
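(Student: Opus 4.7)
The plan is to reduce both parts to direct spectral computations, exploiting that $\Lambda_d$, $\Lambda_{d,\perp}$, and $\Lambda_{p,\perp}$ are diagonal: for any polynomial $\psi\in\C[x]$, the matrix $\psi(\Lambda_d)$ is diagonal with entries $\psi(\lambda_i)$, and (when invertible) its inverse is diagonal with entries $\psi(\lambda_i)^{-1}$, so the relevant operator norms reduce to maxima of $|\psi(\lambda_i)|$ or $|\psi(\lambda_i)|^{-1}$ over the appropriate index range.

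For the first item, set $c=(\lambda_{d+1}+\lambda_n)/2$, the midpoint of $[\lambda_n,\lambda_{d+1}]$. For $i\le d$, using $\lambda_i\ge\lambda_d>\lambda_{d+1}$, one has $\lambda_i-c\ge\lambda_d-c=\tfrac12\bigl((\lambda_d-\lambda_{d+1})+(\lambda_d-\lambda_n)\bigr)>0$, giving $\|\phi(\Lambda_d)^{-1}\|_2=2/\bigl((\lambda_d-\lambda_{d+1})+(\lambda_d-\lambda_n)\bigr)$. For $i\in\{d+1,\dots,n\}$, $\lambda_i-c$ lies in $[-(\lambda_{d+1}-\lambda_n)/2,(\lambda_{d+1}-\lambda_n)/2]$, with equality at the endpoints $i=d+1$ and $i=n$, so $\|\phi(\Lambda_{d,\perp})\|_2=(\lambda_{d+1}-\lambda_n)/2$. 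Multiplying yields the stated identity, and the strict inequality with $1$ follows because $\lambda_d-\lambda_{d+1}>0$ makes the numerator strictly smaller than the denominator.

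For the second item, introduce $\gamma=(\lambda_d-\lambda_n)/(\lambda_{p+1}-\lambda_n)$, which satisfies $\gamma\ge 1$ because $d\le p$ forces $\lambda_d\ge\lambda_{p+1}$; note $\gamma-1$ equals the quantity under the square root in the statement. For $i>p$, the Chebyshev argument $(\lambda_i-\lambda_n)/(\lambda_{p+1}-\lambda_n)$ lies in $[0,1]\subset[-1,1]$, so $|T_t|\le 1$ on this set and $\|\phi_t(\Lambda_{p,\perp})\|_2\le 1$. For $i\le d$ the argument is $\ge\gamma\ge 1$, and since $T_t$ is positive and strictly increasing on $[1,\infty)$, $\|\phi_t(\Lambda_d)^{-1}\|_2=1/T_t(\gamma)$. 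Thus the product is bounded by $1/T_t(\gamma)$, and it remains to show $1/T_t(\gamma)\le (4/\gamma)\,3^{-t\min\{\sqrt{\gamma-1},\,1\}}$.

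To finish, I would invoke the standard lower bound $T_t(\gamma)=\cosh(t\operatorname{arccosh}\gamma)\ge\tfrac12(\gamma+\sqrt{\gamma^2-1})^t$ for $\gamma\ge 1$, and split according to whether $\gamma\ge 2$ or $\gamma\in[1,2]$ (the two regimes of the $\min$). In the first subcase the minimum equals $1$; from $\sqrt{\gamma^2-1}\ge\gamma-1$ I get $\gamma+\sqrt{\gamma^2-1}\ge 2\gamma-1\ge\tfrac{3}{2}\gamma$, whence $T_t(\gamma)\ge\tfrac12(3\gamma/2)^t=\tfrac{3^t}{2}(\gamma/2)^t\ge 3^t\gamma/4$ since $\gamma/2\ge 1$ and $t\ge 1$, which is exactly the desired bound. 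The delicate subcase is $\gamma\in[1,2]$, where I need $\operatorname{arccosh}(\gamma)\ge\sqrt{\gamma-1}\,\ln 3$; setting $g(\gamma)=\operatorname{arccosh}(\gamma)-\sqrt{\gamma-1}\ln 3$, one has $g(1)=0$ and $g'(\gamma)=\frac{1}{\sqrt{\gamma-1}}\bigl(\tfrac{1}{\sqrt{\gamma+1}}-\tfrac{\ln 3}{2}\bigr)$, which is nonnegative on $[1,2]$ because $\sqrt{\gamma+1}\le\sqrt{3}<2/\ln 3$. Exponentiating yields $(\gamma+\sqrt{\gamma^2-1})^t\ge 3^{t\sqrt{\gamma-1}}$, hence $T_t(\gamma)\ge\tfrac12\,3^{t\sqrt{\gamma-1}}$, and since $\gamma\le 2$ the factor $4/\gamma\ge 2$ absorbs the constant $1/2$. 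The main obstacle is this arccosh vs. square-root comparison on $[1,2]$: it is the single non-routine inequality and is precisely what pins down the constant $3$ appearing as the base of the exponential in the final estimate.
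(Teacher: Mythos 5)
Your proof is correct and follows essentially the same route as the paper's: both items reduce to maxima/minima of $|\phi(\lambda_i)|$ over the diagonal entries, and item 2 rests on the same pointwise bound $T_t(x)\ge\tfrac14\,x\,3^{t\min\{\sqrt{x-1},\,1\}}$ obtained by splitting at $x=2$. The only immaterial differences are that the paper additionally proves the shift $(\lambda_{d+1}+\lambda_n)/2$ is \emph{optimal} among all shifts (via a separate lemma, more than the statement requires), and it establishes $x+\sqrt{x^2-1}\ge 3^{\sqrt{x-1}}$ on $[1,2]$ through the substitution $u=\sqrt{x-1}$ and a convexity estimate rather than your $\operatorname{arccosh}$ monotonicity argument.
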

\proof
See Sections \ref{subsec algos} and \ref{subsec cheby}.
\QED

\pausa
We can now present the following

\begin{proof}[Proof of Theorem \ref{teo main conv vt}] We consider Notation \ref{notac2} and let
$\{\cV_t\}_{t=0}^q$ be constructed as in Algorithm \ref{algo1}. We assume further that $\cX\cap \cV^\perp=\{0\}$ and consider a unitarily invariant norm $\|\cdot \|$ on $\C^{n\times n}$.

\pausa
We fix $1\leq t\leq q$ and set $\phi(x)=x-s$, for $s\notin \{\lambda_1,\ldots,\lambda_d\}$. 
Notice that $\phi(A)(\cV_{t-1}) \subseteq \cV_{t-1}+A(\cV_{t-1})$; recall that the subspace $\cV_t$ satisfies that $\Theta(\cX,\,\cV_t) = \Theta(\cX,\,\cV_{t-1}+A(\cV_{t-1}))$. Combining these facts with item 1. in Proposition \ref{pro monotonia de angulos} we obtain that 
    \[
    \|\tan(\Theta(\cX,\,\cV_t))\|
    =
    \|\tan(\Theta(\cX,\,\cV_{t-1}+A(\cV_{t-1})))\|
    \leq
    \|\tan(\Theta(\cX,\,\phi(A)(\cV_{t-1})))\|
    \,.
    \]
    \noindent 
    
    \noindent Since $\cV_0\subseteq\cV_{t-1}$, the condition  $\cX\cap\cV_0^\bot=\cX\cap\cV^\bot=\{0\}$ implies that $\cX\cap\cV_{t-1}^\bot=\{0\}$. Hence, we can apply Theorem \ref{teo cota con polinomio} with the subspace $\cV_{t-1}$ and $p=d$ (so that $\cH_d=\cV_{t-1}$) and get that 
    \[
    \|\tan(\Theta(\cX,\,\phi(A)(\cV_{t-1})))\|
    \leq
\|\phi(\Lambda_d)^{-1}\|_2 \    \|\phi(\Lambda_{d,\perp})\|_2 \ 
        \| \tan(\Theta(\cX, \cV_{t-1}))\| 
    \,. \]
  Taking into account the previous two inequalities and choosing $s=(\lambda_{d+1}+\lambda_n)/2$  then, by 
 item 1. in Proposition \ref{pro estimac poly main} we get that		
    \begin{align*}
        \|\tan(\Theta(\cX,\,\cV_t))\|
        &\leq
\frac{\lambda_{d+1}-\lambda_n}{(\lambda_d-\lambda_n)+(\lambda_d-\lambda_{d+1})}
        \;\| \tan(\Theta(\cX,\, \cV_{t-1}))\|\,.
    \end{align*}

    \noindent Eq. \eqref{eq hay conv1} follows by applying iteratively the previous estimation. 
\end{proof}

\begin{proof}[Proof of Theorem \ref{teo main Kt}]
Consider Notation \ref{notac2} and let $\cV\subset \C^n$ be such that $\dim \cV=r\geq d$ and $\text{span}\{x_1,\ldots,x_r\}\cap \cV^\perp=\{0\}$. Let $\{\cK_t\}_{t=0}^q$ be the finite sequence of block Krylov subspaces 
induced by $\cV=\cK_0$.  By Theorem \ref{teo cota con polinomio} there is a subspace $\cH_p\subset \cV$ with $\dim\cH_p=d+r-p\geq d$, $\cX\cap \cH_p^\perp=\{0\}$ and such that 
    \[
    \|\tan(\Theta(\cX,\,\phi(A)(\cV)))\|
    \leq
     \|\phi(\Lambda_d)^{-1}\|_2\ \|\phi(\Lambda_{p,\perp})\|_2 
    \;\| \tan(\Theta(\cX,\, \cH_p))\|
    \,.
    \]
		    If we let $\phi(x)\in \C[x]$ be a polynomial of degree at most $t$ then
we get that $\phi(A)(\cV)\subset \cK_t$; this last fact, together with item 1. in
				 Proposition \ref{pro monotonia de angulos} show that
    \[
    \|\tan(\Theta(\cX,\,\cK_t))\|
    \leq
    \|\tan(\Theta(\cX,\,\phi(A)(\cV)))\|
    \leq
     \|\phi(\Lambda_d)^{-1}\|_2\ \|\phi(\Lambda_{p,\perp})\|_2 
    \;\| \tan(\Theta(\cX,\, \cH_p))\|\,.
    \]
The result now follows from the previous facts and item 2. in Proposition \ref{pro estimac poly main}.
\end{proof}

\section{Numerical examples}\label{sec num exa}

In this section we consider some numerical examples related to the results in Section \ref{sec main results general}.
These have been performed in Python (version 3.8.1) mainly using numpy (version 1.24.4) and scipy (version 1.10.1) packages with machine precision of $2.22\times10^{-16}$. 
We first describe the different matrices and parameters considered in our examples. We then elaborate on different aspects of 
the subspace approximations derived from Algorithm \ref{algo1} (optimal theoretical subspace expansion)
and its numerical counterpart, namely Algorithm \ref{algo2} (computable subspace expansion) for the Rayleigh-Ritz (RR) and refined Rayleigh-Ritz (refRR) methods. We have also included implementations of the block Krylov method and the numerical methods described in Remark \ref{rem comparacion}. Finally, we also show plots of the upper bounds derived from Theorems \ref{teo main conv vt} and \ref{teo main Kt}.

\pausa
Throughout this section we keep using the notation of Section \ref{sec main results general}. In particular, $d$ stands for the dimension of the target space $\cX\subseteq\mathbb{C}^n$, $r\geq d$ stands for the dimension of the starting guess subspace $\cV\subseteq\mathbb{C}^n$ and $d\leq p\leq r$ stands for the parameter considered Theorem \ref{teo main Kt}. Indeed, the subspaces $\cV$ were constructed as the range of matrices $V\in \mathbb R^{n\times r}$ drawn from an $n\times r$ standard Gaussian random matrix, with $r$ according to the cases described below. 
We notice that the upper bounds obtained in this work depend on expressions of the form $\|\tan(\Theta(\cX,\,\cV))\|$. It will be shown in Theorem \ref{teo: tang vs raro} that this expression is bounded by $\| X_{\perp}^* V \, (X^*\,V)^\dagger\|$ where $X$ has orthonormal columns that span $\cX$ and $X_\perp$ has orthonormal columns that span $\cX^\bot$ (since the condition $\dim X^*(\cV)=d$ holds with probability 1).
It is known that the last expression can be controlled with high probability for the spectral and Frobenius norms (see for example \cite{Gu, Saibaba}) although we have not followed that research direction in this notes. 

\pausa
In all the examples shown below we have set $n=5000$ and the target dimension $d$ to 5, but we have also obtained similar results in the case where ($n=5000$ and) $d$ is set to $15$. We have set $60$ as the standard number of iterations for our experiments, but our implementations of the Algorithm \ref{algo2} for the RR projection method, and the numerical algorithms from Remark \ref{rem comparacion} are considerably faster than our implementations of the block Krylov algorithm and Algorithm \ref{algo2} for the ref RR projection method for this value of $q$ (specially for higher values of the parameter $r$) and thus we show these faster methods with higher number of iterations. 

\pausa
{\bf Test matrices:} In all the examples considered below we have set $A$ to be a square diagonal matrix of size $5000$, where the diagonal entries are non negative and show different kinds of decays. For any of the (baseline) decays, we exhibit the constant from Theorem \ref{teo main conv vt}, namely
\[
\mu_d
:=
\frac{\lambda_{d+1}-\lambda_n}{(\lambda_d-\lambda_n)+(\lambda_d-\lambda_{d+1})}<1\,,
\]
 which appears in our upper bound for the decay of the angles $\Theta(\cX,\,\cV_t)$ as a function of $t$. 

\pausa For the numerical experiments, we have considered the following:

\pausa 
1. {\bf Linear decay}. Our starting linear model is given by $A_{ii}=3000-\frac{3}{5}i$, for $1\leq i\leq 5000$. In this case $\mu_d\approx 0.9996$.  We point out that although simple, the linear decay is a rather challenging model.
In this setting we have considered different amounts of oversampling: $r=60$, $p=40$ (large); $r=30$, $p=20$ (moderate); $r=20$, $p=15$ (small). 
We also show variations by forcing a bigger singular gap at the prescribed index $d$. This was done by redefining: $A_{ii}=A_{ii}+G$ for $1\leq i\leq d$ and some $G\geq 0$.
For this model, we have set $G=0$ (baseline); $G=10$; $G=40$ and $G=70$. For these values of $G$, $\mu_d$ goes down to approximately $0.9929$, $0.9736$ and $0.955$ respectively.

\pausa
2. {\bf Ellipsoidal decay}. We considered 
$A_{ii}= b (1- (a + c (\frac{i}{n})^2) ) ^{1/2} - H$, for $1\leq i\leq 5000$.
Here the parameters were chosen so that $\mu_d$ remains close to 1 but the singular values are not too far apart. For the parameters that produced the plot shown in Figure \ref{fig: perfiles} we have $\mu_d\approx 0.9997$ and for the biggest value of $G$ considered we have $\mu_d\approx0.9872$. We experimented with different oversampling sizes and forced singular gaps in a similar manner to that of the linear decay. Since this model seems even more challenging than the previous one, our choices for the values $r$ and $p$ are generally bigger. Also, since the entries $A_{ii}$ are much smaller, the choices for $G$ are quite smaller.

\pausa
3. {\bf Polynomial decay}. Finally, we considered 
$A_{ii}= c / ((i+s)^{1/2} + m)$, for $1\leq i\leq 5000$ where  $c=1000$, $m=50$ and $s=200$, which gives $\mu_d\approx 0.9977$.
For this model we have also experimented with different amounts of oversampling and forced singular gaps and observed similar behaviors to those of the previous models, so we only report on two experiments where we have set $r=30$ and $p=20$ and $G=0$.

\begin{figure}[hb]
     \centering
     \includegraphics[width=\textwidth]{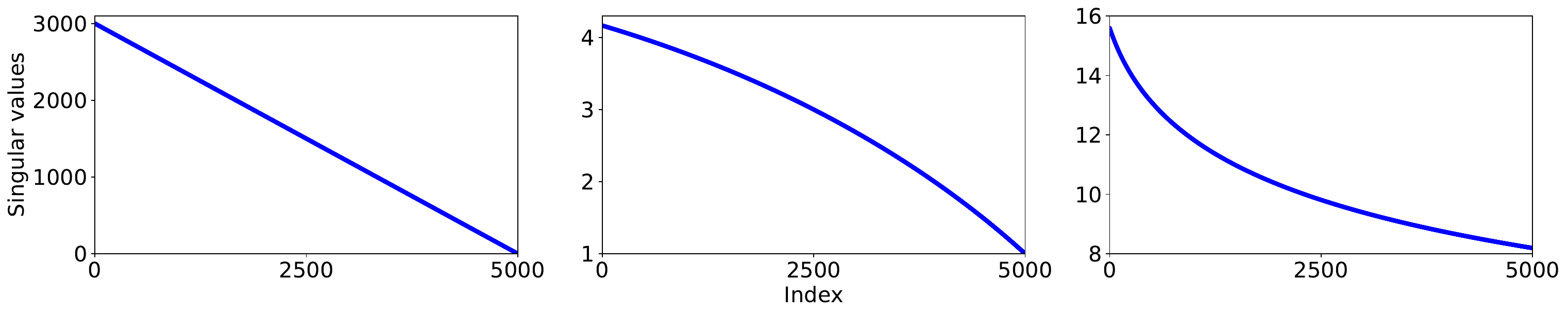}
        \caption{Plots of the different (baseline) diagonals considered for the test matrices.}
    \label{fig: perfiles}
\end{figure}

\pausa
{\bf Numerical results}. Given $A$ and a starting matrix $V$ as above with $R(V)=\cV$, we have implemented
the block Krylov algorithm for the construction of the subspaces $\cK_t=\cK_t(A,X)$ given recursively by:
$\cK_0=\cV$ and $\cK_{t}=\cK_{t-1} + A(\cK_{t-1})$, for $t\geq 1$. Similarly, we have implemented 
Algorithm \ref{algo1} for the computation of theoretical optimal expansions. We have also 
implemented Algorithm \ref{algo2} for the computation of the numerical counterparts 
based on the Rayleigh-Ritz (RR with $\cS$), refined Rayleigh-Ritz (refRR with $\cS$) methods; the extension of Jia's expansion method based on the Rayleigh-Ritz (RR with $\cR$) method and the Residual Arnoldi (RA) method from Remark \ref{rem comparacion} \cite{J94,J97,J98,Jia04,Jia22,SJia01,Lee07,LeeS07}.

In the following figures we show the decay of the biggest angle between the subspaces produced by the algorithms mentioned above and the target subspace $\cX$ (we use the labels $\theta(\cX,\,\cK_q)$ and $\theta(\cX,\,\cV_q)$). Notice that in all our test matrices the subspace $\cX$ is the subspace generated by the first $d$ vectors of the canonical basis. We also show upper bounds for those angles derived from Theorem \ref{teo main Kt} and Theorem \ref{teo main conv vt} by taking the unitarily invariant norm in those statements as the operator norm (we use the labels UB on $\theta(\cX,\,\cK_q)$ and UB on $\theta(\cX,\,\cV_q)$ respectively).
We have considered first the linear decay models (Figure \ref{fig: modelos lineales}), followed by the elliptical decay models (Figure \ref{fig: modelos elipticos}) and lastly, we consider two examples for the better-behaved polynomial decay models (Figure \ref{fig: modelos polinomicos}) accompanied with a graph that shows the dimensions of the subspaces $\cK_t$ and $\cV_t$ as a function of $t$ for two values of the parameter $r$ that we have used in the previous experiments.

\begin{figure}[ht!]

    \centering
    \begin{subfigure}[b]{0.325\textwidth}
         \includegraphics[width=\textwidth]{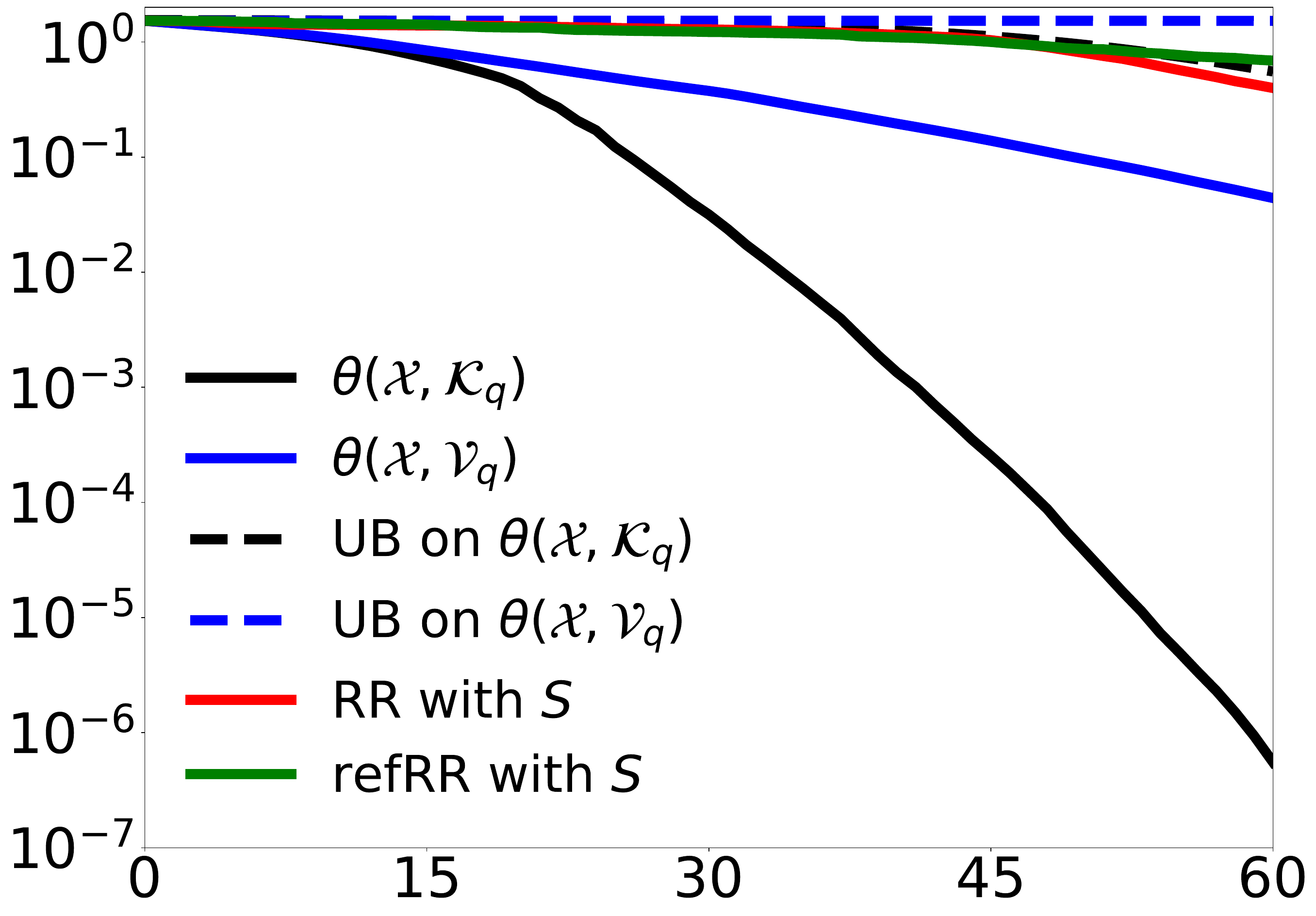}
         \caption{$r=30$, $p=20$ and $G=0$}
         \label{fig: lineal plano}
    \end{subfigure}
    \hfill
    \begin{subfigure}[b]{0.325\textwidth}
        \includegraphics[width=\textwidth]{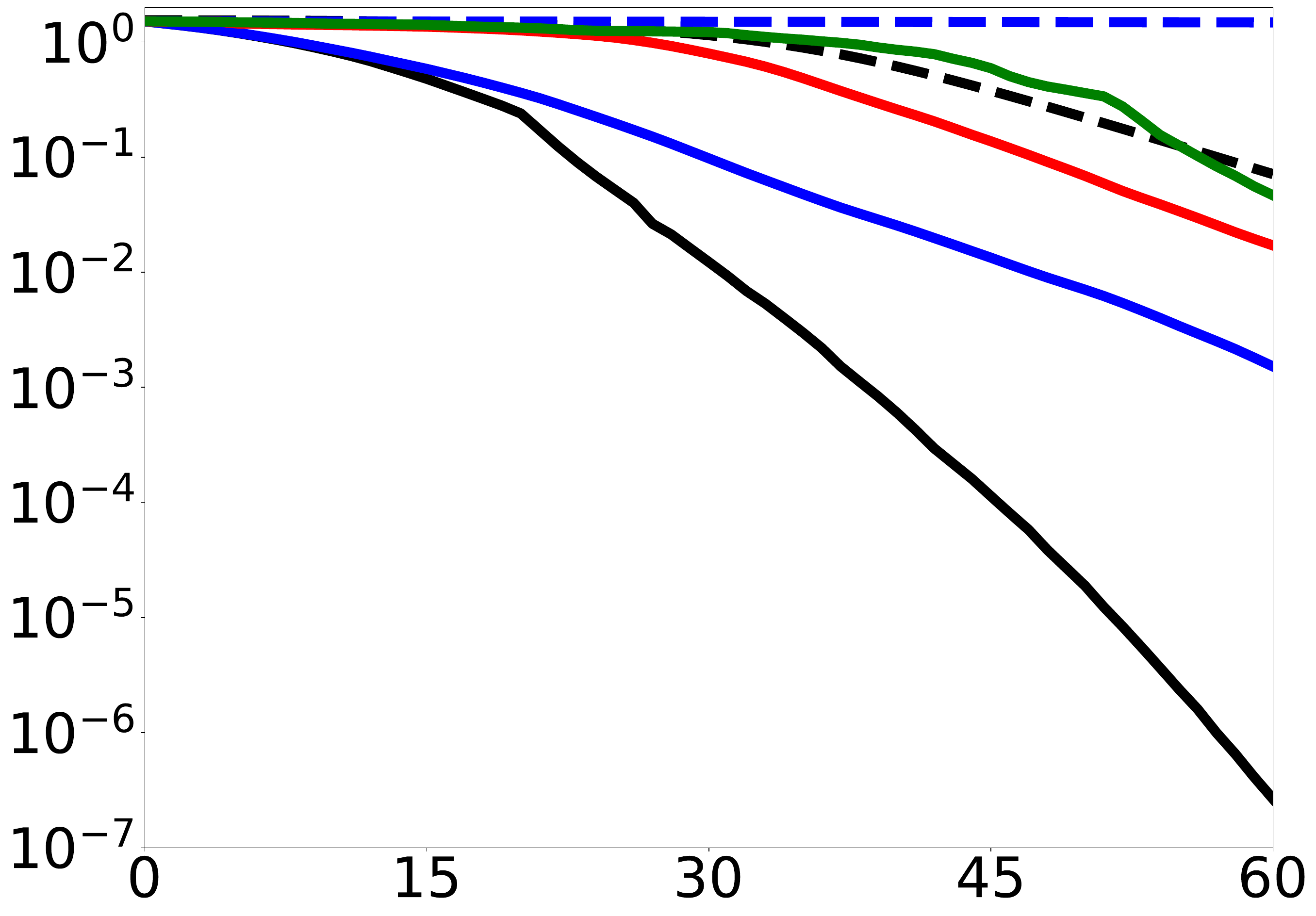}
        \caption{$r=30$, $p=20$ and $G=10$}
        \label{fig: lineal poco gap}
    \end{subfigure}
    \hfill
    \begin{subfigure}[b]{0.325\textwidth}
         \includegraphics[width=\textwidth]{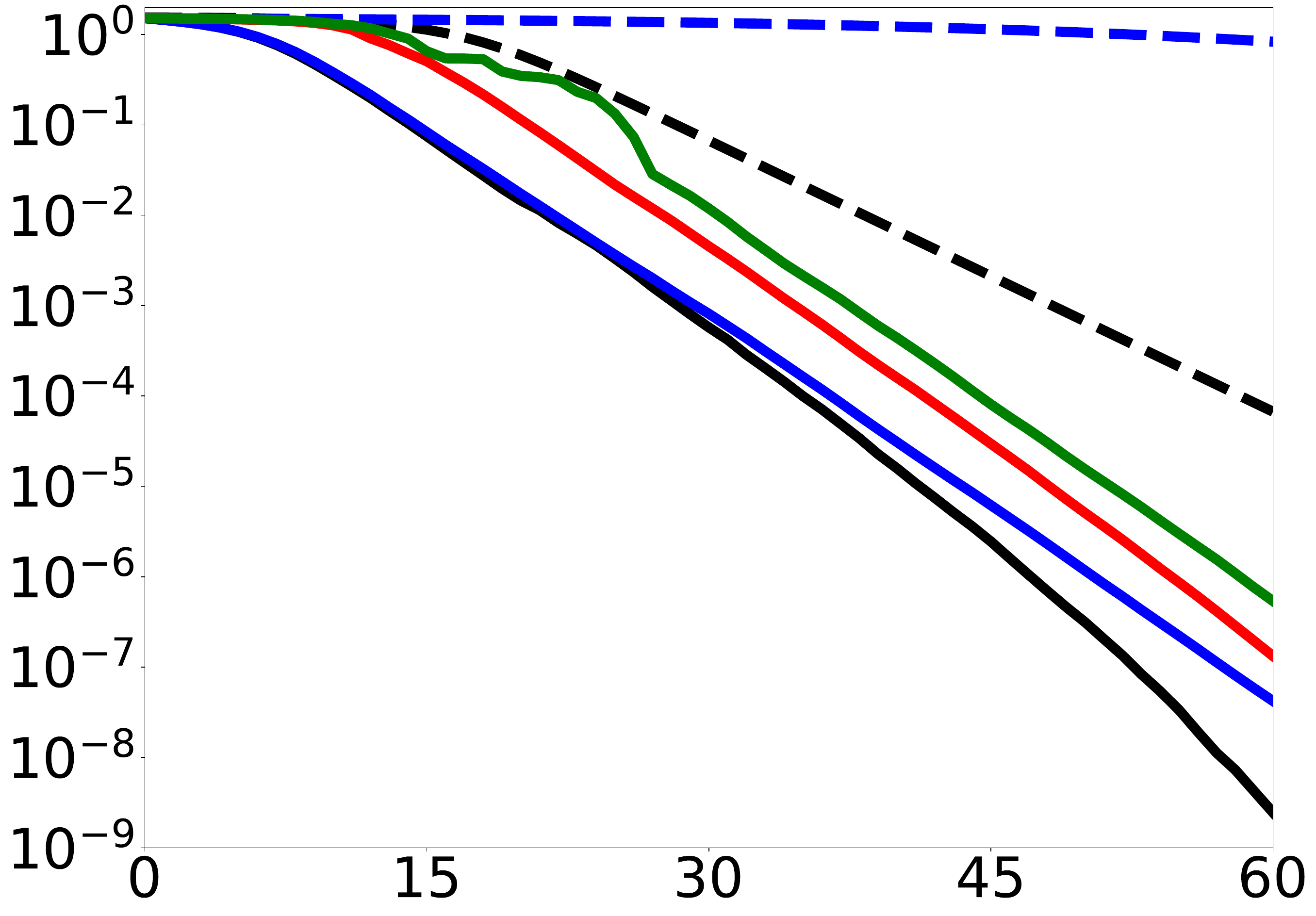}
         \caption{$r=30$, $p=20$ and $G=70$}
         \label{fig: lineal mucho gap}
    \end{subfigure}
        \vspace{0.5cm}
        \begin{subfigure}[b]{0.325\textwidth}
        \includegraphics[width=\textwidth]{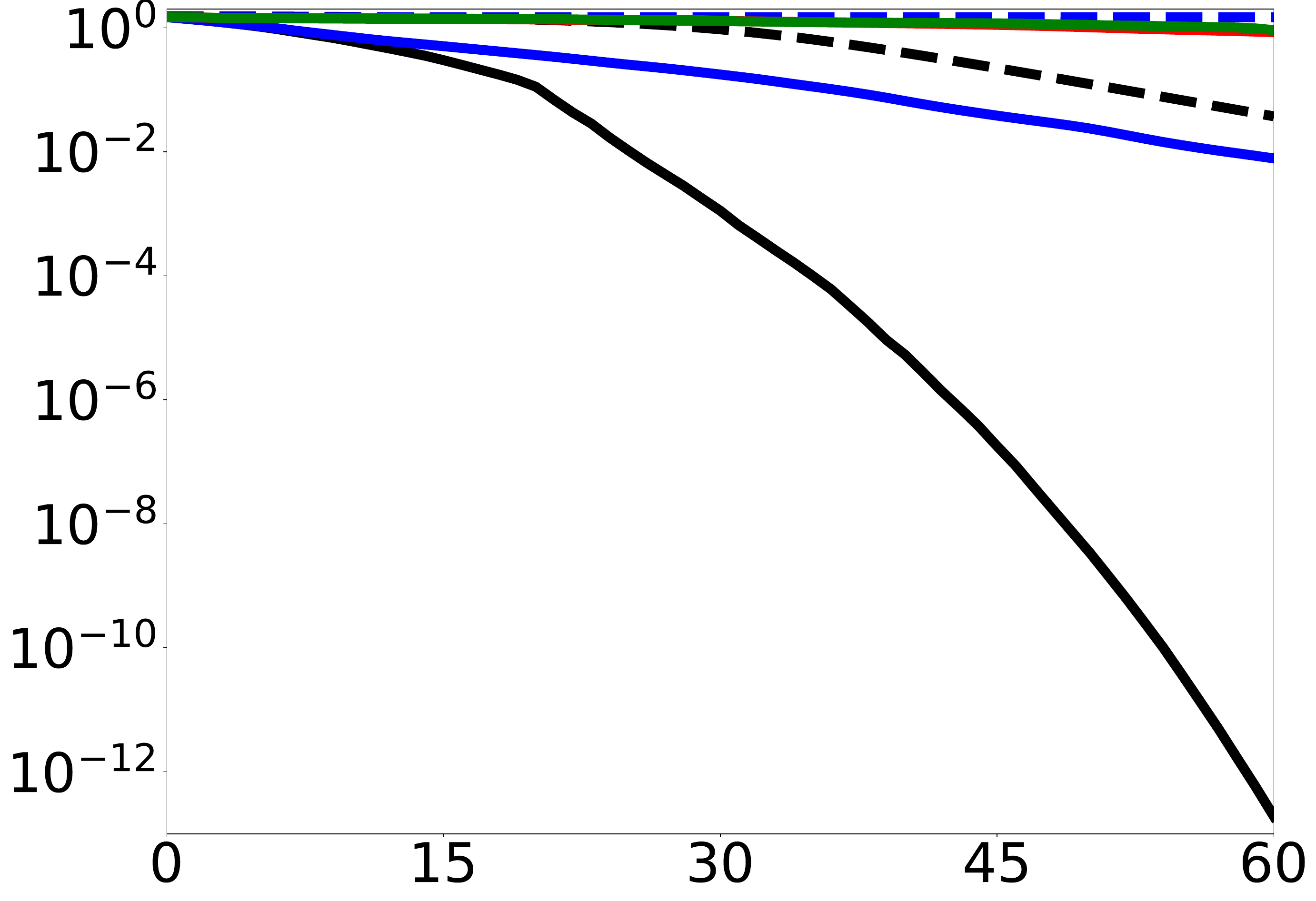}
        \caption{$r=60$, $p=40$ and $G=0$}
        \label{fig: lineal plano mucho OV}
    \end{subfigure}
        \hfill
        \begin{subfigure}[b]{0.325\textwidth}
        \includegraphics[width=\textwidth]{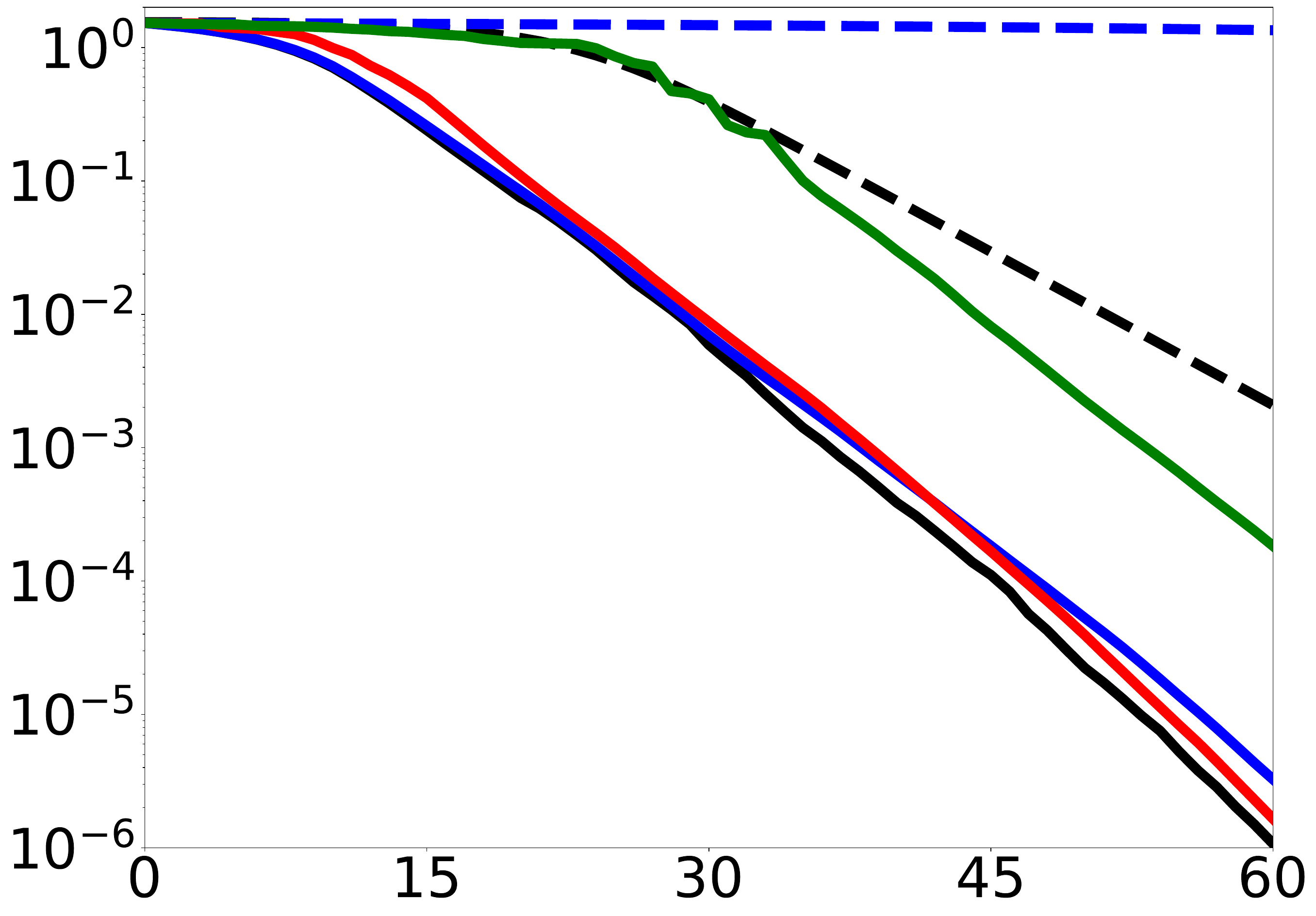}
        \caption{$r=20$, $p=15$ and $G=40$}
        \label{fig: lineal plano poco OS y gap}
    \end{subfigure}
       \hfill
   	\begin{subfigure}[b]{0.325\textwidth}
        \includegraphics[width=\textwidth]{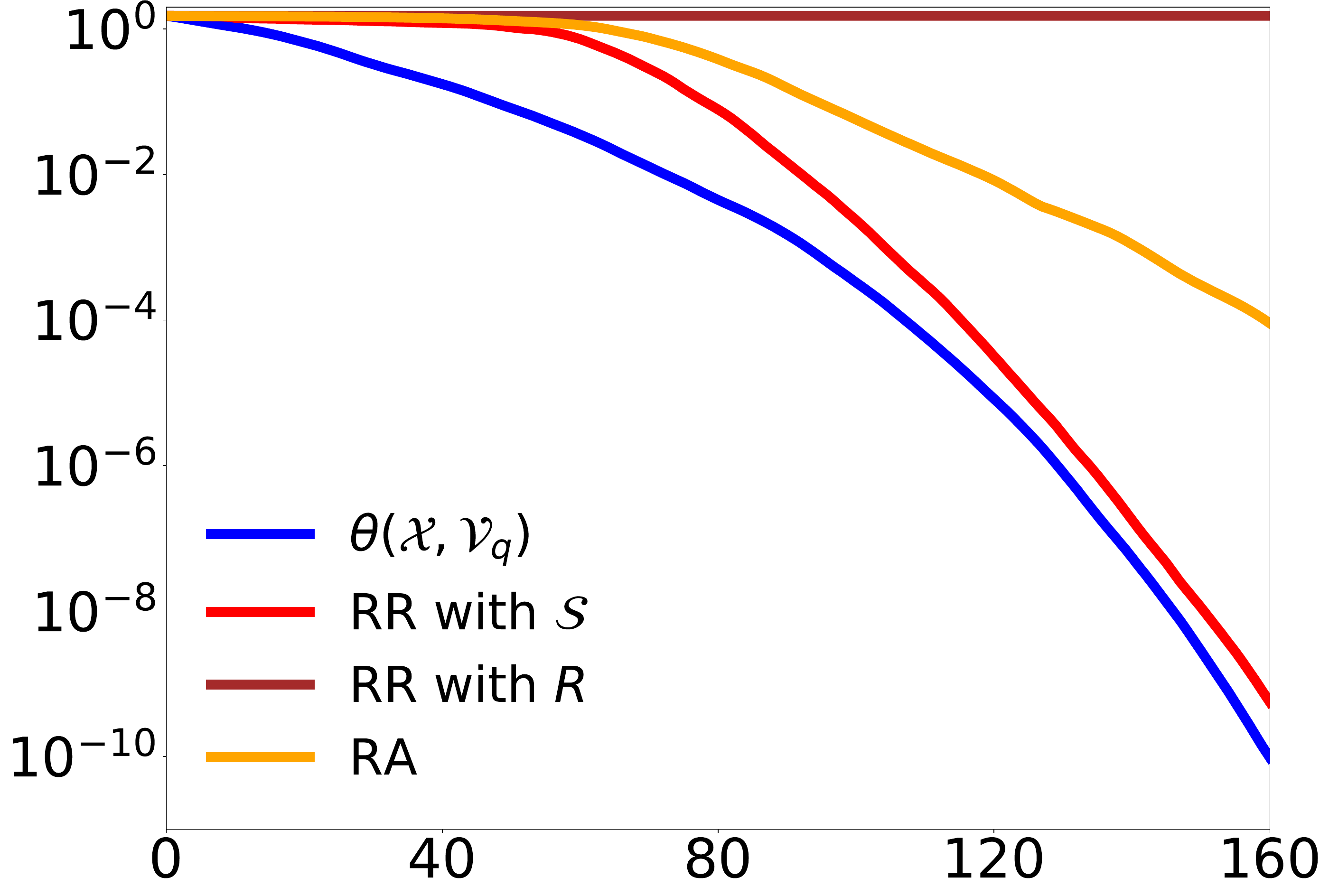}
        \caption{$r=30$, $p=20$ and $G=0$}
        \label{fig: lineal rapidos}
    \end{subfigure}
  \caption{Linear decay models}
  \label{fig: modelos lineales}
\end{figure}

\begin{figure}[ht!]
    \centering
  		\begin{subfigure}[b]{0.325\textwidth}
         \centering
         \includegraphics[width=\textwidth]{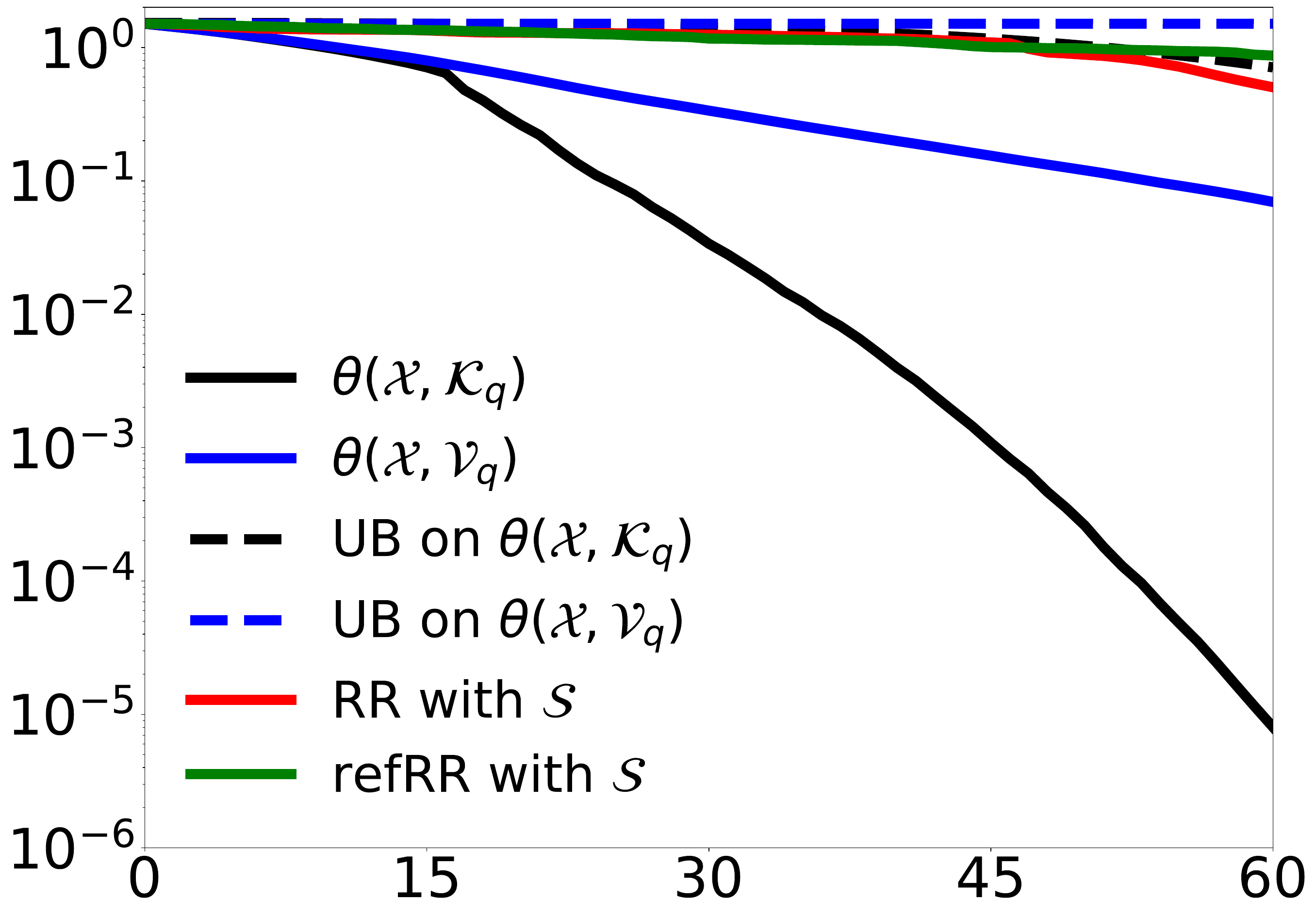}
         \caption{$r=45$, $p=30$ and $G=0$}
         \label{fig: Elip_1_plano}
     \end{subfigure}
        \hfill
   \begin{subfigure}[b]{0.325\textwidth}
         \centering
         \includegraphics[width=\textwidth]{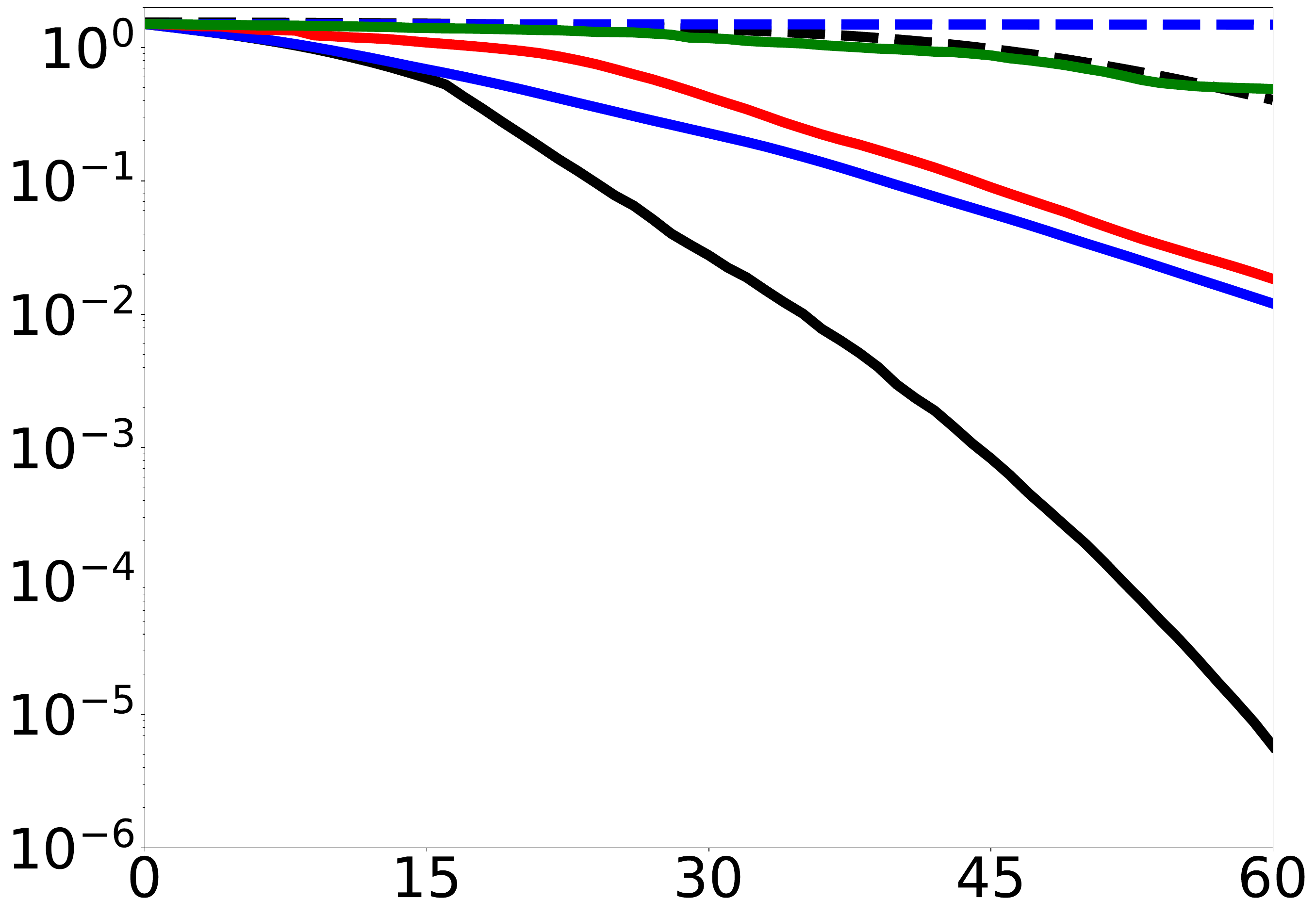}
         \caption{$r=45$, $p=30$ and $G=0.005$}
              \end{subfigure}
       \hfill
        \begin{subfigure}[b]{0.325\textwidth}
         \centering
         \includegraphics[width=\textwidth]{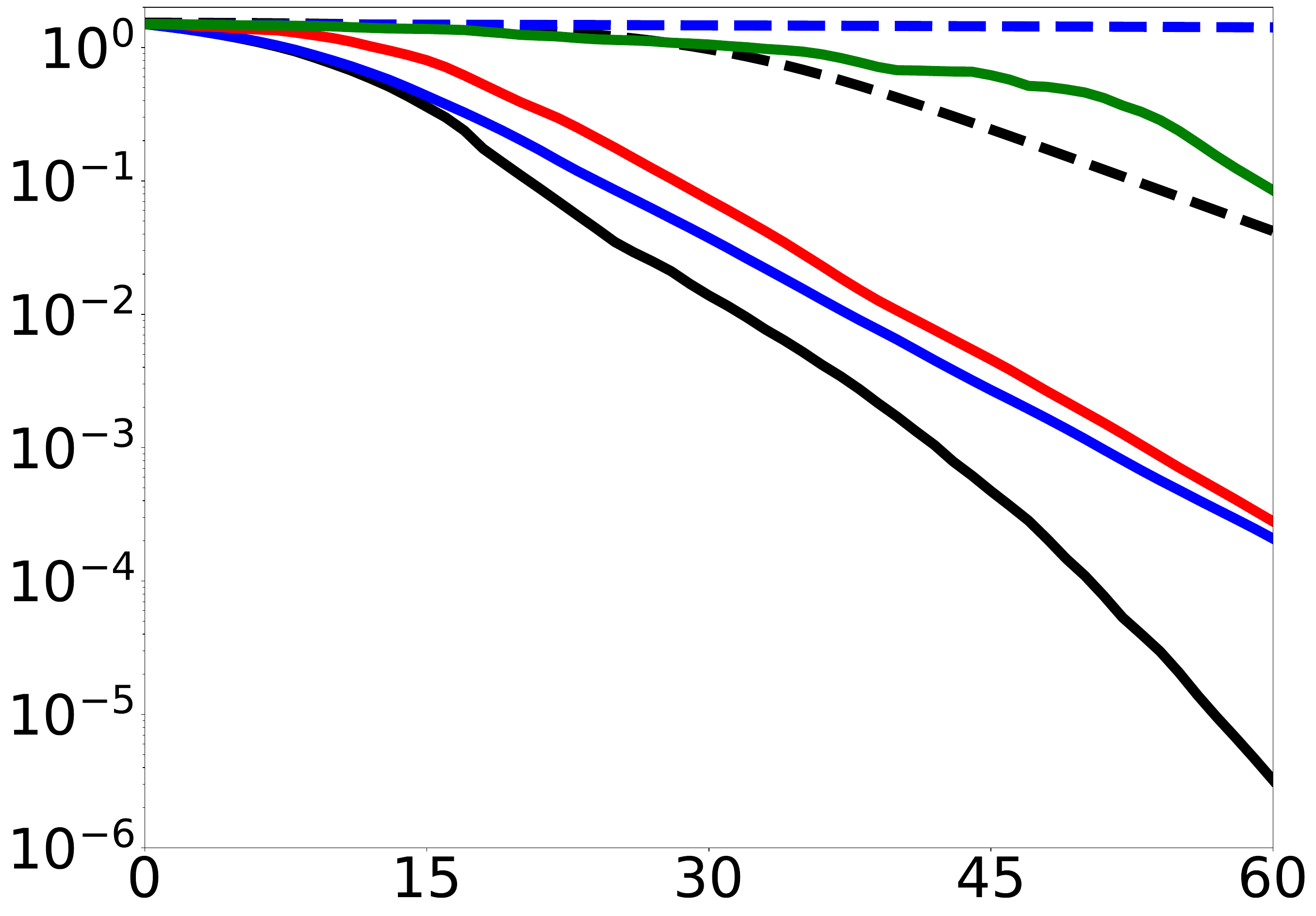}
         \caption{$r=45$, $p=30$ and $G=0.02$}
         \end{subfigure}
       \vspace{0.5cm}
    \begin{subfigure}[b]{0.325\textwidth}
         \centering
         \includegraphics[width=\textwidth]{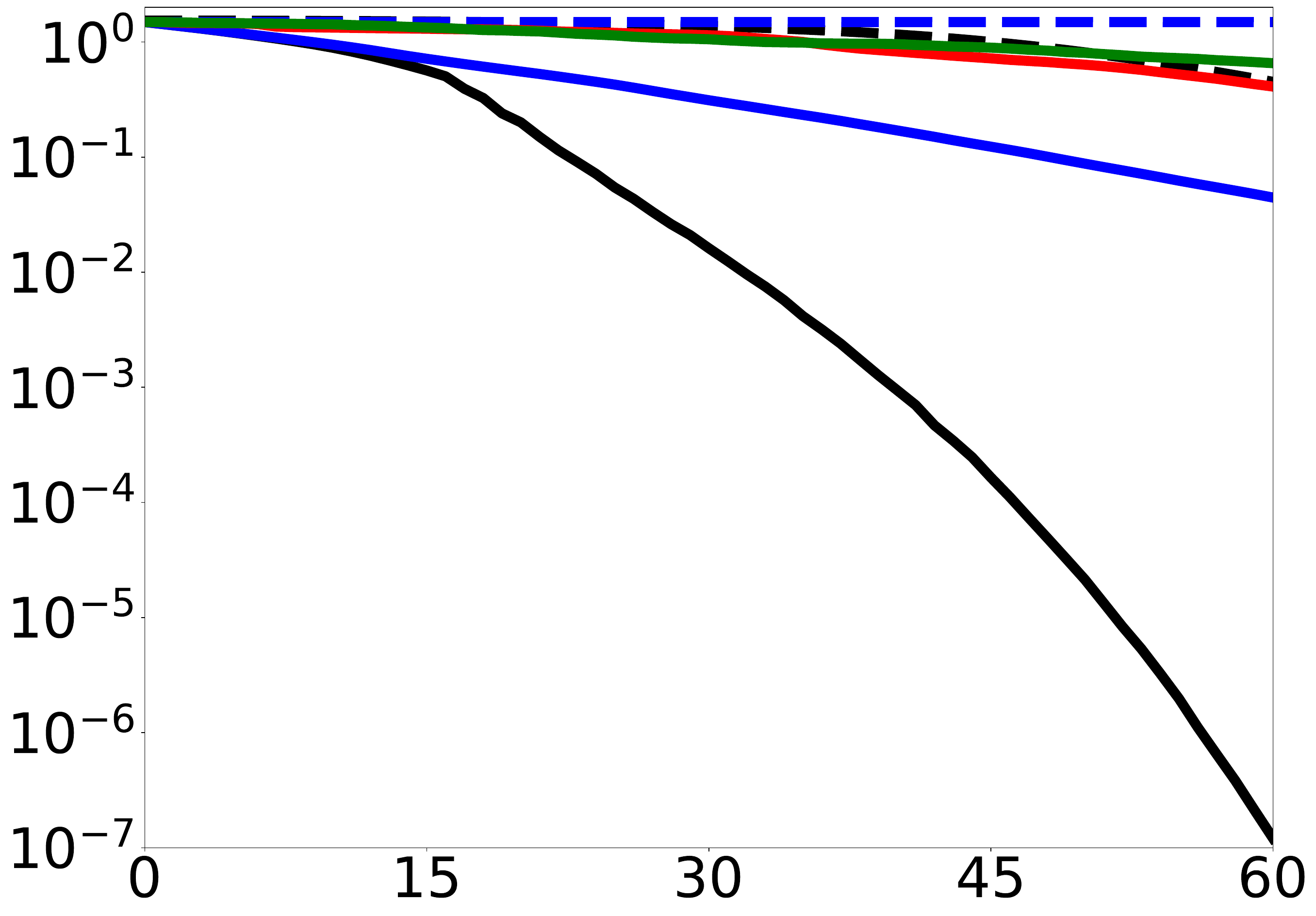}
         \caption{$r=60,\, p=40$ and $G=0$}
         \label{fig: Elip 2 mas SM}
     \end{subfigure}
        \hfill
    \begin{subfigure}[b]{0.325\textwidth}
         \centering
         \includegraphics[width=\textwidth]{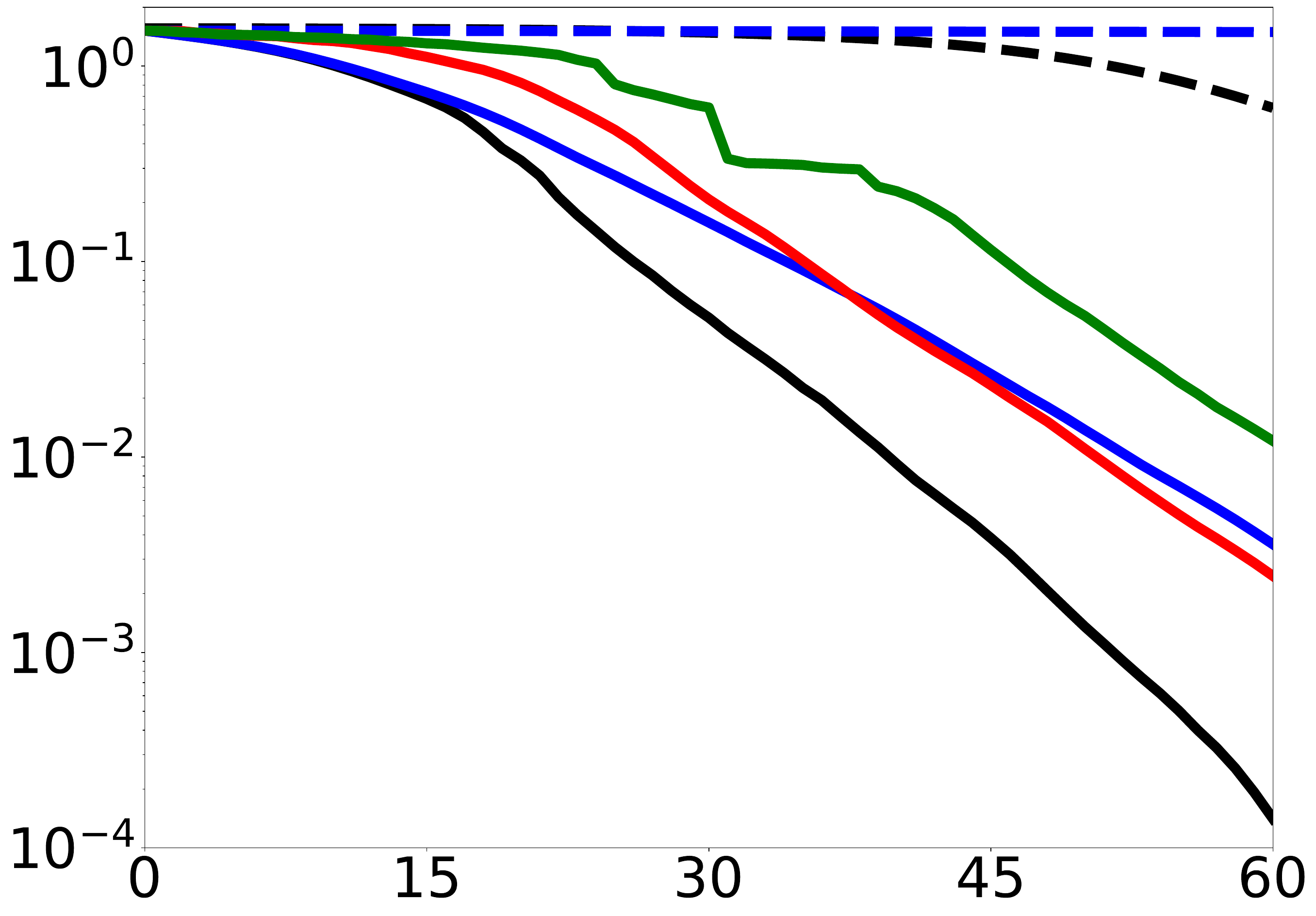}
         \caption{$r=30,\,p=27$ and $G=0.01$}
         \label{fig: Elip 2 poco SM}
     \end{subfigure}
        \hfill
    \begin{subfigure}[b]{0.325\textwidth}
         \centering
         \includegraphics[width=\textwidth]{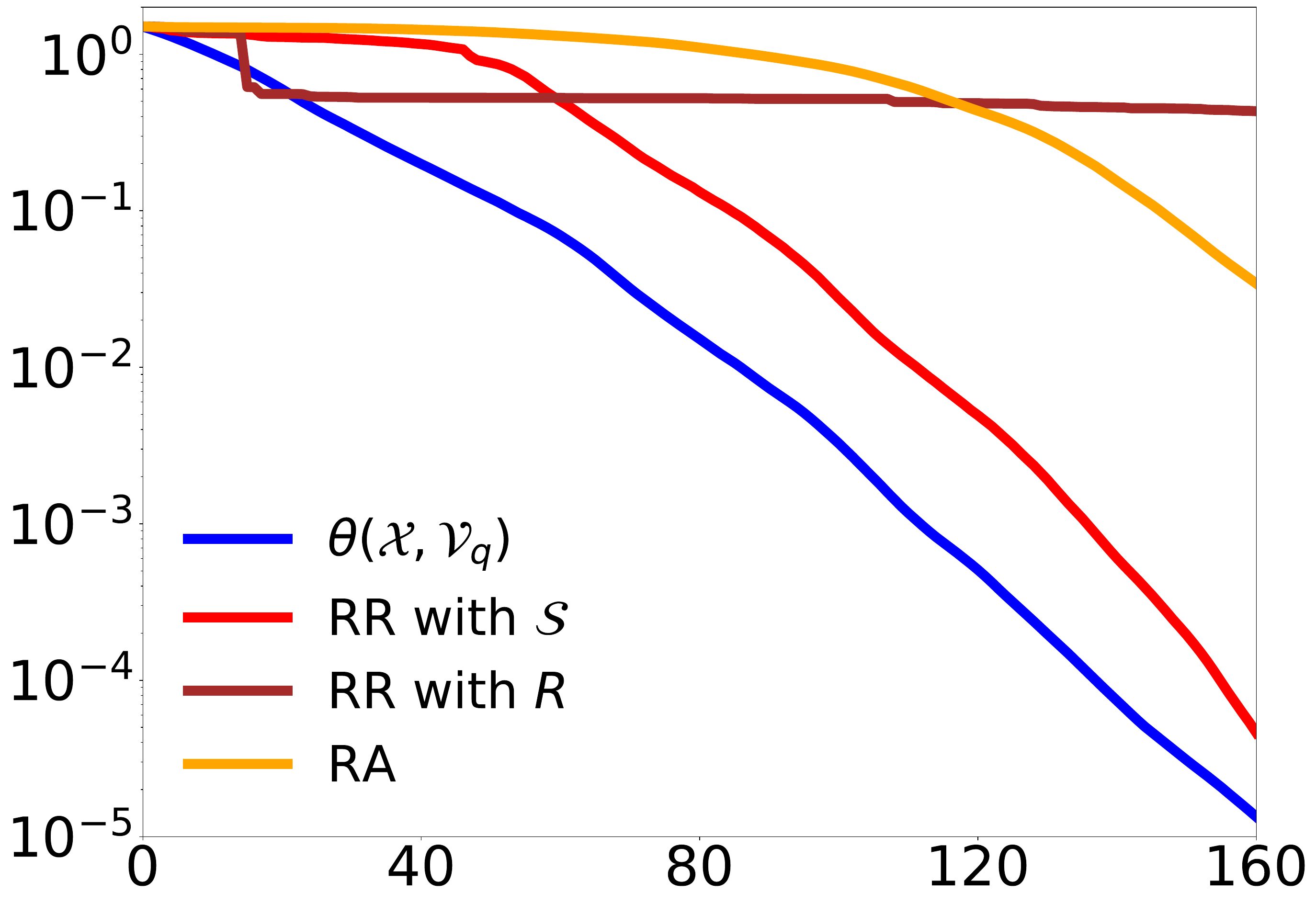}
         \caption{$r=45,\, p=30$ and $G=0$}
         \label{fig: eliptico rapido}
     \end{subfigure}
    \caption{Eliptical decay models}
    \label{fig: modelos elipticos}
\end{figure}

\begin{figure}[ht!]
     \centering
     \begin{subfigure}[b]{0.325\textwidth}
         \centering
         \includegraphics[width=\textwidth]{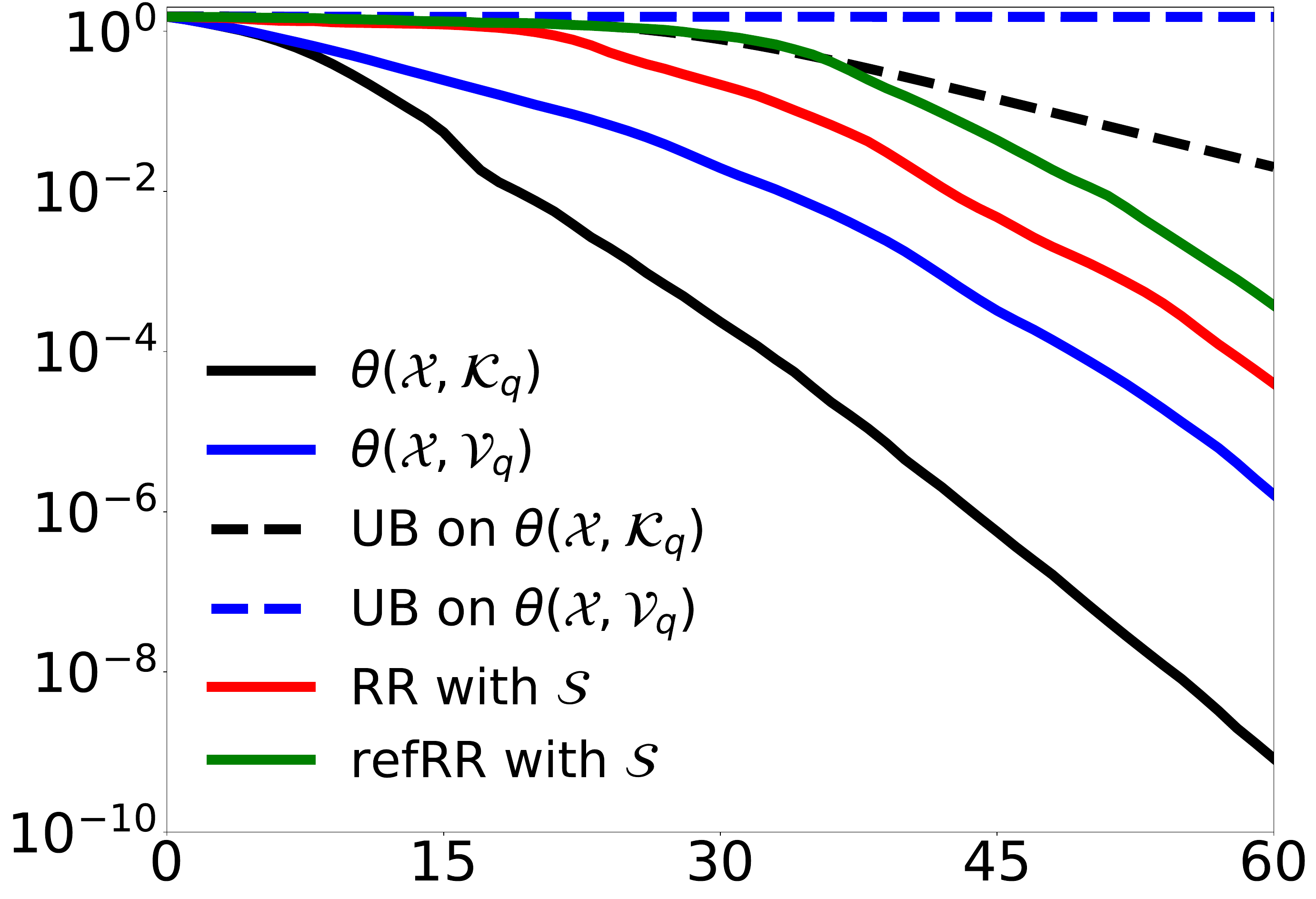}
         \caption{$r=30,\,p=20$ and $G=0$}
         \label{fig: poli plano}
     \end{subfigure}
     \hfill
     \begin{subfigure}[b]{0.325\textwidth}
         \centering
         \includegraphics[width=\textwidth]{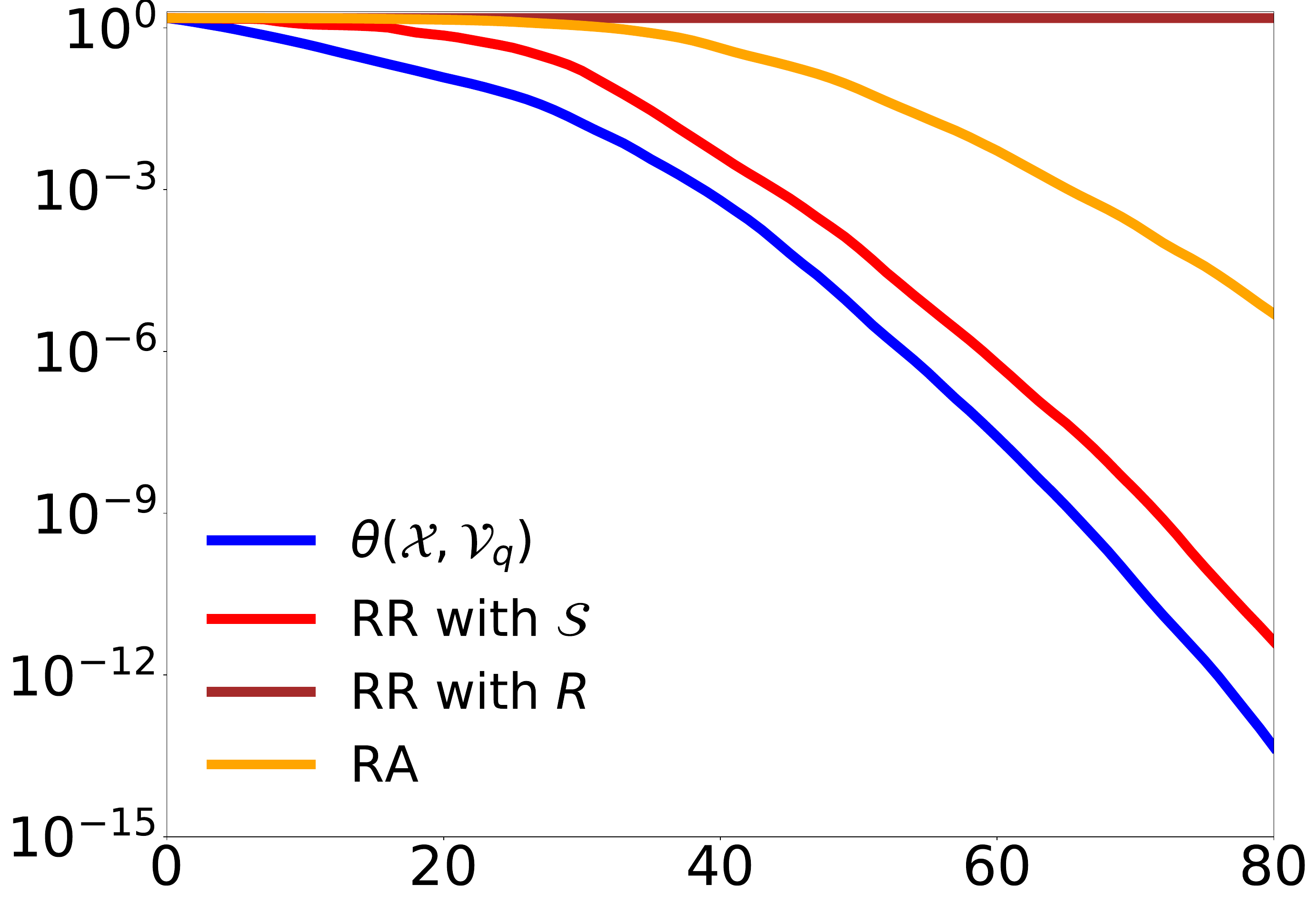}
         \caption{$r=30,\,p=20$ and $G=0$}
         \label{fig: poli rapidos}
     \end{subfigure}
     \hfill
     \begin{subfigure}[b]{0.325\textwidth}
         \centering
         \includegraphics[width=\textwidth]{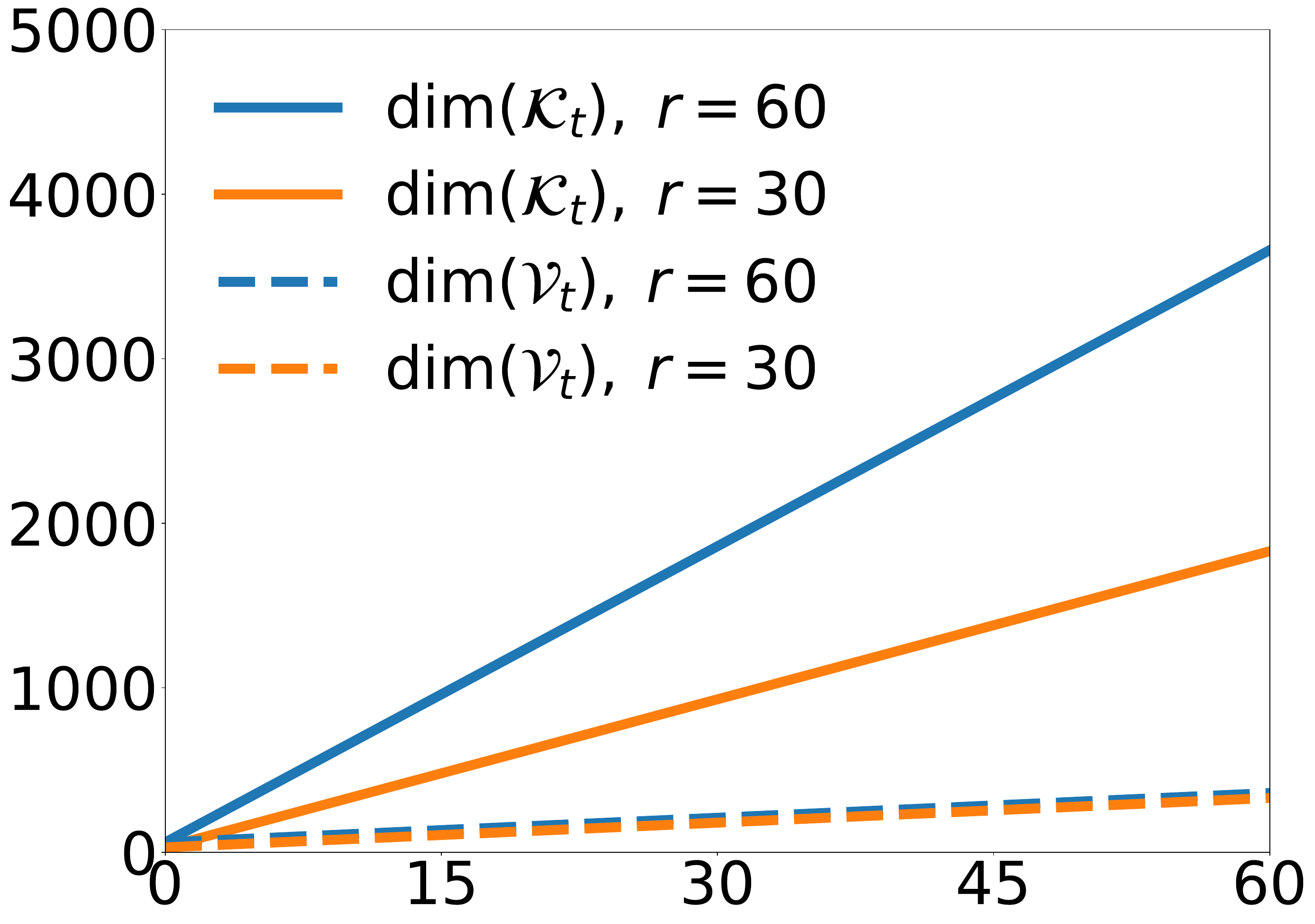}
         \caption{Dimensions of various subspaces}
     \end{subfigure}
        \caption{Polynomial (baseline) decay model with moderate oversampling and various dimensions.}
    \label{fig: modelos polinomicos}
\end{figure}

\pausa
{\bf Analysis of the numerical results}.
In all the computed examples, the block Krylov method produces the best 
approximation of the target space $\cX$ after 60 iterations. This is in accordance with the
fact that $\cV_t\subset \cK_t$, for $0\leq t\leq q$, and that $\dim \cV\geq \dim \cX$
(see the comments after Theorem \ref{teo main conv vt}).
On the other hand, the theoretical optimal subspace expansion computed
by Algorithm \ref{algo1} typically provides better approximations than its 
computable counterparts obtained using Algorithm \ref{algo2}, based on the
RR and ref-RR methods. 

\pausa
The approximations of the target subspace based on the implementation of 
Algorithm \ref{algo2} using the RR method typically have a better performance than
those approximations obtained using the ref-RR method. Although the ref-RR method
produces approximated eigenvectors that have smaller residual errors than those approximated eigenvectors obtained using the RR method \cite{J97, Jia04}, this fact does not seem to imply that the subspaces computed using Algorithm \ref{algo2} based on the ref-RR method have smaller angular distance that those computed using RR method.
Furthermore, RR projection methods are considerably faster than ref-RR projection methods (for the same number of iterations).
For these reasons we have not included the extension of Jia's expansion method based on the refined Rayleigh-Ritz, but we point out that, for $q=60$, our experiments with the extension of Jia's expansion method gave similar results for both projections methods.

\pausa
The upper bound in Theorem \ref{teo main Kt}
shows that both the decay of the
eigenvalues values of the matrix $A$ and the oversampling ($\rho=r-d$) of the method 
play a role in the decay of the (largest) angle $\theta(\cX,\cK_t)$ as a function of the 
number of iterations $t\geq 0$ (see the comments after Theorem \ref{teo main Kt} and in Remark \ref{rem sobre p}).  These facts are reflected in 
the numerical examples: the faster the eigenvalues values decay or the 
larger the oversampling the faster the angles $\theta(\cX,\cK_t)$ decay, 
as a function of $t$. Figures \ref{fig: lineal plano} and \ref{fig: Elip_1_plano} should be compared with Figures \ref{fig: lineal plano mucho OV} and \ref{fig: Elip 2 mas SM} respectively to get a grasp of the effects of oversampling in our experiments, keeping the same singular gap ($G=0$). 

\pausa
On the other hand, the upper bound in Theorem \ref{teo main conv vt}
shows that the gap between the eigenvalues $\la_d>\la_{d+1}$ 
of the matrix $A$ plays a role in the decay of the (largest) angle $\theta(\cX,\cV_t)$ as a function of $t\geq 0$.
Notice that the previous analysis does not involve the oversampling of the method.
The numerical examples confirm the role of the gap $\la_d>\la_{d+1}$ and
also show a relatively small impact of the oversampling of the method in the 
decay of the angle $\theta(\cX,\cV_t)$, as a function of $0\leq t\leq q$.  

\pausa
Regarding the growth of the dimensions of the computed subspaces, we point out that in 
all the numerical examples we get that $\dim \cK_t=(t+1)\cdot r$, while
$\dim\cV_t=\dim \widehat \cV_t=r+t\cdot d$, for $0\leq t\leq q$. In particular, 
after $q=60$ iterations with $r=30$ ($d=5$ and $n=5000$) we have $\dim \cK_{60}=1830$ ($\approx 36\%$ of 
$n=5000$) while $\dim \cV_{60}=\dim \widehat \cV_{60}=330$ ( $\approx 6\%$ of 
$n=5000$ and $\approx 18\%$ of $\dim \cK_{60}$). These last facts are also reflected in the computational times of the different methods. Indeed, Algorithm \ref{algo2} using the RR projection method becomes considerable faster than the block Krylov method as the number of iterations increases. For example, the computational time of block Krylov method for the elliptic model shown in Figure \ref{fig: Elip_1_plano} (with $r=45$) is comparable with the computational time of the Algorithm \ref{algo2} using the RR projection method shown in Figure \ref{fig: eliptico rapido}. Moreover, $\theta(\cX,\,\cK_{60})\approx\theta(\cX,\,\widehat{\cV}_{160})$ while $\dim\cK_{60}\approx2700$ and $\dim\widehat{\cV}_{160}\approx 850$.
Similar comments apply to the linear model (Figures \ref{fig: lineal plano} and \ref{fig: lineal rapidos}) and the polynomial model (Figures \ref{fig: poli plano} and \ref{fig: poli rapidos}).

\pausa
The performances of the theoretical optimal expansion $\cV_t$ as well as its
computable counterparts $\widehat \cV_t$ are comparable with the block Krylov method (for the same number of iterations)
in cases in which the eigenvalue gap $\la_d>\la_{d+1}$ is significant and the
oversampling of the methods is rather small. In the example shown in 
Figure \ref{fig: lineal plano poco OS y gap}, 
after $q=60$ iterations we have that $\dim \cK_{60}=1220$, 
while $\dim \cV_{60}=\dim \cV_{60}=320$ ($\approx 26\%$ of $\dim \cK_{60}$).

\section{Appendix: some technical results}\label{sec append complet}

In this section we present detailed proofs of the technical statements included in Section \ref{sec proofs main}. 

\subsection{Principal angles between subspaces}\label{Appendixity1}

\pausa
Below we present the proofs of Propositions \ref{pro monotonia de angulos} and \ref{rem desc sum y algo mas}. Although these results are arguably folklore, we include their proofs for the convenience of the reader.

\proof[Proof of Proposition \ref{pro monotonia de angulos}]
We consider subspaces  $\cX,\,\cY,\,\cS\subset \C^n$,
such that $d=\dim\cX\leq\dim\cY$ and $\cY\inc\cS$. 

\pausa
 To show that $\Theta(\cX,\,\cS)\leq  \Theta(\cX,\,\cY)$ notice that $\cY\subseteq\cS$, so we have that $P_\cS P_\cY=P_\cY=P_\cY P_\cS$. Then,  
		for every $ i\in \I_d\,$,  
\beq\label{inter}\barr{rl}
    \cos^2(\theta_i(\cX,\,\cY))
     =  s_i(P_\cX P_\cY)^2  & =     \la_i(P_\cY P_\cX  P_\cY) =
    \la_i(P_\cY (P_\cS P_\cX P_\cS) P_\cY)
		\\&\\
&   \stackrel{\star}\leq  \la_i(P_\cS P_\cX P_\cS)  =   s_i(P_\cX P_\cS)^2  = \cos^2(\theta_i(\cX,\,\cS))   \ ,
    \earr
   \eeq
where the inequalities $ \stackrel{\star}\leq$ follow by the interlacing inequalities. 
Thus, since the principal angles lay in $[0,\pi/2]$, the result follows from the fact that $\cos(x)$ is a decreasing function in that interval. 

\pausa 
Denote by  $\cM = P_\cS (\cX)\inc \cS $ and $m = \dim \cM \le d$. 
Notice that since $P_\cM P_\cX= P_\cS P_\cX $ then
$0<s_i(P_\cM P_\cX)=s_i (P_\cS P_\cX ) =\cos(\theta_i)$, for $1\leq i\leq m$.
Furthermore, $s_i (P_\cS P_\cX ) =s_i (P_\cM P_\cX ) =0$ for $m<i\le d$. These facts show 
that $\theta_i(\cX,\cM)\in [0,\pi/2)$ for $i\in\I_m$ and $\theta_i(\cX,\cM)=\pi/2$ for $m+1\leq i\leq d$, which is item 2.

\pausa 
 By the case of equality in the interlacing inequalities in Eq. \eqref{inter}, we get that $\Theta(\cX,\cS)= \Theta(\cX,\cY)$
(that is $\la_i(P_\cY  P_\cX P_\cY)=
\la_i( P_\cS P_\cX  P_\cS )$, for $1\leq i\leq d$) if and only if 
$\cY$ contains an orthonormal system $\{y_1,\ldots,y_d\}$ such that 
$$
P_\cS P_\cX  P_\cS \ y_i=\cos^2(\theta_i(\cX,\cS)\,)\  y_i \peso{for every} 
i\in \I_d\ .
$$ 
Since $\cos(\theta_i(\cX,\cS)\,)>0$ for $i \in \I_m\,$, this means that 
$\{y_1,\ldots,y_m\}  \inc P_\cS(\cX) = \cM$ (and they form an orthonormal basis for $\cM$). 
Hence $\Theta(\cX,\cS)= \Theta(\cX,\cY)$ if and only if $\cM\inc \cY$ which is item 3.
\QED

\proof [Proof of Proposition \ref{rem desc sum y algo mas}]
Let $\cV \coma \cA \inc \C^n$ be subspaces and set  $\cS = \cV+ \cA$.
To prove Eq. \eqref{eq desc sum1} i.e.
$\cS = \cV+ \cA=\cV\oplus (1-P_\cV)(\cA)$, let $v+a\in \cV+\cA $, for some $v\in\cV$ and $a\in\cA$. Then 
$$
v+a=P_\cV(v+a)+(1-P_\cV)(v+a)= v'+(1-P_\cV)a\in\cV\oplus(1-P_\cV)(\cA) \ , 
$$
where $v'=P_\cV(v+a)$ and where we used that $(1-P_\cV)v=0$. Conversely, if $v\in\cV$ and $a\in\cA$
then $v+(1-P_\cV)a=v+(a-P_\cV a)=v'+a$, where $v'=v-P_\cV a\in \cV$. 

\pausa 
\noindent Given a subspace $\cE$ such that $\cV\inc \cE \inc \cS$, simply take
 $\cD= \cA\cap \cE \subseteq \cA$, which
clearly satisfies that $\cV+\cD\inc \cE$. Moreover, given $e\in\cE\subseteq\cS$ there exists $a\in \cA$ and $v\in\cV$ such that $a=e-v \in \cD$ so $e=v+a\in\cV+\cD$. We therefore conclude that $\cE=\cV+\cD\stackrel{\eqref{eq desc sum1}}=\cV+(1-P_{\cV})\cD$.
\QED

\subsection{Principal angles and their tangents}\label{subsec angles}

    In this section we prove a key inequality involving tangents of principal angles between subspaces, that is related with the work in \cite{KZ13}. Indeed, if  
	we let	$\begin{bmatrix} X&X_\bot\end{bmatrix}$ be a unitary matrix with $X\in\C^{n\times d}$ and $V\in\C^{n\times r}$ with orthonormal columns and such that $\rk(X^*V)=d$ then, the first $d$ singular values of $X_\bot^*V(X^*V)^\dagger$ coincide with the  tangents of the angles between $R(X)$ and $R(V)$.
	In case that $V$ does not have orthonormal columns then the situation changes, as shown in the following

    \begin{exa}
Let 
$\begin{bmatrix} X&X_\bot\end{bmatrix}$ be the identity matrix of size 5, with $X\in\C^{5\times 2}$, and $\{V_a\}_{a\in\R}$ be the family of matrices given by
    \[
    V_a
    =
    \frac{1}{\sqrt{2}}
    \begin{pmatrix}
        1&0&0\\0&1&0\\1&0&0\\0&1&0\\a&0&\sqrt{2}
    \end{pmatrix}\in \C^{5\times 3}
    \,.
    \]
    It's easy to see that $\rk(V_a)=3$, $\rk(X^*V_a)=2$ and $R(V_a)=R(V_0)$ for all $a\in\R$, but
    
    \[
    X_{\bot}^*V_a(X^*V_a)^\dagger
    =
    \begin{pmatrix}
        1&0&0\\0&1&0\\a&0&\sqrt{2}
    \end{pmatrix}
    \begin{pmatrix}
        1&0&0\\0&1&0
    \end{pmatrix}^\dagger
    =
    \begin{pmatrix}
        1&0\\0&1\\a&0
    \end{pmatrix}
    \Longrightarrow
    s( X_{\bot}^*V_a(X^*V_a)^\dagger)
    =
    ((1+a^2)^{1/2},1)
    \,.
    \]
    \noindent So, the matrices $V_a$ are all full rank and they all have the same range, but the matrices $X_{\bot}^*V_a(X^*V_a)^\dagger$ may have an arbitrarily large first singular value, which doesn't contradict \cite[Theorem 3.1]{KZ13}, since $\rk(V_a)=3>2=\rk(X^*V_a)$. Notice that the choice $a=0$ produces the smallest singular values, which is exactly the case where the matrix $V_a$ has orthonormal columns. In that case, those singular values coincide with the tangents of the angles $\theta(X,\,V)$. \EOE
\end{exa} 

\pausa
The next result, that complements the result from Zhu and Knyazev, will play a key role
in our approach to the convergence analysis of Algorithm \ref{algo1} in the Hermitian case.
Notice that the next result is stronger than Theorem \ref{cor: tang vs raro}.

\begin{teo}\label{teo: tang vs raro}
Let $\begin{bmatrix} X&X_\bot\end{bmatrix}$ be a unitary matrix with $X\in\C^{n\times d}$ and let $V\in \C^{n\times r'}$ be such that $X^* V\neq 0$. Let $V_0\in \C^{n\times r}$
have orthonormal columns and be such that $R(V)=R(V_0)=\cV$. Then, we have
that $$s_j(X_\bot^* V_0(X^*V_0)^\dagger)\leq s_j(X_\bot^* V(X^*V)^\dagger) \ , \ \text{ for} \  1\leq j\leq d\,.$$
If we assume further that 
$\dim X^*(\cV)=d$ (so $r\geq d$) then, 
\begin{equation}\label{eq val sing}
s_j(\tan(\Theta(\cX,\, \cV)))
\leq
s_j(X_\bot^* V(X^*V)^\dagger) \ \text{ for}\ 1\leq j\leq d\,,
\end{equation}
\noindent where $\cX=R(X)$. In particular, for every unitary invariant norm $\|\cdot\|$ we have that
$$
\|X_{\bot}^* V_0(X^*V_0)^\dagger\|
=
\|\tan(\Theta(\cX,\cV))\|
\leq
\|X_{\bot}^* V(X^*V)^\dagger\|\,.
$$
\noindent 
\end{teo}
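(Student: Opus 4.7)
Since $V_0$ has orthonormal columns and $R(V)=R(V_0)=\cV$, the natural first step is to factor $V=V_0M$ with $M:=V_0^*V\in\C^{r\times r'}$; from $V_0V_0^*=P_\cV$ and $R(V)\subseteq\cV$ we get $V=V_0M$, and comparing $R(V_0M)=V_0(R(M))$ with $R(V_0)$ (using injectivity of $V_0$) forces $M$ to have full row rank. Setting $A:=X^*V_0$ and $B:=X_\bot^*V_0$ yields $X^*V=AM$ and $X_\bot^*V=BM$, while the orthonormality of $V_0$ combined with $XX^*+X_\bot X_\bot^*=I_n$ gives the key structural identity
\[
A^*A+B^*B=I_r.
\]

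I focus on the generic case $\dim X^*(\cV)=d$, in which both $A$ and $AM$ have full row rank. Then $A^\dagger=A^*(AA^*)^{-1}$ and $(AM)^\dagger=M^*A^*(AMM^*A^*)^{-1}$, so with $N:=MM^*\succ 0$,
\[
X_\bot^*V_0(X^*V_0)^\dagger=BA^*(AA^*)^{-1},\qquad X_\bot^*V(X^*V)^\dagger=BNA^*(ANA^*)^{-1}.
\]
The inequality thus reduces to the algebraic claim that, for every positive definite $N\in\C^{r\times r}$,
\[
s_j\bigl(BA^*(AA^*)^{-1}\bigr)\le s_j\bigl(BNA^*(ANA^*)^{-1}\bigr),\qquad j=1,\dots,d.
\]
To prove it, take the SVD $A=U_A[\Sigma_1\ 0]W^*$ with $\Sigma_1\succ 0$ diagonal of size $d$, write $BW=[\tilde B_1,\tilde B_2]$ in the induced block structure, and block-decompose $W^*NW$. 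Conjugating $A^*A+B^*B=I_r$ by $W$ forces
\[
\tilde B_1^*\tilde B_1=I-\Sigma_1^2,\quad \tilde B_2^*\tilde B_2=I_{r-d},\quad \tilde B_1^*\tilde B_2=0.
\]
A direct computation then gives $BA^*(AA^*)^{-1}=\tilde B_1\Sigma_1^{-1}U_A^*$ and $BNA^*(ANA^*)^{-1}=(\tilde B_1+\tilde B_2K)\Sigma_1^{-1}U_A^*$ for an explicit $K$ built from the blocks of $W^*NW$. Using $\tilde B_1^*\tilde B_2=0$ and $\tilde B_2^*\tilde B_2=I_{r-d}$,
\[
(\tilde B_1+\tilde B_2K)^*(\tilde B_1+\tilde B_2K)=\tilde B_1^*\tilde B_1+K^*K\succeq \tilde B_1^*\tilde B_1,
\]
and congjugation by $\Sigma_1^{-1}$ preserves this L\"owner inequality; taking $j$-th eigenvalues and square roots delivers the singular value inequality. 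The edge case $\dim X^*(\cV)<d$ is handled by noting that for $j$ exceeding this dimension both sides of \eqref{eq val sing} vanish.

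For the second claim of Theorem \ref{teo: tang vs raro}, under the hypothesis $\dim X^*(\cV)=d$ the Zhu-Knyazev identity quoted just before the statement gives $s_j(X_\bot^*V_0(X^*V_0)^\dagger)=\tan\theta_j(\cX,\cV)$, so combining with the first part yields \eqref{eq val sing}; the final u.i.\ norm assertion is automatic by Ky Fan dominance. I expect the main obstacle to be the algebraic bookkeeping in the SVD-coordinate step, namely tracking $M$, $W$ and $U_A$ simultaneously through the pseudoinverse formulas, but conceptually the whole argument rests on the single identity $A^*A+B^*B=I_r$ forced by the orthonormality of $V_0$, whose consequence $\tilde B_1^*\tilde B_2=0$ is precisely what makes the additive term $K^*K$ carry the correct sign.
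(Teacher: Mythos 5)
Your core computation is correct, and it takes a genuinely different route from the paper. You linearize the problem by writing $V=V_0M$, pass to $A=X^*V_0$, $B=X_\bot^*V_0$ with the structural identity $A^*A+B^*B=I_r$, and reduce everything to the purely algebraic claim $s_j(BA^*(AA^*)^{-1})\le s_j(BNA^*(ANA^*)^{-1})$ for $N=MM^*\succ0$, which you settle by an SVD block decomposition and the L\"owner inequality $\tilde B_1^*\tilde B_1+K^*K\succeq\tilde B_1^*\tilde B_1$ (the cross terms vanishing because $\tilde B_1^*\tilde B_2=0$), followed by Weyl monotonicity. The paper instead first reduces to $V$ of full column rank with $\ker(X^*V)=\ker V$, takes the polar decomposition $V=U|V|$, computes the action of $X_\bot^*V(X^*V)^\dagger$ on the right singular vectors $z_i$ of $X^*U$ explicitly, and concludes via the Courant--Fischer-type variational characterization $s_j^2\ge\min_{x\in\cM_j,\|x\|=1}\|\cdot x\|^2$. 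Your argument is arguably cleaner: it isolates in one matrix inequality exactly where the orthonormality of $V_0$ enters, and it identifies the "excess" $K^*K=N_{11}^{-1}N_{12}N_{21}N_{11}^{-1}$ responsible for the loss when $V$ is not orthonormal, whereas the paper's variational step only yields the inequality one index at a time.

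One loose end: the first inequality of the theorem is asserted for every $V$ with $X^*V\neq0$, including the degenerate case $0<\rk(X^*V)=e<d$, and your closing remark ("both sides of \eqref{eq val sing} vanish") only disposes of the indices $j>e$; for $1\le j\le e$ your pseudo-inverse formulas $A^\dagger=A^*(AA^*)^{-1}$ and $(AM)^\dagger=M^*A^*(AMM^*A^*)^{-1}$ fail because $AA^*$ is singular. The paper spends the first third of its proof on precisely this case, replacing $(X,X_\perp,V)$ by a reduced triple $(H_1,H_{1,\perp},\tilde V)$ for which the nondegeneracy holds. In your framework the fix is local rather than structural: take $A=U_A\begin{bmatrix}\Sigma_1&0\\0&0\end{bmatrix}W^*$ with $\Sigma_1$ of size $e$, use $C^\dagger=C^*(CC^*)^\dagger$ with Moore--Penrose inverses of the (now singular) Gram matrices, and the same block computation gives $BA^\dagger=[\tilde B_1\Sigma_1^{-1}\ \ 0]U_A^*$ and $B N A^*(ANA^*)^\dagger=[(\tilde B_1+\tilde B_2K)\Sigma_1^{-1}\ \ 0]U_A^*$ with $\tilde B_1$ of $e$ columns, after which the L\"owner argument applies verbatim. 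You should include this adjustment explicitly rather than appeal to vanishing of both sides.
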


\begin{proof}
We first show that we can reduce our analysis to the case in which $\cX\cap \cV^\perp=\{0\}$ or, equivalently, that $\dim X^*(\cV)=d$. Indeed, let
$V\in \C^{n\times r'}$ be such that $\cV=R(V)$. Let $\cX\cap \cV^\perp=\cX_1$ and notice that by hypothesis
$\dim \cX_1<d$. Assume further that $1\leq \dim \cX_1=d_1$ and set $\cX_2=\cX\ominus \cX_1$. Since $\rk(X^*V)=d-d_1$, we have that $s_j(X_\bot^* V(X^*V)^\dagger)=0$ for $j>d-d_1$. Replacing $[X\ \ X_\perp]$ by $[X \ \ X_\perp ]=[XW \ \ X_\perp]$ for 
a convenient unitary matrix $W\in\C^{d\times d}$ we can further assume that $X=[X_1\ \ X_2]$, where 
$X_1$ and $X_2$ have columns that form orthonormal basis of $\cX_1$ and $\cX_2$, respectively. We remark that this replacement does not modify $s(X_\bot^* V(X^*V)^\dagger)$.
Then,
$$
X^*V=\begin{bmatrix} X_1^* \\ X_2^*\end{bmatrix}V=\begin{bmatrix} 0 \\ X_2^*V\end{bmatrix} \implies
(X^*V)^\dagger =[0 \ \ (X_2V)^\dagger ]\,.
$$Hence, $X_\perp^*V (X^*V)^\dagger= X_\perp^*V  [0 \ \ (X_2V)^\dagger ]=[ 0 \ \ X_\perp^*V (X_2V)^\dagger]$.
Notice that the columns of the matrix $\tilde U=[X_2 \ \ X_\perp]\in\C^{n\times (n-d_1)}$ form an orthonormal basis of $\C^n \ominus \cX_1=\cX_2 \oplus \cX^\perp$. Consider $I_{n-d_1}=\tilde U^* \tilde U=[H_1 \ \ H_{1,\perp}]\in \C^{(n-d_1)\times (n-d_1) }$, where the columns of $H_1$ and $H_{1,\perp}$ are the first $d-d_1$ and last $n-d$ vectors of the canonical basis of $\C^{n-d_1}$ and set $\tilde V:= \tilde U^* V\in \C^{(n-d_1)\times r'}$. By construction we have that 
$X_1^*V=0$ and $\tilde U\,{\tilde U}^*=I_n-X_1X_1^*$ and thus, $\tilde U\,\tilde V=(I_n-X_1X_1^*)V=V$.
Since $\tilde U$ has orthonormal columns
$$\tilde U (R(\tilde V)^\perp)= [\tilde U (R(\tilde V))]^\perp \ominus \cX_1 = \cV^\perp \ominus \cX_1\,.$$
We also have that $$\tilde U(R(H_1)\cap R(\tilde V)^\perp)= \tilde U(R(H_1)) \cap \tilde U (R(\tilde V)^\perp)= 
\cX_2\cap (\cV^\perp\ominus \cX_1)=\{0\}\,.$$ Thus, $R(H_1)\cap R(\tilde V)^\perp=\{0\}$. 
Moreover, we get that
$H_{1,\perp}^* \tilde V (H_1^* \tilde V)^\dagger = X_\perp ^*V (X_2^*V )^\dagger$. The previous facts show that 
$$s_j( X_\perp^*V (X^*V)^\dagger) =s_j(H_{1,\perp}^* \tilde V (H_1^* \tilde V)^\dagger)\ , \ \text{ for}\ 1\leq j\leq d-d_1\,.$$

\pausa
Thus, we can replace the triplet $(X,X_\perp,V)$ by $(H_1,H_{1,\perp},\tilde V)$ in such a way that $R(H_1)\cap R(\tilde V)^\perp=\{0\}$. 
Notice that $\tilde U$, $H_1$ and $H_{1,\perp}$ only depend on $[X \ \ X_\perp]$ and $\cV$; also notice that $\tilde V=\tilde U^* V$ has orthonormal columns whenever $V$ has orthonormal columns.  Thus, in what follows we assume further that $\cX\cap \cV^\perp=\{0\}$ or equivalently, that $\dim X^*(\cV)=d$.

\pausa
Under this assumption, 
it follows from \cite[Theorem 3.1]{KZ13} that $s_j(X_\perp ^* V_0(X^*V_0)^\dagger )$ is equal to $s_j(\tan (\Theta(\cX,\,\cV) ))$, for $1\leq j\leq d$. Thus, if we prove the singular value inequalities in Eq. \eqref{eq val sing} then the first set of inequalities will follow. Also, by means of Ky Fan's dominance theorem (see \cite[Theorem IV.2.2]{Bhatia}) Eq. \eqref{eq val sing} also implies the last inequality in the statement, so we are only left to prove Eq. \eqref{eq val sing}

\pausa
By the previous argument we now consider a unitary matrix $[X\ \ X_\perp]\in\C^{n\times n}$ and $V\in \C^{n\times r'}$ such that if $\cV=R(V)$ and $\cX=R(X)$ then $\cX\cap \cV^\perp=\{0\}$.
Next we show that we can further assume that $V$ has full column rank. Indeed, we can always choose an orthonormal basis $\{w_1,\ldots,w_r\}\subseteq\C^{r'}$ for $\ker(V)^\perp$ and another orthonormal basis $\{w_{r+1},\ldots,w_{r'}\}\subseteq\C^{r'}$ for $\ker(V)$ to produce a unitary matrix $W=[w_{1} \cdots w_{r'}]\in\C^{r'\times r'}$ and consider $VW$. Notice that $VW=\begin{bmatrix} V_1&0\end{bmatrix}$ where $V_1\in \C^{n\times r}$ has full column rank and the same range as $V$, so the LHS of Eq. \eqref{eq val sing} remains the same when we replace $V$ with $V_1$. We now need to show that this is also the case for the RHS of Eq. \eqref{eq val sing}, which is done directly:
\begin{align*}
    X_\bot^* V(X^*V)^\dagger
    &		=
    X_\bot^* VW\;W^*(X^*V)^\dagger
		=
    X_\bot^* VW\;(X^*VW)^\dagger
    \\
    &=
    X_\bot^* \begin{bmatrix} V_1&0\end{bmatrix} \; (X^*\begin{bmatrix} V_1&0\end{bmatrix})^\dagger
		= 
    \begin{bmatrix} X_\bot^* V_1&0\end{bmatrix} \; \begin{bmatrix} X^*V_1&0\end{bmatrix}^\dagger
    \\
    &= 
    \begin{bmatrix} X_\bot^* V_1&0\end{bmatrix} \; \begin{bmatrix} (X^*V_1)^\dagger\\0\end{bmatrix}
    = 
    X_\bot^* V_1(X^*V_1)^\dagger
\end{align*}
\noindent since $W$ is unitary. Thus, we further assume (for now) that $V=V_1$ is full column rank and that $r'=r$.

\pausa
Let $\cS=\ker(X^*V)$: our assumptions imply that $\cS\subseteq\C^r$, $\dim\cS=r-d$ and that $\dim V(\cS^\bot)=\dim \cS^\perp =d$, where we used that $V$ is full column rank. 
Hence, by item 1. in Proposition \ref{pro monotonia de angulos}, we have that $\Theta(X,\,V)\leq\Theta(X,\,V(\cS^\bot))$. 
This implies that, for $1\leq j\leq d$
\begin{equation}\label{eq reduccion 1}
    s_j(\tan(\Theta(X,\,V)))
    \leq
    s_j(\tan(\Theta(X,\,V(\cS^\bot))))
    \,.
\end{equation}

\noindent If we let $\tilde{V}=VP_{\cS^\bot}$, then $R(\tilde V)=V(\cS^\perp)$, so $\rk(\tilde{V})=d$, and $X^*\tilde{V}=X^*VP_{\cS^\bot}=X^*V$, so 
$$\ker(X^*\Tilde{V})=\ker(X^*V)=\cS=\ker(\Tilde{V})\,.$$ Using the fact that $R((X^*V)^\dagger)=\ker(X^*V)^\bot=\cS^\bot$ we have that
\begin{equation}\label{eq reduccion 2}
X_\bot^* \tilde{V} (X^*\tilde{V})^\dagger
=
X_\bot^* VP_{\cS^\bot} (X^*VP_{\cS^\bot})^\dagger
=
X_\bot^* V(X^*V)^\dagger\,,
\end{equation}
\noindent and combining \eqref{eq reduccion 1} with \eqref{eq reduccion 2} it is easy to see that if we can prove that \eqref{eq val sing} holds for $\Tilde{V}$ then it also holds for $V$. To save notation, we will write $V$ instead of $\tilde{V}$, dropping the assumption that $V$ has full column rank and assuming that it satisfies the following conditions: $\ker (X^*V) = \ker V$ and $\rk(V)=\rk(X^*V)=d$.

\pausa
Let us take the polar decomposition $V=U|V|$ of $V$; in this case $U\in \C^{n\times r}$ is a partial isometry (i.e. $UU^*=P_{R(U)}$ and $U^*U=P_{\ker(U) ^\perp }$) such that $R(U)=R(V)$ and $\ker U = \ker V = \ker(X^*V)$. Since $R(U)=R(V)$ we have that $R(X^*U)=R(X^*V)$, so $\rk(X^*U)=d$. Moreover, $\ker(X^*U)=\ker(U)$: Indeed, the inclusion $\ker(U)\subset \ker(X^*U)$ is trivial and the dimensions coincide:
\[
\dim\ker(X^*U)
=
r-\rk(X^*U)
=
r-\rk(X^*V)
=
r-\rk(V)
=
r-\rk(U)
=
\dim\ker(U)\,.
\]

\pausa
Now, consider a SVD of $X^*U$ given by $X^*U=Z\Sigma W^*$, where $Z\in \C^{d \times d}$ and $W\in \C^{ r \times r}$ are unitary matrices.  Let $\{z_1,\ldots,z_d\}$ and $\{w_1,\ldots,w_d\}$ denote the first $d$ columns of $Z$ and $W$ respectively. Notice that $\{w_1,\ldots,w_d\}$ is a basis for the subspace $\ker(X^*U)^\perp=\ker(U)^\bot = \ker(V)^\bot =\ker(X^*V)^\bot$ and
\[
X^*U w_i=\cos(\theta_i) \ z_i \ , \ 1\leq i\leq d
\]
\noindent where $\theta_i = \theta_i(X,\,V)$, for $1\leq i\leq d$. Taking the adjoint of the previous SVD we get $U^*X=W\Sigma Z^*$ so
\[
U^*X z_i=\cos(\theta_i) \ w_i \ , \ 1\leq i\leq d\,.
\]

\pausa
Using the fact that $R(|V|^\dagger )=R(|V|)=\ker V^\perp=\ker U^\perp=(\ker X^*V)^\perp$ we get that
\[
\cos(\theta_i)\ z_i
=
X^*U |V| \ |V|^\dagger w_i
=
X^*V (|V|^\dagger w_i)
\Rightarrow
\cos(\theta_i)(X^*V)^\dagger z_i
=
|V|^\dagger w_i
\]
\noindent and it follows that
\[
X_\bot^* V(X^*V)^\dagger z_i
=
\frac{1}{\cos(\theta_i)} X_\bot^*U|V| |V|^\dagger w_i
=
\frac{1}{\cos(\theta_i)} X_\bot^*U w_i
\ , \ 1\leq i\leq d\,.
\]

\noindent Next, notice that $\{w_i\}_{1\leq i\leq d}\subseteq\ker U^\bot$, where $\begin{bmatrix} X&X_\bot \end{bmatrix}^*U$ acts isometrically. So, the vectors
\[
\begin{bmatrix} X&X_\bot \end{bmatrix}^*Uw_i
=
\begin{bmatrix} X^*Uw_i\\ X_\bot^*Uw_i \end{bmatrix}
=
\begin{bmatrix} \cos(\theta_i)z_i\\ X_\bot^*Uw_i \end{bmatrix}
\text{ for } 1\leq i\leq d
\]
\noindent form an orthonormal system. From this (and the fact $\{z_i\}_{1\leq i\leq d}$ is an orthonormal system) we deduce that $\{X_\bot^*Uw_i\}_{1\leq i\leq d}$ is an orthogonal system with $\|X_\bot^*Uw_i\|=\sin(\theta_i)$, for $1\leq i\leq d$.

\pausa
We now consider the following variational characterization of the singular values (see \cite[Problem III.6.1]{Bhatia}): if we let $\cM_j$ be the $j$-dimensional subspace spanned by $\{z_i\, : \, d-j+1\leq i\leq d\}$ then, for $1\leq j\leq d$ we have that
\begin{align*}
    s_j^2(X_\bot^* V(X^*V)^\dagger)
    &\geq
    \min_{x\in \cM_j,\;\|x\|=1} \|X_\bot^* V(X^*V)^\dagger x\|^2
    \\
    &=
    \min_{\sum_{i=d-j+1}^d |\alpha_i|^2=1} \left\|\sum_{i=d-j+1}^d \frac{\alpha_i}{\cos(\theta_i)} X_\bot^* U w_i\right\|^2
    \\
    &=
    \min_{\sum_{i=d-j+1}^d |\alpha_i|^2=1} \sum_{i=d-j+1}^d \frac{|\alpha_i|^2}{\cos^2(\theta_i)} \|X_\bot^* U w_i\|^2
    \\
    &=
    \min_{\sum_{i=d-j+1}^d |\alpha_i|^2=1} \sum_{i=d-j+1}^d |\alpha_i|^2\tan^2(\theta_i)
    \\
    &\geq \tan^2(\theta_{d-j+1})
    =
    s_j^2(\tan(\Theta(X, V)))
    \,.
\end{align*}
\noindent Taking square roots on both sides, the proof is complete.

\end{proof}

\subsection{Analysis of iterative algorithms}\label{subsec algos}

\proof[Proof of Theorem \ref{teo cota con polinomio}]
We fix an Hermitian matrix $A\in\C^{n\times n}$ and consider
an eigen-decomposition $A=X\Lambda X^*$ , where the matrix $X\in \C^{n\times n}$ is unitary and 
$\Lambda=\text{diag}(\la_1,\ldots,\la_n)$, with $\la_1\geq \ldots\geq \la_n$. We let $x_1,\ldots,x_n$ denote the columns of $X$. Hence, $A\,x_j=\la_j\,x_j$, for $1\leq j\leq n$. We assume further that $\la_d>\la_{d+1}$. Furthermore, for $1\leq \ell\leq n$ we consider the partitions:
$$
X=\begin{bmatrix} X_\ell& X_{\ell,\bot} \end{bmatrix} \peso{and} \Lambda=\begin{bmatrix} \Lambda_\ell & 0 \\ 0 & \Lambda_{\ell,\bot} \end{bmatrix} \,.
$$ where $X_\ell\in \C^{n\times \ell}$ and $\Lambda_\ell\in\C^{\ell\times \ell}$. 

\pausa
We let $\cV\subset \C^n$ with $\dim\cV=r\geq d$ and 
	 fix $d\leq p\leq r$ such that $\text{span}\{x_1,\ldots,x_p\}\cap \cV^\perp=\{0\}$. 
In what follows	we find $\cH_p\subset R(V)$ with $\dim\cH_p=d+r-p$
		such that
		$$    \|\tan(\Theta(\cX,\, \phi(A)(\cV) ))\|
        \leq
         \|\phi(\Lambda_d)^{-1}\|_2\ \|\phi(\Lambda_{p,\perp})\|_2 
        \;\| \tan(\Theta(\cX,\, \cH_p))\|
    $$
 for every polynomial $\phi\in \C[x]$ such that $\phi(\Lambda_d)$ is invertible and for every u.i.n. $\|\cdot\|$.

\pausa
Indeed, let $W\in\C^{n\times r}$ be a matrix with orthonormal columns such that $R(W)=\cV$.
Since $\text{span}\{x_1,\ldots,x_p\}\cap \cV^\perp=\{0\}$ we get that $p=\rk(W^*X_p)=\rk(X_p^*W)$.
    We now consider the matrix
    $F=[x_{d+1} \cdots x_{p}]^*W\in \C^{(p-d)\times r}$. The fact that $\rk(X_p^*W)=p$ implies that $\rk(F)=p-d$ so $\dim\ker(F)=r-(p-d)$. Let $Z\in\C^{r\times (d+r-p)}$ be such that it has orthonormal columns and $R(Z)=\ker(F)$. Then, $WZ\in \C^{n\times (d+r-p)}$ has orthonormal columns and is such that $R(WZ)\subset \{x_{d+1},\ldots,x_p\}^\perp$. Also, $R(WZ)\subset R(W)=\cV$ and $\rk(WZ)=d+r-p\geq d$. We define $\cH_p=R(WZ)$ and check that this subspace has the desired properties. Indeed, the previous comments show that $\cH_p\subset \cV$ is such that $\dim \cH_p=d+r-p$.
    
\pausa
  Since $\dim \ker(X_p^*W)^\perp=p$ we see that $\dim R(Z)+\dim \ker(X_p^*W)^\perp=d+(r-p)+p=r+d$ so we must have 
    $\dim \ker(X_p^*W)^\perp\cap R(Z)\geq d$ so then $\rk(X_p^*WZ)\geq d$. But, by construction, 
$$
X_p^*WZ=[[x_1\cdots x_d] \ [x_{d+1}\cdots x_p]]^* WZ= \begin{bmatrix} X_d^*WZ \\ 0 \end{bmatrix}\,.
$$ Therefore, we conclude that $\rk(X_d^* WZ)=d$. Since $WZ$ has orthonormal columns, $R(WZ)=\cH_p$ and $\dim \cH_p\geq \dim \cX$, we conclude that $\cX\cap \cH_p^\perp=\{0\}$.

\pausa Using the previous facts we can see, since $\phi(\Lambda_d)$ is invertible, that $\rk(\phi(A)WZ)\geq d$. Also, $R(\phi(A)WZ) \subset R(\phi(A)W)=\phi(A)(\cV)$.
Thus, using item 1. in Proposition \ref{pro monotonia de angulos} and Theorem \ref{cor: tang vs raro} we have that ($R(X_d)=\cX$)
    \begin{align*}
        \|\tan(\Theta(\cX, \phi(A)(\cV) ))\|
				&= \|\tan(\Theta(\cX,\, \phi(A)W ))\|
				\\ 
				&\leq \|\tan(\Theta(\cX,\, \phi(A)W Z ))\|
				\\
				&\leq \|X_{d,\bot}^* \phi(A)WZ \; (X_d^*\phi(A)WZ)^\dagger\|
        \\
        &=
        \|X_{d,\bot}^* X\phi(\Lambda)X^*WZ \; (X_d^*X\phi(\Lambda)X^*WZ)^\dagger\|
        \,.
    \end{align*}
Now, we can simplify 
$$X_{d,\bot}^*X\phi(\Lambda)X^*WZ=\phi(\Lambda_{d,\bot})X_{d,\bot}^*WZ=MX_{d,\bot}^*WZ\,,$$
where $M=\begin{bmatrix}
    0_{p-d}&0\\0&\phi(\Lambda_{p,\bot})
\end{bmatrix}$ by using the fact that $X_{d,\bot}^*WZ=[0 \ X_{p,\bot}]^*WZ$ by construction of $Z$.
Similarly,
$$X_d^*X\phi(\Lambda)X^*WZ = 
\phi(\Lambda_d)X_d^*WZ\,.$$ 
 Using these identities in our previous estimation and using the fact that $\phi(\Lambda_d)$ is invertible and $X_d^*WZ$ has full row rank, we obtain
    \begin{align*}
        \|\tan(\Theta(\cX,\, \phi(A)(\cV) ))\|
        &\leq
        \|MX_{d,\bot}^*WZ(\phi(\Lambda_d)X_d^*WZ)^\dagger\|
        \\
        &=
        \|MX_{d,\bot}^*WZ (X_d^*WZ)^\dagger \phi(\Lambda_d)^{-1}\|
        \,.
    \end{align*}
    Finally, by the strong sub-multiplicativity of the unitarily invariant norms
				we can conclude that
    \begin{align*}
        \|\tan(\Theta(\cX,\, \phi(A)(\cV) ))\|
        &\leq
        \|MX_{d,\bot}^*WZ (X_d^*WZ)^\dagger \phi(\Lambda_d)^{-1}\|
        \\
        &\leq
        \|M\|_2 \; \|X_{d,\bot}^*WZ (X_d^*WZ)^\dagger\| \; \|\phi(\Lambda_d)^{-1}\|_2
        \\
        &= \|\phi(\Lambda_{p,\bot})\|_2 \; \|\tan(\Theta(\cX,\, \cH_p ))\| \; \|\phi(\Lambda_d)^{-1}\|_2
    \end{align*} where we have used 		
		Zhu and Knyazev's result from \cite{KZ13} in the last identity,
	since $WZ$ and $X_d$ have orthonormal columns, $R(WZ)=\cH_p$, $R(X_d)=\cX$ and $\rk(X_d^* WZ)=d$	
		(see the discussion at the beginning of Section \ref{subsec angles}). Hence, the subspace $\cH_p$ has the desired properties.
\QED

\pausa
The next technical lemma is needed for the proof of item $1.$ in Proposition \ref{pro estimac poly main}.

\begin{lem}\label{lem la tildef}
    Suppose that $\lambda_1\geq\ldots,\geq\lambda_d>\lambda_{d+1}\geq\ldots\geq\lambda_n= 0$ are non negative real numbers and define $f:\R\setminus\{\lambda_1,\ldots,\lambda_d\}\rightarrow\R $ as
    \[
    f(s) 
    =\left( \min_{1\leq j\leq d} |s-\lambda_j|\right)^{-1}\;
    \max_{d+1\leq j\leq n} |s-\lambda_j|        
    \,.
    \]
    Then, f achieves it's absolute minimum at $s=\lambda_{d+1}/2$ and that minimun value is less than the singular gap $\lambda_{d+1}/\lambda_d<1$.
\end{lem}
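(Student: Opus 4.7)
The plan is to analyze $f$ on the interval $I=[0,\lambda_{d+1}]$, where both numerator and denominator admit transparent closed forms, and then to show that $f$ is strictly larger outside $I$. For $s\in I$, every $\lambda_j$ with $j\leq d$ satisfies $\lambda_j\geq\lambda_d>s$, so $|s-\lambda_j|=\lambda_j-s$ and hence $\min_{j\leq d}|s-\lambda_j|=\lambda_d-s$. On the other hand, the eigenvalues $\lambda_{d+1},\ldots,\lambda_n$ all lie in $I$, and in particular the two endpoints $\lambda_{d+1}$ and $\lambda_n=0$ belong to this index set, so $\max_{d+1\leq j\leq n}|s-\lambda_j|=\max(s,\lambda_{d+1}-s)$. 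Therefore on $I$,
$$ f(s)=\frac{\max(s,\lambda_{d+1}-s)}{\lambda_d-s}. $$

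Splitting $I$ at $s_\star:=\lambda_{d+1}/2$: on $[0,s_\star]$ one has $f(s)=(\lambda_{d+1}-s)/(\lambda_d-s)$ whose derivative has the sign of $\lambda_{d+1}-\lambda_d<0$, so $f$ is strictly decreasing; on $[s_\star,\lambda_{d+1}]$ one has $f(s)=s/(\lambda_d-s)$, with derivative $\lambda_d/(\lambda_d-s)^2>0$, so $f$ is strictly increasing. Hence $f$ attains its unique minimum on $I$ at $s_\star$, with value
$$ f(s_\star)=\frac{\lambda_{d+1}/2}{\lambda_d-\lambda_{d+1}/2}=\frac{\lambda_{d+1}}{2\lambda_d-\lambda_{d+1}}, $$
which is strictly less than $\lambda_{d+1}/\lambda_d$ precisely when $\lambda_d<2\lambda_d-\lambda_{d+1}$, i.e.\ when $\lambda_{d+1}<\lambda_d$, which is our hypothesis. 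Since $\lambda_{d+1}/\lambda_d<1$, this also yields the final claim $f(s_\star)<\lambda_{d+1}/\lambda_d<1$.

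It remains to check that $f(s)>f(s_\star)$ for every $s$ outside $I$ (and away from the excluded poles $\lambda_1,\ldots,\lambda_d$). For $s<0$ the same formulas as on $[0,s_\star]$ apply (the farthest among $\{\lambda_{d+1},\ldots,\lambda_n\}$ from $s$ is still $\lambda_{d+1}$), so $f(s)=(\lambda_{d+1}-s)/(\lambda_d-s)$ is strictly decreasing in $s$, giving $f(s)>\lim_{s\to 0^-}f(s)=\lambda_{d+1}/\lambda_d>f(s_\star)$. For $s>\lambda_{d+1}$ the numerator equals $s$ (attained at $\lambda_n=0$), and using the crude bound $\min_{j\leq d}|s-\lambda_j|\leq|s-\lambda_d|$ one gets $f(s)\geq s/|s-\lambda_d|$; a direct cross-multiplication shows that this exceeds $\lambda_{d+1}/(2\lambda_d-\lambda_{d+1})$ in both sub-cases $\lambda_{d+1}<s<\lambda_d$ (which reduces to $s>\lambda_{d+1}/2$) and $s>\lambda_d$ (which reduces to $2s(\lambda_d-\lambda_{d+1})>-\lambda_d\lambda_{d+1}$, trivially true). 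The only nontrivial point is this last step: for $s>\lambda_d$ the minimum in the denominator is no longer attained at $j=d$, so the proof hinges on choosing a single uniform upper bound for $\min_{j\leq d}|s-\lambda_j|$ that avoids a further case split according to the position of $s$ among $\lambda_1,\ldots,\lambda_{d-1}$.
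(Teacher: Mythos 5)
Your proof is correct and follows essentially the same route as the paper's: identify the explicit rational form of $f$ on each region determined by $0$, $\lambda_{d+1}/2$, $\lambda_{d+1}$, $\lambda_d$, read off the minimum $\frac{\lambda_{d+1}}{2\lambda_d-\lambda_{d+1}}$ at $\lambda_{d+1}/2$, and check by elementary monotonicity or cross-multiplication that $f$ exceeds this value everywhere else (the paper handles $s>\lambda_d$ with the slightly cruder bound $f(s)\ge s/(s-\lambda_d)\ge 1>\lambda_{d+1}/\lambda_d$, but this is only a cosmetic difference). The case split and the key bound $\min_{j\le d}|s-\lambda_j|\le|s-\lambda_d|$ are the same in both arguments.
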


\begin{proof}
    Let $m=\lambda_{d+1}/2$. We first show that $f(m)$  is strictly less than the singular gap:
    \[
    f(m)
    =
\frac{1}{\lambda_d-\left(\frac{\lambda_{d+1}}{2}\right)}\;     \frac{\lambda_{d+1}}{2}
    =
    \frac{\lambda_{d+1}}{\lambda_d+(\lambda_d-\lambda_{d+1})}
    <
    \frac{\lambda_{d+1}}{\lambda_d}
    \]
    Now, to show that this is the minimum value of $f$, we consider the following cases:
    \begin{enumerate}
        \item If $s< m$ then, since $\lambda_{d+1}-\lambda_d<0$ we have that
        \[
        f(s)
        =
        \frac{\lambda_{d+1}-s}{\lambda_d-s}
        =
        1+\frac{\lambda_{d+1}-\lambda_d}{\lambda_d-s}
        >
        1+\frac{\lambda_{d+1}-\lambda_d}{\lambda_d-m}
        =
        \frac{\lambda_{d+1}-m}{\lambda_d-m}
        =
        f(m)\,.
        \]
        
        \item If $s\in (m,\,\lambda_d)$ then,
        \[
        f(s)
        =
        \frac{s}{\lambda_d-s}
        =
        -1+\frac{\lambda_d}{\lambda_d-s}
        >
        -1+\frac{\lambda_d}{\lambda_d-m}
        =
        \frac{m}{\lambda_d-m}
        =
        f(m)\,.
        \]
        
        \item  If $s>\lambda_d$ then, 
        \[
        f(s)
        =
        				\left( \min_{1\leq j\leq d} |s-\lambda_j|\right)^{-1}
				 s \;
                \geq
        \frac{s}{s-\lambda_d}
        \geq 1> \frac{\lambda_{d+1}}{\lambda_d}
        > f(m)\,.
        \]
    \end{enumerate}
\end{proof}

\proof[Proof of item $1.$ in Proposition \ref{pro estimac poly main}] Consider Notation \ref{notac2}. For $s\in \R$ we let
$\phi_s(x)=x-s$ and set
$f:\R\setminus\{\lambda_1,\ldots,\lambda_d\}\rightarrow\R $ as
    \[
    f(s)
    =
     \|\phi_s(\Lambda_d)^{-1}\|_2\ \|\phi_s(\Lambda_{d,\bot})\|_2
    \,.
    \]
In what follows we show that $f$ achieves it's global minimum on $(\lambda_{d+1}+\lambda_n)/2$.

\pausa
    Set $\tilde{\lambda_j}=\lambda_j-\lambda_n\geq 0$ for $1\leq j\leq n$ so that $s-\lambda_j=(s-\lambda_n)-(\lambda_j-\lambda_n) = (s-\lambda_n)-\tilde{\lambda}_j$. Then, 
    \[
    f(s)
    =
    \left( \min_{1\leq j\leq d} |s-\lambda_j|\right)^{-1}\;
		\max_{d+1\leq j\leq n} |s-\lambda_j| 
      =
       \left( \min_{1\leq j\leq d} |(s-\lambda_n)-\tilde{\lambda}_j|\right)^{-1}
			\;  \max_{d+1\leq j\leq n} |(s-\lambda_n)-\tilde{\lambda}_j| 
    \]
    \noindent so $f(s)=\tilde{f}(s-\lambda_n)$ where 
		$\tilde{f}(s)$ is the function considered in Lemma \ref{lem la tildef} for the real non negative numbers $\tilde{\lambda}_1\geq\ldots\geq\tilde{\lambda}_n=0$. Since $\tilde{f}$ achieves it's minimum at $\tilde{\lambda}_{d+1}/2$ we get that $f$ achieves it's minimum at $\tilde{\lambda}_{d+1}/2+\lambda_n=(\lambda_{d+1}+\lambda_n)/2$ as stated. Finally, we estimate
    \[
    f((\lambda_{d+1}+\lambda_n)/2)
    =
    \tilde{f}(\tilde{\lambda}_{d+1}/2)
    =
    \frac{\tilde{\lambda}_{d+1}}{\tilde{\lambda}_d+(\tilde{\lambda}_d-\tilde{\lambda}_{d+1})}
    =
    \frac{\lambda_{d+1}-\lambda_n}{\lambda_d+(\lambda_d-\lambda_{d+1})-\lambda_n}
    <1
    \,.
    \]
\QED

\subsection{Upper bounds for Chebyshev polynomials}\label{subsec cheby}

It is well known that Chebyshev polynomials play an important role in the analysis of block Krylov methods \cite{Drineas,MuscoMusco,WWZ}. Here we will review the definition and a few simple results regarding the Chebyshev polynomials of the first kind. These will allow us to show that they have some separation properties that we can exploit in our favor. Specifically, we will use these polynomials in the proof of Theorem \ref{teo main Kt}.

\begin{fed}
    The Chebyshev polynomial of the first kind of degree $t$, denoted by $T_t(x)\in \R[x]$, is the polynomial defined by the relation
    \begin{equation}\label{eq T_n def}
        T_t(x)
        =
        \cos(t\,\theta)
        \;\text{ when }\;
        x=\cos(\theta)
        \ , \text{ for }\;
        \theta\in [0,\,\pi]\,.
    \end{equation}
    The fact that this defines a polynomial in $x$ follows from the De Moivre's formula, since it proves that $\cos(t\,\theta)$ is a polynomial of degree $t$ in $\cos(\theta)$.
\end{fed}

\pausa
Using the trigonometric relation $\cos(t\theta) + \cos((t-2)\theta) = 2 \cos(\theta) \cos((t - 1)\theta)$ one can obtain the fundamental recurrence relation: $T_0(x)=1$, $T_1(x)=x$ and
\begin{equation}\label{eq T_n recurrencia}
    T_t(x) 
    =
    2\,x\,T_{t-1}(x) - T_{t-2}(x)
    \ , \text{ for } t\geq 2\,.
\end{equation}

\pausa
The following properties of the Chebyshev polynomials are shown in sections 1.4 and 2.4 of \cite{Cheby}. 

\begin{pro}
For $x\geq 1$ the Chebysehv polynomials $T_t(x)$ can be  presented as
\begin{equation}\label{eq T_n forma 3}
    T_t(x)
    =
    \frac{(x+\sqrt{x^2-1})^t+(x-\sqrt{x^2-1})^t}{2}\,.
\end{equation}
\noindent Also, for all $x$ we have the following expression for their derivatives:
\begin{equation}\label{eq T_n derivada}
    T_t'(x)
    =
    \begin{cases}
        2\,t\, (T_{t-1} + T_{t-3} + \ldots + T_{1}) &\text{if } t \text{ is even}\\
        2\,t\, (T_{t-1} + T_{t-3} + \ldots + T_{2})+t &\text{if } t \text{ is odd}
    \end{cases}
\end{equation}
\end{pro}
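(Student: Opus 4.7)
The plan is to establish both identities by exploiting the defining relation $T_t(\cos\theta)=\cos(t\theta)$, extended past the interval $[-1,1]$ for the closed form and differentiated for the derivative formula.

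For the closed form \eqref{eq T_n forma 3}, the first approach I would take is to write $x=\cosh(\varphi)$ with $\varphi\geq 0$ (which is possible precisely because $x\geq 1$). Since $\cosh(\varphi)=\cos(i\varphi)$, analytic continuation of \eqref{eq T_n def} yields $T_t(\cosh(\varphi))=\cos(it\varphi)=\cosh(t\varphi)$. Then $\cosh(t\varphi)=\tfrac{1}{2}(e^{t\varphi}+e^{-t\varphi})$, and one identifies $e^{\varphi}=\cosh(\varphi)+\sinh(\varphi)=x+\sqrt{x^2-1}$ using $\sinh^2(\varphi)=\cosh^2(\varphi)-1=x^2-1$ together with $\varphi\geq 0$; consequently $e^{-\varphi}=x-\sqrt{x^2-1}$ and the formula follows. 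An alternative that avoids analytic continuation is to set $y=x+\sqrt{x^2-1}$ and $z=x-\sqrt{x^2-1}$, observe that $y+z=2x$ and $yz=1$, and then verify that $P_t(x):=\tfrac12(y^t+z^t)$ obeys the recurrence \eqref{eq T_n recurrencia}: indeed
\[
y^t+z^t=(y+z)(y^{t-1}+z^{t-1})-yz(y^{t-2}+z^{t-2})=2x(y^{t-1}+z^{t-1})-(y^{t-2}+z^{t-2}),
\]
and $P_0(x)=1$, $P_1(x)=x$ match the initial conditions, so $P_t(x)=T_t(x)$ for $x\geq 1$ (in fact for all $x$).

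For the derivative formula \eqref{eq T_n derivada}, I would differentiate $T_t(\cos\theta)=\cos(t\theta)$ with respect to $\theta$ to obtain $-\sin(\theta)\,T_t'(\cos\theta)=-t\sin(t\theta)$, hence
\[
T_t'(\cos\theta)=t\,\frac{\sin(t\theta)}{\sin\theta}.
\]
The key step is then to expand $\sin(t\theta)/\sin\theta$ as a sum of cosines via the telescoping identity $2\sin(\theta)\cos(k\theta)=\sin((k+1)\theta)-\sin((k-1)\theta)$. Iterating this for $k=t-1,t-3,\ldots$ down to $k=1$ when $t$ is even, and down to $k=2$ when $t$ is odd (where an extra isolated contribution $\sin(\theta)\cdot 1$ appears from the $k=0$ middle term), produces
\[
\frac{\sin(t\theta)}{\sin\theta}=2\bigl(\cos((t-1)\theta)+\cos((t-3)\theta)+\cdots\bigr)+[t\text{ odd}],
\]
where the bracketed indicator stands for the extra $+1$ in the odd case. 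Multiplying through by $t$ and substituting $x=\cos\theta$ (so that $\cos(k\theta)=T_k(x)$) yields \eqref{eq T_n derivada} exactly.

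The main obstacle is purely combinatorial bookkeeping: keeping track of the parity-dependent index ranges of the telescoping sum and correctly capturing the additive constant $t$ that arises only in the odd case. Once the telescoping identity is applied carefully, both formulas follow without further analytic difficulty.
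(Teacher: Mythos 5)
Your proof is correct. Note, however, that the paper itself offers no proof of this proposition: it simply cites Sections 1.4 and 2.4 of Mason--Handscomb, so your self-contained derivation is genuinely additional content rather than a variant of the paper's argument. Both of your routes to \eqref{eq T_n forma 3} are sound: the hyperbolic substitution $x=\cosh\varphi$ gives the formula directly for $x\geq 1$, while the verification that $P_t(x)=\tfrac12(y^t+z^t)$ satisfies the recurrence \eqref{eq T_n recurrencia} (via $y+z=2x$, $yz=1$ and the power-sum identity) has the small advantage of showing the identity holds as a polynomial identity wherever both sides make sense, not just on $[1,\infty)$. For \eqref{eq T_n derivada}, your telescoping of $2\sin\theta\cos(k\theta)=\sin((k+1)\theta)-\sin((k-1)\theta)$ over $k=t-1,t-3,\ldots$ is exactly right: in the even case the chain ends at $k=1$ and collapses to $\sin(t\theta)$, and in the odd case it ends at $k=2$ leaving the residual $\sin\theta$ that produces the extra $+t$ after multiplying by $t$. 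The only point worth making explicit is that the substitution $x=\cos\theta$ establishes the derivative identity a priori only for $x\in(-1,1)$ (one needs $\sin\theta\neq 0$); since both sides of \eqref{eq T_n derivada} are polynomials, agreement on an interval extends to all $x$, which is the ``for all $x$'' claimed in the statement. With that one-line remark added, the argument is complete.
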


\pausa
Now we will establish a few simple properties of these polynomials.

\begin{pro}
    The polynomials $T_t(x)$ for $t\geq 0$ satisfy the following:
    \begin{enumerate}
        \item $T_t(1)=1$, $|T_t(x)|\leq 1$ when $|x|\leq 1$ and $T_t(x)\geq 0$ when $x\geq 1$.
        \item for $x\geq 1$ we have that $T_{t}(x)\leq 2\,x\,T_{t-1}(x)$ and $T_t'(x)\geq 2\,t\,T_{t-1}(x)\geq 0$. In particular, $T_t$ is monotonically increasing in $[1,\,\infty)$.
        \item for $z\geq y\geq 1$ we have that $T_t'(z)\geq T_t(y)/y$.
        \item $T_t$ exhibits super-linear growth in $[1,\,\infty)$. That is, $\frac{T_t(x)}{x}
        \geq
        \frac{T_t(y)}{y}
        \ , \text{ for }\;
        x\geq y\geq 1$.
        \item $T_t(2)\geq 3^{t}/2$ and for $x\geq 2$ we have that $T_t(x)\geq \frac{1}{4}\, x\, 3^{t}$.
    \end{enumerate}
\end{pro}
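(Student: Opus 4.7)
My plan is to treat each item in order, since later items build on earlier ones, and to rely on the three given formulas: the trigonometric definition, the recurrence $T_t = 2xT_{t-1}-T_{t-2}$, and the closed form $T_t(x)=\tfrac12\bigl((x+\sqrt{x^2-1})^t+(x-\sqrt{x^2-1})^t\bigr)$ valid for $x\geq 1$. The only real choice to make is whether to handle the corner cases $t=0,1$ by hand or by a base case of induction; I will do the former, since items 2--4 really require $t\geq 1$ (for $t=0$, $T_0'=0$ but $T_0(y)/y=1/y>0$).

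For item 1, I would deduce $T_t(1)=1$ and $|T_t(x)|\leq 1$ on $[-1,1]$ directly from the definition $T_t(\cos\theta)=\cos(t\theta)$. For $x\geq 1$, the closed-form expression $T_t(x)=\tfrac12((x+\sqrt{x^2-1})^t+(x-\sqrt{x^2-1})^t)$ displays $T_t(x)$ as an average of two non-negative quantities (since $x\geq\sqrt{x^2-1}$), hence $T_t(x)\geq 0$. For item 2, the inequality $T_t(x)\leq 2xT_{t-1}(x)$ is just the recurrence rearranged, using $T_{t-2}(x)\geq 0$ from item 1. The derivative bound $T_t'(x)\geq 2tT_{t-1}(x)$ follows by dropping all but the leading term in the explicit formula for $T_t'$, again using non-negativity on $[1,\infty)$; combined with item 1 this also gives monotonicity.

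For item 3, I would first note that by item 2, $T_{t-1}$ is non-decreasing on $[1,\infty)$, hence $T_t'(z)\geq 2tT_{t-1}(z)\geq 2tT_{t-1}(y)$ whenever $z\geq y\geq 1$. It then suffices to show $2tyT_{t-1}(y)\geq T_t(y)$ for $y\geq 1$ and $t\geq 1$, which follows from item 2 ($T_t(y)\leq 2yT_{t-1}(y)\leq 2tyT_{t-1}(y)$). For item 4, I would differentiate $T_t(x)/x$, obtaining $(xT_t'(x)-T_t(x))/x^2$; specialising item 3 to $z=y=x$ shows this is non-negative, giving the stated monotonicity.

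For item 5, the bound $T_t(2)\geq 3^t/2$ follows by plugging $x=2$ into the closed form: the first summand is $(2+\sqrt{3})^t\geq 3^t$ and the second is non-negative, so $T_t(2)\geq 3^t/2$. The extension to $x\geq 2$ is then immediate from super-linear growth (item 4): $T_t(x)/x\geq T_t(2)/2\geq 3^t/4$. No step is really difficult here; the main thing to be careful about is the small-$t$ bookkeeping and using items in the right order, since e.g. item 4 depends on item 3, which depends on items 1 and 2.
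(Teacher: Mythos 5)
Your proposal is correct and follows essentially the same route as the paper: items 1 and 2 from the trigonometric definition, the closed form, and the recurrence/derivative formulas; item 3 by the chain $T_t'(z)\geq 2tT_{t-1}(z)\geq 2tT_{t-1}(y)\geq T_t(y)/y$; and item 5 from $(2+\sqrt{3})^t\geq 3^t$ plus item 4. The only (immaterial) deviation is item 4, where you differentiate $T_t(x)/x$ and invoke item 3 at $z=y=x$ while the paper applies the mean value theorem on $[y,x]$; your remark that items 2--5 implicitly require $t\geq 1$ is a fair point the paper glosses over.
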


\begin{proof}
    We prove the items in order. The first two assertions of the first item can be shown using Eq. \eqref{eq T_n def} and for the third assertion one turns to Eq. \eqref{eq T_n forma 3}.

\pausa
  The first assertion of the second item follows from the Eq. \eqref{eq T_n recurrencia} and the previous item. Meanwhile, the second assertion follows from the Eq. \eqref{eq T_n derivada} and the previous item. 

\pausa
    To prove the third item we combine the previous ones in the following chain of inequalities:
    \[
    T_t'(z)
    \geq
    2\,t\, T_{t-1}(z)
    \geq
    2\,t\, T_{t-1}(y)
    =
    2\,y\,T_{t-1}(y) \frac{t}{y}
    \geq
    T_t(y)\frac{t}{y}
    \geq
    \frac{T_t(y)}{y}\,.
    \]
    
\pausa
 The next item is a consequence of the mean value theorem and the previous item. Indeed, given $x\geq y\geq 1$ the mean value theorem assures that there is a value $z\in[y,\,x]$ such that
    \[
    T_t(x)
    =
    T_t(y) + (x-y)T_t'(z)
    \geq
    T_t(y) + (x-y)\frac{T_t(y)}{y}
    =
    x\,\frac{T_t(y)}{y}
    \,.
    \]
    \noindent where the inequality in the middle is a consequence of the previous item.
        Finally, for the last item, we use Eq. \eqref{eq T_n forma 3} to see that
    \[
    T_t(2)
    =
    \frac{(2+\sqrt{3})^t+(2-\sqrt{3})^t}{2}
    \geq
    \frac{3^t}{2}
    \,,
    \]
    and then apply the fourth item to $x\geq 2\geq 1$ for the last estimation.
\end{proof}

\begin{pro}
    For $x\in[1,\,2]$ we have that $T_t(x)\geq 3^{t\,\sqrt{x-1}}/2$.   Therefore, we have that
    \[
    T_t(x)\geq \frac{1}{4}\ x\ 3^{t\cdot\min\{\sqrt{x-1},\,1\}}
    \ ,\text{ for }\;
    x\geq 1
    \,.
    \]
\end{pro}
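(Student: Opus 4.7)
The plan is to prove the inequality $T_t(x) \geq 3^{t\sqrt{x-1}}/2$ on $[1,2]$ via the hyperbolic substitution $x = \cosh\theta$ with $\theta \geq 0$, which turns the Chebyshev polynomial into $T_t(x) = \cosh(t\theta)$. Using the elementary bound $\cosh(t\theta) \geq e^{t\theta}/2$, it suffices to show that $\theta \geq \sqrt{x-1}\,\ln 3$ for $x \in [1,2]$, or equivalently (after taking exponentials and using $T_t(x) \geq e^{t\theta}/2$) that $e^\theta \geq 3^{\sqrt{x-1}}$ throughout this range.

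To verify the inequality $\theta \geq \sqrt{x-1}\,\ln 3$, I would use the identity $\sqrt{x-1} = \sqrt{2}\,\sinh(\theta/2)$ (which follows from $\cosh\theta - 1 = 2\sinh^2(\theta/2)$) and recast the claim as $\psi(\theta) \geq \sqrt{2}\,\ln 3$, where $\psi(\theta) := \theta/\sinh(\theta/2)$ for $\theta > 0$. The key step is showing that $\psi$ is monotonically decreasing on $(0,\infty)$: a direct computation gives
\[
\psi'(\theta) = \frac{\sinh(\theta/2) - (\theta/2)\cosh(\theta/2)}{\sinh^2(\theta/2)},
\]
and the numerator $g(s) := \sinh s - s\cosh s$ (with $s = \theta/2$) satisfies $g(0) = 0$ and $g'(s) = -s\sinh s < 0$ for $s > 0$, so $g(s) < 0$ and hence $\psi' < 0$. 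Consequently, the minimum of $\psi$ on the relevant interval $[0,\mathrm{arccosh}(2)]$ (corresponding to $x \in [1,2]$) is attained at the right endpoint, where $\sinh(\theta/2) = \sqrt{(\cosh\theta - 1)/2} = 1/\sqrt{2}$; this yields $\psi(\mathrm{arccosh}(2)) = \sqrt{2}\,\ln(2+\sqrt{3})$. The proof of the first inequality then reduces to the numerical fact $\ln(2+\sqrt{3}) \geq \ln 3$, i.e.\ $\sqrt{3} \geq 1$, which is immediate.

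For the second inequality $T_t(x) \geq \tfrac{1}{4}\,x\,3^{t\cdot\min\{\sqrt{x-1},1\}}$ for $x \geq 1$, I would split into two cases. For $x \in [1,2]$ the minimum equals $\sqrt{x-1}$, and combining the first part of the proposition with the bound $x \leq 2$ gives $T_t(x) \geq 3^{t\sqrt{x-1}}/2 \geq (x/4)\,3^{t\sqrt{x-1}}$. For $x \geq 2$ the minimum equals $1$, and the required bound $T_t(x) \geq (x/4)\,3^t$ is exactly the last assertion of the preceding proposition. The main obstacle is the monotonicity argument for $\psi$; once this is established the remaining estimates are routine.
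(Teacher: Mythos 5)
Your proof is correct, and for the core estimate on $[1,2]$ it takes a genuinely different route from the paper. Both arguments start from the same place — discarding the second term in $2T_t(x)=(x+\sqrt{x^2-1})^t+(x-\sqrt{x^2-1})^t$, which in your parametrization $x=\cosh\theta$ is exactly the bound $\cosh(t\theta)\geq e^{t\theta}/2$ — so the problem reduces in both cases to showing $x+\sqrt{x^2-1}\geq 3^{\sqrt{x-1}}$ on $[1,2]$. The paper substitutes $u=\sqrt{x-1}$, drops the $u^4$ term under the square root, and verifies $u^2+1+\sqrt2\,u\geq 3^u$ on $[0,1]$ by a derivative computation that uses the chord bound $3^u\leq 1+2u$ and ends with the numerical check $\sqrt2-\ln 3-2(\ln 3-1)\geq 0$. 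You instead write the target as $\theta\geq \sqrt{2}\,\sinh(\theta/2)\ln 3$ via $x-1=2\sinh^2(\theta/2)$, prove that $\psi(\theta)=\theta/\sinh(\theta/2)$ is decreasing (your computation of $\psi'$ and the sign of $g(s)=\sinh s-s\cosh s$ is right), and evaluate at the right endpoint, where everything collapses to $2+\sqrt3\geq 3$. Your version buys a cleaner endgame: the only numerical fact needed is trivial, whereas the paper's requires knowing $\ln 3<(\sqrt2+2)/3$. The only cosmetic gap is the point $x=1$ ($\theta=0$), where $\psi$ is undefined but the claim $T_t(1)=1\geq 1/2$ is immediate. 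Your treatment of the extension to $x\geq 1$ (using $x\leq 2$ on the first range and item 5 of the preceding proposition on $x\geq 2$) is identical to the paper's.
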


\begin{proof}
    To prove the estimation in the interval $[1,\,2]$ We see that, for $x\in [1,\,2]$,
    \[
    2\,T_t(x)
    =
    (x+\sqrt{x^2-1})^t+(x-\sqrt{x^2-1})^t
    \geq
    (x+\sqrt{x^2-1})^t
    \]
    \noindent so it remains to prove that $(x+\sqrt{x^2-1})^t \geq 3^{t\sqrt{x-1}}=(3^{\sqrt{x-1}})^t$. Substituting $u=\sqrt{x-1}$, we need to prove that $ u^2+1+\sqrt{u^4+2u^2} - 3^u$ is non negative for $u\in[0,\,1]$. Dropping the term $u^4$, we instead will see that $f(u)=u^2+1+\sqrt{2}u-3^u$ is non negative in $[0,\,1]$. Since $f(0)=0$ it suffices to check that $f$ has non negative derivative. That is,
    \[
    f'(u)=2u+\sqrt{2}-\ln(3)\,3^u
    \geq 0
    \ , \text{ for }\;
    u\in[0,\,1]
    \,.
    \]

    \noindent Using the convexity of $3^u$ we can estimate 
    \[
    f'(u)
    \geq
    2u+\sqrt{2}-ln(3)\,(2u+1)
    =
    \sqrt{2}-\ln(3)-2\,(\ln(3)-1)\,u
    \geq
    \sqrt{2}-\ln(3)-2\,(\ln(3)-1)
    \geq 0
    \]

    \noindent so the proof for the estimation in the interval $[1,\,2]$ is complete. After that, the global estimation for the interval $[1,\,\infty)$ is achieved by combining the estimation we have just shown with the last item of the previous proposition.
\end{proof}

Our estimations for these polynomials improve slightly a result found in \cite{MuscoMusco} which is applied in the convergence analysis done there, and also in \cite{Drineas, WWZ}. For the sake of comparisons, we state our version of the result found in \cite{MuscoMusco}.

\begin{lem}
    Given  specified values $\alpha> 0$, $\gamma\in (0,\, 1]$,
and $t \geq 1$, there exists a degree $t$ polynomial $p(x)$ such that:
\begin{enumerate}
    \item $p((1+\gamma)\alpha)=(1+\gamma)\alpha$\,,
    \item $p(x)\geq x$ for all $x\geq(1+\gamma)\alpha$\,,
    \item $|p(x)|\leq\frac{(1+\gamma)2\alpha}{3^{t\sqrt{\gamma}}}$ for all $x\in[0,\,\alpha]$\,.
\end{enumerate}
Furthermore, when $t$ is odd, the polynomial only contains odd powered monomials.
\end{lem}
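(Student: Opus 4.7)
The plan is to build $p$ directly from the Chebyshev polynomial $T_t$ by rescaling the variable so that $[0,\alpha]$ becomes the interval $[0,1]$ (where $|T_t|\le 1$) and then normalizing so that the target value at $(1+\gamma)\alpha$ is exactly $(1+\gamma)\alpha$. Concretely, I would set
\[
p(x)\igdef \frac{(1+\gamma)\alpha}{T_t(1+\gamma)}\, T_t\!\left(\frac{x}{\alpha}\right),
\]
which is well-defined since $T_t(1+\gamma)\ge T_t(1)=1>0$ by the monotonicity established in the second item of the first proposition of Section \ref{subsec cheby}. Since $T_t$ has degree $t$, so does $p$; moreover, if $t$ is odd then $T_t$ is an odd polynomial (only odd monomials), and therefore so is $p(x)$, which takes care of the final assertion.

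Property 1 is immediate from the definition: $p((1+\gamma)\alpha)=(1+\gamma)\alpha$. For property 3, note that when $x\in[0,\alpha]$ we have $x/\alpha\in[0,1]$, so $|T_t(x/\alpha)|\le 1$. Using the estimate from the last proposition of Section \ref{subsec cheby} applied to $1+\gamma\in[1,2]$ (which requires $\gamma\in(0,1]$), we get $T_t(1+\gamma)\ge 3^{t\sqrt{\gamma}}/2$, and hence
\[
|p(x)|\le \frac{(1+\gamma)\alpha}{T_t(1+\gamma)}\le \frac{2(1+\gamma)\alpha}{3^{t\sqrt{\gamma}}}.
\]

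Property 2 is the only step that needs real work, and here the super-linear growth of $T_t$ on $[1,\infty)$ (item 4 of the first proposition of Section \ref{subsec cheby}) is the key. Indeed, $p(x)\ge x$ for $x\ge(1+\gamma)\alpha$ is equivalent to
\[
\frac{T_t(x/\alpha)}{x/\alpha}\ \ge\ \frac{T_t(1+\gamma)}{1+\gamma},
\]
after rearranging the definition of $p$. The hypothesis $x\ge(1+\gamma)\alpha$ gives $x/\alpha\ge 1+\gamma\ge 1$, so the super-linear growth inequality applied with the pair $(x/\alpha,\,1+\gamma)$ yields exactly the desired bound.

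There is no real obstacle here: the construction is standard once one sees that the rescaling $x\mapsto x/\alpha$ maps the ``small'' interval onto $[0,1]$ and the ``target'' point into $[1,2]$, so that the two Chebyshev estimates already proven in Section \ref{subsec cheby} (the uniform bound on $[-1,1]$ and the $3^{t\sqrt{\gamma}}/2$ lower bound on $[1,2]$) together with super-linear growth on $[1,\infty)$ exactly produce items 1--3. The parity remark is a formal consequence of the parity of $T_t$.
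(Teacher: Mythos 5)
Your construction is exactly the one the paper uses, namely $p(x)=(1+\gamma)\alpha\,T_t(x/\alpha)/T_t(1+\gamma)$, and your verifications of items 1--3 and of the parity claim correctly invoke the Chebyshev estimates established earlier in that section (the uniform bound on $[0,1]$, the lower bound $T_t(1+\gamma)\ge 3^{t\sqrt{\gamma}}/2$, and super-linear growth on $[1,\infty)$). The proposal is correct and follows the paper's approach, only spelling out the details that the paper leaves implicit.
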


\begin{proof}
    The polynomial $p(x)=(1+\gamma)\alpha\frac{T_t(x/\alpha)}{T_t(1+\gamma)}$ satisfies all the needed conditions.
\end{proof}

\proof[Proof of item $2.$ in Proposition \ref{pro estimac poly main}]
We consider Notation \ref{notac2}, so that $A\in\C^{n\times n}$ is an Hermitian
matrix with eigenvalues $\la_1\geq \ldots\geq \la_n$. Let 
$T_t(x)$ be the Chebyshev polynomial of the first kind of degree $t$, and take the rescaled polynomials 
$\phi_t(x)=T_t((x-\la_n)/(\la_{p+1}-\la_n))$.
Using the previous facts about $T_t(x)$, $\phi_t$ satisfies the following:
\begin{enumerate}
    \item $|\phi_t(\la_j)|\leq 1$, for $p+1\leq j\leq n$. 
    \item $\phi_t(\la_j)\geq \frac{1}{4}\,\frac{\la_j-\la_n}{\la_{p+1}-\la_n}\ 3^{t\cdot \min\left \{\sqrt{\frac{\la_j-\la_n}{\la_{p+1}-\la_n}\,-\,1}\,,\,1\right\}}$,  for $1\leq j\leq p$.
\end{enumerate}
Then, we have that
\[
\|\phi_t(\Lambda_d)^{-1}\|_2 \; \|\phi_t(\Lambda_{p,\perp})\|_2 \ \leq \frac{1}{\phi_t(\lambda_d)} \leq 
4\, \frac{\la_{p+1}-\la_n}{\lambda_d-\la_n} \ 3^{-t\cdot\min\left\{\sqrt{\frac{\la_d-\la_n}{\la_{p+1}-\la_n} \,-\,1} \, ,\,1\right\}} \,.
\]
\QED

{\scriptsize
}

\end{document}